\newtheoremstyle{teoremas}
{12pt}
{13pt}
{\itshape}
{}
{\bfseries}
{}
{.5em}
{}
\theoremstyle{teoremas}
\newtheorem{theorem}{Theorem}[section]
\newtheorem{thm}[theorem]{Theorem}
\newtheorem{corollary}[theorem]{Corollary}
\newtheorem{cor}[theorem]{Corollary}
\newtheorem{proposition}[theorem]{Proposition}
\newtheorem{prop}[theorem]{Proposition}
\numberwithin{equation}{section}
\newtheoremstyle{definition}
{12pt}
{12pt}
{}
{}
{\bfseries}
{}
{.5em}
{}
\theoremstyle{definition}
\newtheorem{definition}[theorem]{Definition}
\newtheorem{defn}[theorem]{Definition}
\newtheorem{conjecture}[theorem]{Conjecture}
\newtheorem{conj}[theorem]{Conjecture}
\newtheorem{problem}[theorem]{Problem}
\newtheorem{Q}[theorem]{Question}
\newtheorem{example}[theorem]{Example}
\newtheorem{ex}[theorem]{Example}
\newtheorem{remark}[theorem]{Remark}
\newtheorem{rem}[theorem]{Remark}
\newcommand{\M}{\mathsf{M}}
\newcommand{\PP}{\operatorname{PP}}
\newcommand{\CPP}{\operatorname{CPP}}
\newcommand{\ShPP}{\operatorname{ShPP}}
\newcommand{\SP}{\mathscr{SP}}
\newcommand{\RW}{\operatorname{RW}}
\DeclareMathOperator{\bsSchu}{\overleftarrow{\mathfrak{S}}}
\DeclareMathOperator{\Schu}{\mathfrak{S}}
\DeclareMathOperator{\cptn}{cptn}
\newcommand{\ehr}{\operatorname{ehr}}
\newcommand{\vol}{\operatorname{vol}}
\renewcommand{\emptyset}{\varnothing}
\def\la{\lambda}
\def\O{\mathcal{O}}
\title{Skew shapes, Ehrhart positivity and beyond}
\author{Luis Ferroni, Alejandro H. Morales, Greta Panova}
\address{(L. Ferroni)
 Universit\`a di Pisa, Pisa, Italy
}
\email{luis.ferroni@unipi.it}
\address{(A. H. Morales)
Universit\'e du Qu\'ebec \`a Montr\'eal, Montr\'eal, Canada}
\email{morales\_borrero.alejandro@uqam.ca}
\address{(G. Panova)
 University of Southern California, Los Angeles (CA), United States
}
\email{gpanova@usc.edu}
\subjclass[2020]{05A17, 05B35, 52B20, 52B40}
\thanks{LF was a member at the Institute for Advanced Study, supported by the Minerva  Research Foundation, and is a members of the INDAM research group GNSAGA – Gruppo Nazionale per le
Strutture Algebriche, Geometriche e le loro Applicazioni. AHM is a member (Spring 2025) of the Institute of Advanced Study, supported by an anonymous donor and is partially supported by an NSF grant DMS-2154019 and an NSERC Discovery grant RGPIN-2024-06246. GP is partially supported by an NSF grant CCF-2302174 and a Simons Fellowship.}
\begin{document}
\allowdisplaybreaks

\begin{abstract}
    A classical result by Kreweras (1965) allows one to compute the number of plane partitions of a given skew shape and bounded parts as certain determinants. We prove that these determinants expand as polynomials with nonnegative coefficients. This result can be reformulated in terms of order polynomials of cell posets of skew shapes, and explains important positivity phenomena about the Ehrhart polynomials of shard polytopes, matroids, and order polytopes. Among other applications, we generalize a positivity statement from Schubert calculus by Fomin and Kirillov (1997) from straight shapes to skew shapes. We show that all shard polytopes are Ehrhart positive and, stronger, that all fence posets, including the zig-zag poset, and all circular fence posets have order polynomials with nonnegative coefficients. We discuss a general method for proving positivity which reduces to showing positivity of the linear terms of the order polynomials. We propose positivity conjectures on other relevant classes of posets.
\end{abstract}

\keywords{Order polynomials, Ehrhart polynomials, Skew plane partitions, Fence posets, Shard polytopes.}

\maketitle


\section{Introduction}

\subsection{Overview}

Let $P$ be a finite partially ordered set. In this article we shall be concerned with one of the most prominent polynomial invariants of $P$, called the \emph{order polynomial}. Throughout, this polynomial will be denoted by $\Omega(P;t)\in \mathbb{Q}[t]$. The order polynomial encodes useful information about the structure of $P$, including the number of linear extensions, the number of antichains, among many other statistics and features of relevance in mathematics. 

One geometric way of viewing the order polynomial of a poset $P$ is through the lens of the Ehrhart theory of polytopes. The \emph{Ehrhart polynomial} of a $d$-dimensional lattice polytope $\mathscr{P}\subseteq \mathbb{R}^n$ is the unique polynomial $\ehr(\mathscr{P},t)\in \mathbb{Q}[t]$ such that
    \[ \ehr(\mathscr{P},t) = \#(t\mathscr{P} \cap \mathbb{Z}^n)\]
for each positive integer $t$. The polynomiality of the map $t\mapsto \ehr(\mathscr{P},t)$ is a foundational result by Ehrhart~\cite{ehrhart}. 

The relationship between order polynomials and Ehrhart polynomials can be made explicit as follows: starting from a poset $P$ on $n$ elements, one can construct the so-called \emph{order polytope} $\mathscr{O}(P)\subseteq\mathbb{R}^n$ (for the precise definition see equation~\eqref{eq:order-polytope} below). Then, the following relation holds:
    \begin{equation} \label{eq:order-ehrhart}
    \ehr(\mathscr{O}(P), t) = \Omega(P;t+1).
    \end{equation}
    
For general lattice polytopes $\mathscr{P}$, it can be proved that the degree of the Ehrhart polynomial is $d=\dim \mathscr{P}$ and that it has constant term equal to $1$. Moreover, writing
    \[ \ehr(\mathscr{P},t) = a_d\, t^d + a_{d-1}\, t^{d-1} + \cdots + a_1 t + 1,\]
then one has that $a_d = \vol(\mathscr{P})$, and $a_{d-1} = \frac{1}{2}\vol(\partial\mathscr{P})$, where $\vol(\mathscr{P})$ stands for the relative volume of the polytope $\mathscr{P}$ with respect to intersection of the lattice $\mathbb{Z}^n$ with the affine span of $\mathscr{P}$ and, similarly, $\vol(\partial\mathscr{P})$ stands for the sum of the relative volumes of the facets of $\mathscr{P}$. In light of this, the two highest degree coefficients and the constant one are nonnegative for all lattice polytopes $\mathscr{P}$. A technical result due to McMullen \cite{mcmullen} provides a family of complicated formulas for the `middle coefficients', i.e., those of degree $2,\ldots, d-2$, in terms of volumes of faces of $\mathscr{P}$. Notwithstanding these formulas, unfortunately the middle coefficients can be negative---and more so, all of them can be negative at the same time \cite{hibi-higashitani-yoshida-tsuchiya}. If the polytope $\mathscr{P}$ satisfies the property that all of the coefficients of $\ehr(\mathscr{P},t)$ are nonnegative, then we say that $\mathscr{P}$ is \emph{Ehrhart positive}. 
For a detailed reading about Ehrhart positivity, we refer to surveys by Liu \cite{liu} and Ferroni--Higashitani \cite{ferroni-higashitani}. 

One of the most challenging endeavors in Ehrhart theory is that of finding classes of polytopes that are Ehrhart positive. As is suggested by equation~\eqref{eq:order-ehrhart}, one may ask similar questions about the sign patterns of order polynomials of posets, noting that if $\Omega(P;t)$ has nonnegative coefficients, then $\mathscr{O}(P)$ is Ehrhart positive, but not conversely. 

The Ehrhart positivity of order polytopes has attracted increasing attention during the past decade. As was noted by Stanley in \cite[Exercise~3.164]{ec1}, there exist posets whose order polynomials have negative coefficients. More strongly, there are order polytopes that fail to be Ehrhart positive. In fact, the results by Liu and Tsuchiya \cite{liu-tsuchiya} and Liu, Xin, Zhang \cite{liu-xin-zhang} imply that if $|P|< 14$ then $\mathscr{O}(P)$ is Ehrhart positive, but for every $n\geq 14$ there exists a poset $P$ on $n$ elements such that $\mathscr{O}(P)$ is not Ehrhart positive. The positivity of the coefficients of $\Omega(P;t)$ is even more rare, in the sense that one can find posets on $5$ elements (for example, an antichain of size $4$ covered by one element) attaining a negative coefficient. 

\subsection{Cell posets of skew shapes}
The central result of this article establishes the nonnegativity of the order polynomials of the cell posets of arbitrary skew shapes. In other words, we prove the following statement.

\newtheorem*{maintheorem}{Main Theorem}

\begin{thm}[Main Theorem~\ref{thm:positivity_skew}]\label{thm:main-positivity_skew}
    Let $\la/\mu$ be a skew shape of size $n$. The coefficients of the order polynomial $\Omega(P_{\la/\mu};t)$ are nonnegative.
\end{thm}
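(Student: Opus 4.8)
The plan is to route the statement through Kreweras's determinantal formula and then prove the determinant expands with nonnegative coefficients by an induction in which the only genuinely new inequality to verify at each step concerns the coefficient of $t$. As a preliminary step I would dispose of disconnected shapes: if $\la/\mu$ decomposes into connected components $D_1,\dots,D_r$, then $P_{\la/\mu}$ is the disjoint union of the cell posets $P_{D_i}$, so $\Omega(P_{\la/\mu};t)=\prod_i\Omega(P_{D_i};t)$ and nonnegativity of the coefficients is preserved under this product; hence it suffices to treat connected $\la/\mu$. I would also record here that the class $\mathcal F$ of all finite disjoint unions of cell posets of skew shapes is closed under passing to order ideals---an order ideal of $P_{\la/\mu}$ is, cell by cell, again such a disjoint union---since this hereditary property is what drives the induction.

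Next I would invoke Kreweras's theorem, which expresses $\Omega(P_{\la/\mu};t)$, the number of reverse plane partitions of shape $\la/\mu$ with entries in $\{1,\dots,t\}$, as a determinant of binomial coefficients in $t$, each of which is individually a polynomial with nonnegative coefficients; the only way the conclusion can fail is through cancellation in the signed expansion. To control this, I would induct on $|P|$ over $P\in\mathcal F$ using the order-ideal recursion
\[
\Omega(P;t+1)-\Omega(P;t)=\sum_{I}\Omega(I;t),
\]
where $I$ runs over the proper order ideals of $P$ (so $I=\emptyset$ contributes $1$), together with the normalization $\Omega(P;0)=0$ for $P\neq\emptyset$. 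The right-hand side has nonnegative coefficients by the inductive hypothesis, and $\Omega(P;t)$ is recovered from it as the antidifference vanishing at $0$; the point of the argument is that if in addition one knows $[t^1]\Omega(Q;t)\ge 0$ for every $Q\in\mathcal F$, then the coefficients of $\Omega(P;t)$ are forced to be nonnegative. This reduces the Main Theorem to a single, structurally simpler assertion: the linear coefficient of $\Omega(P_{\la/\mu};t)$ is nonnegative for every skew shape.

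To establish that, I would extract $[t^1]$ from Kreweras's determinant---equivalently, differentiate it at $t=0$---and reorganize the resulting alternating sum over permutations into a cancellation-free count. The natural tool is the Lindstr\"om--Gessel--Viennot model, in which the determinant is a signed sum over families of lattice paths and the non-intersecting families are exactly the survivors with sign $+1$; the derivative at $0$ should then be expressible through path families with a single controlled crossing, on which one builds a sign-reversing involution. Alternatively one may try to identify $[t^1]\Omega(P_{\la/\mu};t)$ directly with a manifestly nonnegative quantity---a $\zeta$-polynomial value, or a weighted count of chains in the distributive lattice $J(P_{\la/\mu})$---so that positivity needs no involution at all.

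The step I expect to be the main obstacle is precisely this control of the linear coefficient: pulling $[t^1]$ out of a determinant of binomials yields a double alternating sum---over permutations and over which factor contributes the linear term---and making it cancellation-free is where the planarity of the skew-shape poset, via the non-intersecting path model, must be used essentially; it is plausible that one should first fix the number of rows of $\la/\mu$ and run a secondary induction on it. A subtler point that must be checked is the reduction lemma itself: one has to verify that the order-ideal recursion together with vanishing at $0$ really does pin down all coefficients of $\Omega(P;t)$ once the linear coefficients are known throughout $\mathcal F$, and if only the leading coefficients (governed by numbers of linear extensions, hence automatically nonnegative) and finitely many more are determined directly, the induction has to be arranged so that positivity propagates inward from both ends of the coefficient sequence.
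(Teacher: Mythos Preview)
Your overall architecture---reduce to showing $c_1(P)\ge 0$ for every $P$ in a hereditary family, then extract $c_1$ from Kreweras---is exactly the paper's. But the specific reduction lemma you propose is flawed, and you already flag the worry in your last paragraph without resolving it. The recursion
\[
\Omega(P;t+1)-\Omega(P;t)=\sum_{I\subsetneq P}\Omega(I;t)
\]
together with $\Omega(P;0)=0$ and $c_1(P)\ge 0$ does \emph{not} force all coefficients of $\Omega(P;t)$ to be nonnegative. Take the abstract polynomial $f(t)=t^4-t^2$: one has $f(0)=0$, $[t^1]f=0$, and $f(t+1)-f(t)=4t^3+6t^2+2t$ has nonnegative coefficients, yet $[t^2]f=-1$. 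So antidifferencing from a nonnegative forward difference recovers neither the sign of $c_2$ nor of higher middle coefficients; there is no way to ``propagate inward from both ends'' with this recursion alone. The paper's fix is to use the full coproduct identity
\[
\Omega(P;x+y)=\sum_{I\subseteq P}\Omega(I;x)\,\Omega(P\setminus I;y)
\]
and specialize $x=y=t$. Subtracting the $I=\emptyset$ and $I=P$ terms gives $\Omega(P;2t)-2\Omega(P;t)=\sum_{\emptyset\subsetneq I\subsetneq P}\Omega(I;t)\Omega(P\setminus I;t)$, and comparing coefficients of $t^k$ yields $(2^k-2)c_k(P)$ as a sum of products $c_i(I)c_{k-i}(P\setminus I)$ with $1\le i\le k-1$. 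This \emph{does} isolate each $c_k$ for $k\ge 2$ in terms of strictly smaller posets, so induction on $|P|$ plus $c_1\ge 0$ closes. Note this needs the family closed under ideals \emph{and} their complements, which you should state.

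On the linear term itself: you propose an LGV sign-reversing involution or a combinatorial interpretation, but the paper's route is far more elementary and you should try it first. Expand $\det[A_{i,j}(t)]$ over permutations and observe that every above-diagonal entry $A_{i,j}$ with $j\ge i$ is divisible by $t$ (the binomial $\binom{t-1+\la_i-\mu_j}{\la_i-\mu_j-i+j}$ contains the factor $t$ since $\la_i>\mu_j$). Hence a permutation $w$ contributes to $[t^1]$ only if it has \emph{exactly one} index $i$ with $w(i)\ge i$; this forces $w(1)=\ell$ and $w(i)=i-1$ for $i\ge 2$. A single term survives and evaluates to $(\la_1-\mu_\ell-1)!\,(\ell-1)!/(\la_1-\mu_\ell-1+\ell)!>0$ for connected shapes. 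No involution is needed.
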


We now explain the notation in the preceding theorem. Consider any skew shape $\lambda/\mu$ represented as a skew Young diagram. The poset $P_{\la/\mu}$ is the poset whose elements are the boxes $X(\la/\mu) = \{ (i,j): i \in [\ell], j \in [\mu_i+1,\la_i]\}$ and covering relations given by $(i,j) \succeq (i+1,j)$, $(i,j) \succeq (i,j+1)$, see Figure~\ref{fig:example-poset}.

\begin{figure}[ht]
    \includegraphics[scale=0.85]{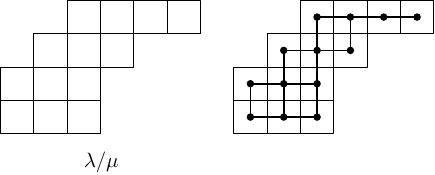}
 \quad  
 \raisebox{20pt}{
 \begin{tikzpicture}[scale=0.4,auto=center,every node/.style={circle,scale=0.7, fill=black, inner sep=2.7pt}] 
	\tikzstyle{edges} = [thick];
    \node (A1) at (6,3) {};
    \node (A2) at (4,3) {};
    \node (A3) at (2,3) {};

    \node (B1) at (7,2) {};
    \node (B2) at (5,2) {};
    \node (B3) at (3,2) {};
    \node (B4) at (1,2) {};
    
    \node (C1) at (8,1) {};
    \node (C2) at (6,1) {};
    \node (C3) at (4,1) {};
    \node (C4) at (2,1) {};

    \node (D1) at (9,0) {};
    \node (D2) at (3,0) {};

    \draw (A1) -- (B1);
    \draw (B1) -- (C1);
    \draw (C1) -- (D1);
    \draw (A2) -- (B2);
    \draw (B2) -- (C2);
    \draw (A3) -- (B3);
    \draw (B3) -- (C3);
    \draw (B4) -- (C4);
    \draw (C4) -- (D2);

    \draw (B1) -- (C2);
    \draw (A1) -- (B2);
    \draw (B2) -- (C3);
    \draw (C3) -- (D2);
    \draw (A2) -- (B3);
    \draw (B3) -- (C4);
    \draw (A3) -- (B4);
\end{tikzpicture}}
\quad 
\includegraphics[scale=0.85]{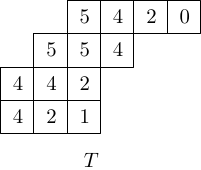}
    \caption{The skew shape $\lambda/\mu=6533/21$, its cell poset $P_{\lambda/\mu}$, and a plane partition with entries in $\{0,\ldots,5\}$.}
    \label{fig:example-poset}
\end{figure}

\subsection{Skew plane partitions and Kreweras determinants}

The order polynomial of a skew shape $\lambda/\mu$ enumerates plane partitions of shape $\lambda/\mu$ with bounded parts. A famous determinantal formula due to Kreweras~\cite{Kreweras_1965} for skew shapes and to MacMahon for straight shapes \cite{MacMahon} implies that
    \[ \Omega(P_{\lambda/\mu},t) =  \det\left[ \binom{t-1 + \lambda_i-\mu_j }{\lambda_i-\mu_j-i+j}
\right]_{i,j=1}^{\ell}\]
where $\ell$ denotes the length of $\lambda/\mu$. 

The content of Theorem~\ref{thm:main-positivity_skew} can be recast so to say that Kreweras determinants expand, in the standard basis $\{t^i\}_{i \geq 0}$, as polynomials with nonnegative coefficients. In the case of straight shapes, the nonnegativity of this expansion was proved by Fomin and Kirillov \cite[Theorem~2.1]{FK} in the context of Schubert calculus.

Moreover, one may reformulate Theorem~\ref{thm:main-positivity_skew} as a positivity phenomenon for the multichain enumerator on intervals of a Young's lattice. That is, our result asserts that the so-called \emph{zeta polynomial} of an arbitrary interval in Young's lattice has nonnegative coefficients (cf. \cite[Exercise~3.149]{ec1}).

\subsection{Fence posets and zig-zag posets} 

When the skew shape $\lambda/\mu$ is a ribbon, i.e., when its Young diagram does not contain a $2\times 2$ square, the cell poset $P_{\lambda/\mu}$ is commonly referred to as a \emph{fence poset}; see Figure~\ref{fig:fence poset} for an example. 

\begin{figure}[ht]
    \begin{tikzpicture}[scale=0.5,auto=center,every node/.style={circle,scale=0.7, fill=black, inner sep=2.7pt}] 
	\tikzstyle{edges} = [thick];
    \node (A1) at (3,0) {};

    \node (B1) at (2,1) {};
    \node (B2) at (4,1) {};
    \node (B3) at (6,1) {};
    
    \node (C1) at (1,2) {};
    \node (C3) at (5,2) {};
    \node (C4) at (7,2) {};

    \node (D1) at (0,3) {};
    \node (D2) at (8,3) {};

    \draw (D1) -- (C1);
    \draw (C1) -- (B1);
    \draw (B1) -- (A1);
    \draw (A1) -- (B2);
    \draw (B2) -- (C3);
    \draw (C3) -- (B3);
    \draw (B3) -- (C4);
    \draw (C4) -- (D2);
    
\end{tikzpicture}
\caption{Example of a fence poset.}
\label{fig:fence poset}
\end{figure}
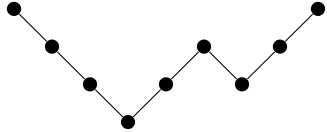

Fence posets have received considerable attention in the last few years, due to their relevance in the study of quiver representations, cluster algebras, and valuations in matroid theory. A relevant problem that garnered much interest was the unimodality conjecture due to Morier-Genoud and Ovsienko \cite{morier-genoud-ovsienko} for the rank polynomials of the lattices of ideals of fence posets. That conjecture has been settled by Kantarc{\i}\ O{\u g}uz and Ravichandran \cite{oguz-ravichandran}. For order polynomials, we obtain the following corollary of Theorem~\ref{thm:main-positivity_skew}.

\begin{corollary}[Corollary~\ref{coro:fences-order-positive}]\label{coro:fences-order-positive-main}
    Let $P$ be a fence poset. Then $\Omega(P;t)$ has nonnegative coefficients.
\end{corollary}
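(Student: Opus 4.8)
The plan is to derive Corollary~\ref{coro:fences-order-positive-main} directly from the Main Theorem~\ref{thm:main-positivity_skew} by recognizing fence posets as cell posets of a special family of skew shapes. First I would recall that a fence poset $P$ is, by its combinatorial description, an alternating sequence of up-steps and down-steps; concretely, $P$ is the Hasse diagram of a path graph with edges oriented so that the partial order consists of chains $x_1 < x_2 > x_3 < x_4 > \cdots$ (or the reverse), with the segment lengths being arbitrary positive integers. My first step is to exhibit, for any such $P$, an explicit ribbon (border strip) skew shape $\la/\mu$—one whose Young diagram contains no $2\times 2$ square—such that $P_{\la/\mu}\cong P$. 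This is a standard fact: ribbons are in bijection with compositions, and the cell poset of a ribbon is exactly the fence obtained by reading its horizontal and vertical strips; I would phrase this by building $\la$ and $\mu$ from the sequence of ``runs'' of the fence, taking care to match the covering relations $(i,j)\succeq(i+1,j)$ and $(i,j)\succeq(i,j+1)$ with the up/down pattern. A small figure (as in Figure~\ref{fig:fence poset}) makes the correspondence transparent.

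Once this identification is in place, the corollary is immediate: by the Main Theorem, $\Omega(P_{\la/\mu};t)$ has nonnegative coefficients for every skew shape $\la/\mu$, and in particular for the ribbon shape constructed above, so $\Omega(P;t)=\Omega(P_{\la/\mu};t)$ has nonnegative coefficients. One should also observe that the zig-zag poset (the alternating fence on $n$ elements with all runs of length one) is a special case, recovered from the ribbon shape that is a single ``staircase'' border strip; this handles the parenthetical claim in the abstract about the zig-zag poset with no extra work.

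The only genuine content to check carefully is the ribbon-to-fence dictionary: namely that \emph{every} fence poset—i.e., cell poset of \emph{some} ribbon—arises this way, with the correct orientation of covers, and conversely that ribbons are precisely the skew shapes with no $2\times2$ square. The paper has already asserted this identification in the sentence introducing fence posets (``the cell poset $P_{\lambda/\mu}$ is commonly referred to as a \emph{fence poset}''), so the step is essentially definitional, but I would still spell out the bijection between compositions and ribbons and verify the covering relations against the definition of $P_{\la/\mu}$ given after the Main Theorem. I do not anticipate a real obstacle here—the proof is a one-line deduction once the reduction is stated—so the ``hard part'' is purely expository: choosing notation for the ribbon associated to a fence that is clean enough to make the isomorphism $P\cong P_{\la/\mu}$ self-evident rather than requiring a tedious index chase. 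The substantive mathematics lives entirely in the Main Theorem, which this corollary merely specializes.
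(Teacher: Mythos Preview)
Your proposal is correct and matches the paper's approach exactly: the paper simply states that specializing Theorem~\ref{thm:main-positivity_skew} to the case where $\lambda/\mu$ is a ribbon shape yields the corollary, since fence posets are by definition the cell posets of ribbons. There is nothing more to it than the one-line deduction you describe.
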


The above result was one of the main motivations to prove Theorem~\ref{thm:main-positivity_skew}. The first author raised this question\footnote{See Problem 1.2 here \url{http://aimpl.org/ehrhartineq/1/}} in a workshop at the American Institute of Mathematics in May of 2022 and, since then, various independent attempts of proving this statement have been made by several groups of researchers. Two special cases deserve an explicit mention. First, the case of fences with only two `arms', i.e., when $\mu=\varnothing$ and $\lambda$ is a hook: despite appearances, this simple case is already a fairly difficult problem. The nonnegativity of the order polynomial in that special case is one of the main results by Ferroni in \cite{ferroni_hooks}.
Second, the case of fence posets whose Hasse diagram is a zig-zag (see Figure~\ref{fig:zig-zag} below): the order polynomials of these posets have attracted considerable attention, e.g., in \cite{kirillov,chen-zhang,coons-sullivant,petersen-zhuang,lundstrom-saud}, even though the nonnegativity of the coefficients remained open, while now follows as a special case.

\subsection{Shard polytopes and Matroids}
Corollary~\ref{coro:fences-order-positive-main}  has implications at the level of a new fascinating class of polytopes called \emph{shard polytopes}, introduced in recent work by Padrol, Pilaud, and Ritter~\cite{padrol-pilaud-ritter}. 

\begin{corollary}[Corollary~\ref{coro:shards}]
    Shard polytopes of type A are Ehrhart positive.
\end{corollary}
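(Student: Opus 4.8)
The plan is to show that every type-A shard polytope is integrally equivalent to the order polytope of a fence poset, and then to read off Ehrhart positivity from Corollary~\ref{coro:fences-order-positive-main} via the identity~\eqref{eq:order-ehrhart}. Recall from Padrol--Pilaud--Ritter~\cite{padrol-pilaud-ritter} that a shard $\Sigma$ of the braid arrangement of type $A_{n-1}$ is encoded by an arc $\alpha = (a,b,A,B)$ with $1 \le a < b \le n$ and $A \sqcup B = \{a+1,\dots,b-1\}$, and that the shard polytope $\SP(\Sigma) \subseteq \mathbb{R}^n$ is the convex hull of an explicit family of $0/1$ vectors associated with the subarcs of $\alpha$. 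The first step is a purely combinatorial translation: recording, from left to right, whether $\alpha$ passes above an intermediate point (that point lying in $A$) or below it (it lying in $B$) produces an ascent--descent word and hence a ribbon; let $F_\Sigma$ be the cell poset of that ribbon, which is a fence poset.

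The substantive step is to construct an integral equivalence $\SP(\Sigma) \cong \mathscr{O}(F_\Sigma)$. I would exhibit a unimodular linear map — of ``partial sum'' type, so that its matrix is triangular with $\pm 1$ on the diagonal — identifying $\operatorname{aff}\SP(\Sigma)$, together with its lattice $\mathbb{Z}^n \cap \operatorname{aff}\SP(\Sigma)$, with $\mathbb{R}^{F_\Sigma}$ and $\mathbb{Z}^{F_\Sigma}$, and check that under this map the $0/1$ vertices of $\SP(\Sigma)$ coming from subarcs of $\alpha$ match the indicator vectors of order filters of $F_\Sigma$, i.e.\ the vertices of $\mathscr{O}(F_\Sigma)$. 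Being unimodular, the map induces bijections $t\,\SP(\Sigma) \cap \mathbb{Z}^n \leftrightarrow t\,\mathscr{O}(F_\Sigma) \cap \mathbb{Z}^{F_\Sigma}$ for all $t \ge 1$, whence $\ehr(\SP(\Sigma),t) = \ehr(\mathscr{O}(F_\Sigma),t) = \Omega(F_\Sigma;t+1)$ by~\eqref{eq:order-ehrhart}; if more convenient one may instead match $\SP(\Sigma)$ with the chain polytope of $F_\Sigma$, which has the same Ehrhart polynomial. Finally, Corollary~\ref{coro:fences-order-positive-main} lets us write $\Omega(F_\Sigma;t) = \sum_k c_k t^k$ with every $c_k \ge 0$, and then $\Omega(F_\Sigma;t+1) = \sum_k c_k(t+1)^k$ again has nonnegative coefficients, so $\SP(\Sigma)$ is Ehrhart positive.

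The main obstacle is expected to be making the dictionary of the second step airtight rather than any conceptual hurdle: one has to decide how the two endpoints $a,b$ of the arc enter the fence and what that does to the ambient dimension, dispose of the low-dimensional degenerate arcs (such as $A = B = \varnothing$) by inspection, and — the genuinely delicate point — verify that the coordinate change is unimodular with respect to $\mathbb{Z}^n \cap \operatorname{aff}\SP(\Sigma)$, not merely a combinatorial isomorphism of face lattices, since only integral equivalence transports the Ehrhart polynomial. It is also worth confirming at the outset that the subarcs of $\alpha$ correspond bijectively to the order filters (equivalently, in the chain-polytope model, the antichains) of $F_\Sigma$, so that no lattice point is created or destroyed and the equivalence is genuine.
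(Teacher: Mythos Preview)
Your proposal is correct and follows essentially the same route as the paper: both arguments reduce Ehrhart positivity of a type-A shard polytope to the positivity of $\Omega(F;t)$ for a fence $F$ via an integral equivalence $\SP(\alpha)\cong\mathscr{O}(F)$, and then invoke Corollary~\ref{coro:fences-order-positive-main} together with~\eqref{eq:order-ehrhart}. The only difference is that the paper outsources the integral equivalence to the literature---identifying shard polytopes with base polytopes of snake matroids via \cite{knauer-martinez-ramirez0,padrol-pilaud-ritter} and those in turn with order polytopes of fences via \cite{benedetti-knauer-valencia} (or, alternatively, citing \cite{LACIM_g_vector} directly)---whereas you propose to build the unimodular ``partial-sum'' map by hand; the caveats you flag (endpoints, degenerate arcs, unimodularity on the affine lattice) are exactly the bookkeeping that these references carry out.
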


An influential conjecture due to De Loera, Haws, and K\"oppe \cite{deloera-haws-koppe}, disproved by the first author in \cite{ferroni}, postulated that matroid polytopes were Ehrhart positive. Shard polytopes of type A are a special class of matroids, whose polytopes in fact are integrally equivalent to the order polytope of a fence poset. More so, as we shall explain, shard polytopes are exactly the base polytopes of \emph{snake matroids}. In particular, we have the following consequence.

\begin{corollary}\label{coro:ehrhart-snake}
    Let $\M$ be a matroid. If $\M$ is a direct sum of loops, coloops, and snake matroids, then its base polytope is Ehrhart positive.
\end{corollary}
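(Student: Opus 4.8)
The plan is to deduce this corollary formally from the two facts already in hand: that the base polytope of a snake matroid is integrally equivalent to the order polytope $\mathscr{O}(P)$ of a fence poset, and that fence posets have order polynomials with nonnegative coefficients (Corollary~\ref{coro:fences-order-positive}), together with the compatibility of Ehrhart polynomials with matroid direct sums. First I would recall that if $\M = \M_1 \oplus \cdots \oplus \M_k$, then the base polytope $\mathscr{P}_{\M}$ is lattice-isomorphic to the Cartesian product $\mathscr{P}_{\M_1} \times \cdots \times \mathscr{P}_{\M_k}$, each factor living in its own block of coordinates. Dilating and intersecting with the lattice then factors as a product, so for every positive integer $t$
\[
\ehr(\mathscr{P}_{\M},t) \;=\; \prod_{i=1}^{k} \ehr(\mathscr{P}_{\M_i},t).
\]
A loop and a coloop each have a base polytope equal to a single lattice point, hence Ehrhart polynomial the constant $1$. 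Since a product of polynomials with nonnegative coefficients again has nonnegative coefficients, it suffices to prove that the Ehrhart polynomial of a single snake matroid has nonnegative coefficients.

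For that, I would invoke the identification of snake matroids with (type A) shard polytopes: the base polytope of a snake matroid is integrally equivalent to the order polytope $\mathscr{O}(P)$ of some fence poset $P$, and integral equivalence preserves Ehrhart polynomials. By~\eqref{eq:order-ehrhart} we have $\ehr(\mathscr{O}(P),t) = \Omega(P;t+1)$, and Corollary~\ref{coro:fences-order-positive} tells us that $\Omega(P;t)$ has nonnegative coefficients. The substitution $t \mapsto t+1$ preserves this property, since each $(t+1)^j$ expands with nonnegative binomial coefficients; hence $\ehr(\mathscr{O}(P),t)$, and therefore the base polytope of every snake matroid, is Ehrhart positive. (This is of course just a restatement of Corollary~\ref{coro:shards}.)

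Putting the pieces together: if $\M$ is a direct sum of loops, coloops, and snake matroids, the displayed product formula writes $\ehr(\mathscr{P}_{\M},t)$ as a product of factors each of which has nonnegative coefficients — the constant $1$ for the loop and coloop summands, and $\Omega(P_i;t+1)$ for the snake summands — so $\ehr(\mathscr{P}_{\M},t)$ itself has nonnegative coefficients, i.e.\ $\mathscr{P}_{\M}$ is Ehrhart positive. The only steps needing any care are standard and not genuine obstacles: the factorization of the base polytope of a matroid direct sum as a product, and the invariance of the Ehrhart polynomial under integral equivalence. Once Corollary~\ref{coro:fences-order-positive} is available, this corollary is essentially bookkeeping.
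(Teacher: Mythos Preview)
Your argument is correct and matches the paper's approach: the paper deduces this corollary from the integral equivalence between base polytopes of snake matroids and order polytopes of fence posets (stated as \cite[Theorem~3.3]{benedetti-knauer-valencia} in the text), together with Corollary~\ref{coro:fences-order-positive}, and the direct-sum/product factorization you spell out is exactly the standard bookkeeping the paper leaves implicit. There is nothing to add.
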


\newtheorem*{metatheorem}{Meta Theorem}

\subsection{Classes of posets with positive order polynomials}
In order to prove Theorem~\ref{thm:main-positivity_skew} we demonstrate a general strategy to establish the nonnegativity of coefficients of order polynomials. This strategy can be summarized via the following statement. 

\begin{metatheorem}[Theorem~\ref{thm:meta_positivity}]\label{meta-theorem}
     Let $\mathcal{F}$ be a family of posets closed under taking lower and upper ideals, such that for every $P \in \mathcal{F}$ we have $c_1(P) := [t^1] \,\Omega(P;t) \geq 0$. Then $|P|!\cdot \Omega(P;t)$ has nonnegative integer coefficients for every $P\in \mathcal{F}$.
\end{metatheorem}

Many classes of posets appearing in combinatorics display the stability property required by the preceding statement, i.e., they are closed under taking upper and lower ideals. Hence, the nonnegativity of the coefficients of the order polynomial can be reduced to a single coefficient: the linear one.  

It is tempting to apply a similar strategy on some classes like planar posets, trees/forests, or even binary trees; but via small examples they can be ruled out very quickly; see Section~\ref{ss:nonexamples}. Nonetheless, based on evidence collected by computer experiments and proofs for specific examples, we conjecture that this general procedure may lead to the positivity of the order polynomial of other classes of posets.

The classes appearing in the conjecture below are easily seen to be closed under the operations of taking upper and lower order ideals, so it would suffice to establish the nonnegativity of the linear term.

\begin{conjecture}\label{conj:triple-conjecture}
    The following classes of posets have order polynomials with nonnegative coefficients.
    \begin{enumerate}[(i)]
        \item Posets of width two (see Conjecture~\ref{conj:width 2}).
        \item Cell posets of cylindric skew shapes (see Conjecture~\ref{conj:cylindric}).
        \item Cell posets of shifted skew shapes (see Conjecture~\ref{conj:shifted}).
    \end{enumerate}
\end{conjecture}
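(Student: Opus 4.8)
The plan is to reduce every part of Conjecture~\ref{conj:triple-conjecture} to the Meta Theorem and then to establish, family by family, the single inequality it demands. Each of the three classes is closed under passing to lower and upper order ideals: an ideal of the cell poset of a cylindric (resp.\ shifted) skew shape is again the cell poset of a cylindric (resp.\ shifted) skew shape, and an ideal of a width-two poset has width at most two. So Theorem~\ref{thm:meta_positivity} reduces the conjecture to showing that $c_1(P) = [t^1]\,\Omega(P;t) \geq 0$ for all $P$ in each family. Here one may use the standard closed form, a consequence of $P$-partition theory,
\[
c_1(P) \;=\; \frac{1}{n}\sum_{k=0}^{n-1} \frac{(-1)^k\, \beta_k(P)}{\binom{n-1}{k}}, \qquad n=|P|,
\]
where $\beta_k(P)$ counts the linear extensions of $P$ with $k$ descents (equivalently, the entries of the $h^*$-vector of $\mathscr{O}(P)$); applying the Beta integral $\binom{n-1}{k}^{-1} = n\int_0^1 x^k(1-x)^{n-1-k}\,dx$ term by term turns this into $c_1(P) = \int_0^1 \sum_k \beta_k(P)\,(-x)^k (1-x)^{n-1-k}\,dx$, so in each case it suffices to bound this quantity below by zero.

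For part~(i), by Dilworth's theorem a width-two poset is a union of two chains, its linear extensions are the shuffles of those two chains compatible with the cross-relations, and the descent statistic translates into a corner count for the corresponding lattice paths in a staircase region. The approach would be to describe $\sum_k \beta_k(P)\,x^k$ through a transfer matrix read off from the interleaving pattern of the two chains, and then to show that the integral formula above evaluates to a manifestly nonnegative expression---for instance a nonnegative combination of products of binomial coefficients, or the output of a sign-reversing involution on pairs of shuffles. I expect this to be the hardest case: the zig-zag poset is already the width-two instance forced by Corollary~\ref{coro:fences-order-positive-main}, and even it resisted several earlier attacks, so a genuinely new idea will be needed rather than an adaptation of the skew-shape argument.

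For parts~(ii) and~(iii) the first step is to find the exact formula for $\Omega(P;t)$ that plays the role of the Kreweras determinant. For cylindric skew shapes one anticipates a cylindric analogue---a determinant of binomials carrying a built-in periodicity, or a signed sum of Kreweras-type determinants over the relevant affine translates---produced by counting families of non-intersecting lattice paths on a cylinder, in the spirit of the enumeration of cylindric partitions; for shifted skew shapes one would seek a Pfaffian (or determinantal) formula for bounded shifted plane partitions. With such a formula in hand one would follow the template of the proof of Theorem~\ref{thm:main-positivity_skew}: extract the linear coefficient from the (signed) expansion and show it is nonnegative. The main obstacle, common to both parts, is that the cyclic symmetry of cylindric shapes and the diagonal symmetry of shifted shapes break the non-intersecting-path bijections that make the ordinary skew case work---on a cylinder paths may wind around, so the Lindström--Gessel--Viennot-type expansion genuinely carries signs and one must show the cancellation still leaves a polynomial with nonnegative linear coefficient, while in the shifted case the extra doubled relations along the diagonal similarly obstruct a direct path model.
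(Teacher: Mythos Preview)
The statement you are attempting to prove is labeled a \emph{conjecture} in the paper, and the paper does not contain a proof of it; it offers only supporting evidence (computer verification in small cases, the circular-fence theorem for part~(ii)) and a discussion of possible approaches. So there is no ``paper's own proof'' to compare against, and your proposal should be read as a research plan rather than a proof.

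As a plan, your reduction via Theorem~\ref{thm:meta_positivity} to the inequality $c_1(P)\geq 0$ is exactly the route the paper itself advocates: all three classes are closed under ideals, so only the linear term needs to be controlled. Beyond that, however, your write-up does not supply the missing arguments; you say explicitly that for part~(i) ``a genuinely new idea will be needed'' and for parts~(ii)--(iii) you only describe what formula ``one anticipates'' or ``would seek.'' These are honest assessments, but they mean the proposal is not a proof.

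Two concrete remarks. First, for cylindric skew shapes the determinantal formula you hope for already exists: it is the Gessel--Krattenthaler identity (Theorem~\ref{thm:GesselKrattCylPP}), a signed sum of Kreweras-type determinants over affine translates. The paper uses it to extract $c_1$ and prove nonnegativity in the ribbon (circular fence) case, but for general cylindric shapes the sum over tuples $(k_1,\dots,k_\ell)$ contributes many determinants and the cancellation is not understood; that, not the existence of the formula, is the obstruction. For shifted skew shapes the paper notes that only a \emph{straight}-shape Pfaffian (Hopkins--Lai) is available, and explicitly flags that a skew analogue would be needed---so your ``seek a Pfaffian'' step is itself an open problem.

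Second, a factual slip: the zig-zag poset $Z_n$ is \emph{not} a width-two poset; its maximal elements form an antichain of size roughly $n/2$. The width-two analogue discussed in the paper is the \emph{complement} zig-zag $\overline{Z}_n$, whose order-polynomial positivity is verified only computationally. So Corollary~\ref{coro:fences-order-positive-main} does not settle any instance of part~(i), and your remark that the zig-zag ``is already the width-two instance forced'' by it is incorrect.
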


A poset has width two if the size of a maximal antichain is equal to $2$. A detailed definition and description for the cell posets of cylindric and shifted skew shapes is provided in Section~\ref{sec:other}.

For width two posets, we have verified by an exhaustive computation that this is true whenever the length of the maximal chains in $P$ is at most $5$.

In order to provide additional support to the conjecture on cylindric skew shapes we also prove that it holds for any circular fence poset, i.e., a fence poset in which we add a comparability relation between the `first' and the `last' element, has a nonnegative order polynomial. Like the classical fences mentioned earlier, circular fences also play a relevant role in the study of cluster algebras.

\begin{theorem}[Theorem~\ref{thm:circular-fences-order-positive}]
    Let $P$ be a circular fence poset. Then $\Omega(P;t)$ has nonnegative coefficients.
\end{theorem}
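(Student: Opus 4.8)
The plan is to deduce the theorem from the Meta Theorem (Theorem~\ref{meta-theorem}). Consider the family
\[
\mathcal{F} \;=\; \{\text{finite disjoint unions of fence posets}\}\ \cup\ \{\text{circular fence posets}\}.
\]
This family is closed under taking lower and upper order ideals: a proper lower ideal of a circular fence $P$ is the complement of a nonempty filter, every nonempty filter contains a maximal element $v$ (necessarily a peak of $P$), and deleting $v$ cuts the Hasse cycle into a path, so the ideal is a lower ideal of a fence poset and hence a disjoint union of fence posets; lower ideals of disjoint unions of fences are again of that kind, and the claims for upper ideals follow by order reversal, under which both fence posets and circular fence posets are preserved. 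Moreover, by Corollary~\ref{coro:fences-order-positive-main} together with the multiplicativity $\Omega(P\sqcup Q;t)=\Omega(P;t)\,\Omega(Q;t)$, every member of $\mathcal{F}$ that is a disjoint union of fences has an order polynomial with nonnegative coefficients, and in particular a nonnegative linear coefficient. By the Meta Theorem it thus suffices to prove that $c_1(P)=[t^1]\,\Omega(P;t)\ge 0$ for every circular fence poset $P$.

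For the linear coefficient I would expand $\Omega(P;t)=\sum_{k\ge 1}a_k(P)\binom{t}{k}$, where $a_k(P)$ is the number of surjective order-preserving maps $P\to[k]$ --- equivalently, the number of chains $\emptyset=I_0\subsetneq I_1\subsetneq\cdots\subsetneq I_k=P$ of order ideals of $P$ --- and use $[t^1]\binom{t}{k}=(-1)^{k-1}/k$ to get
\[
\begin{aligned}
c_1(P)&=\sum_{k\ge 1}\frac{(-1)^{k-1}}{k}\,a_k(P)\\
&=\frac1n\sum_{i=0}^{n-1}\frac{(-1)^i h^*_i}{\binom{n-1}{i}}\\
&=\int_0^\infty\frac{h^*_P(-s)}{(1+s)^{n+1}}\,ds,
\end{aligned}
\]
where $n=|P|$ and $h^*_P(z)=\sum_i h^*_i z^i$ is the $h^*$-polynomial of the order polytope $\mathscr{O}(P)$. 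The task then is to show this alternating quantity is nonnegative for a circular fence. To compute it I would use that $\Omega(P;t)$ equals the trace of a product $U^{a_1}L^{b_1}\cdots U^{a_p}L^{b_p}$ of the $t\times t$ weak-increase and weak-decrease matrices prescribed by the cyclic arm lengths $a_1,b_1,\dots,a_p,b_p$ of $P$ --- a cylindric counterpart of the Kreweras determinant, reflecting that a circular fence is a cylindric ribbon --- which produces an explicit Lindstr\"om--Gessel--Viennot expression for $\Omega(P;t)$, and hence for the $a_k(P)$ and for $c_1(P)$, in terms of the arm lengths. I would also record the elementary identity obtained by cutting the Hasse cycle at a peak $v$ with neighbours $u,w$, namely $\Omega(P;t)=\#\{g\colon G'\to[t]\text{ order-preserving}:g(p)=g(q)\}$, where $G'$ is the ordinary fence poset on $n+1$ elements obtained from $P\setminus\{v\}$ by adjoining new maximal elements $p$ above $u$ and $q$ above $w$; this reduces several auxiliary quantities to ordinary fences, for which Corollary~\ref{coro:fences-order-positive-main} applies.

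The decisive and hardest step is to establish the nonnegativity of the resulting expression for $c_1(P)$: the trace formula, its Lindstr\"om--Gessel--Viennot expansion, and the inclusion--exclusion arguments underlying the comparison with $G'$ all contribute terms of mixed sign, so the nonnegativity of the individual summands is by itself inconclusive. I would attack this by induction on the number $p$ of arm pairs of the circular fence, with the established positivity for ordinary fences (Corollary~\ref{coro:fences-order-positive-main}) serving both as the base of the induction and as the input fed in at each step; should this prove too delicate, a more computational route is to evaluate the alternating sum $\sum_{k}(-1)^{k-1}a_k(P)/k$ directly from the explicit formula --- for instance by analysing the single integral $\int_0^\infty h^*_P(-s)(1+s)^{-n-1}\,ds$, splitting the range of integration or exhibiting a convenient antiderivative so as to certify its sign. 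The very short cycles and the most lopsided arm patterns, where the cyclic word of up- and down-steps is as unbalanced as a valid poset permits, would be treated separately as base cases, and throughout one must ensure that the cut is performed at a genuine peak, so that $P\setminus\{v\}$ is honestly a linear fence poset.
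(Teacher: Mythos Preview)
Your reduction via the Meta Theorem is correct and matches the paper: the family $\mathcal{F}$ you choose is closed under ideals, and for disjoint unions of fences the linear coefficient is nonnegative by Corollary~\ref{coro:fences-order-positive-main}. So everything hinges on $c_1(P)\ge 0$ for a circular fence $P$.

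This is precisely where your proposal has a genuine gap. You write down several correct reformulations of $c_1(P)$ --- the alternating sum $\sum_k(-1)^{k-1}a_k(P)/k$, the $h^*$-integral, a transfer-matrix trace, a cutting identity --- but you do not carry any of them through to a proof of nonnegativity. You yourself flag this as ``the decisive and hardest step'' and then only sketch possible lines of attack (induction on the number of arm pairs, sign analysis of the integral) without executing them. None of these routes is obviously workable: the transfer-matrix and inclusion--exclusion expressions have terms of mixed sign, as you note, and there is no evident mechanism producing the needed cancellation. At this stage the argument is a plan, not a proof.

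The paper closes this gap by a direct computation using the Gessel--Krattenthaler determinantal formula for cylindric plane partitions (Theorem~\ref{thm:GesselKrattCylPP}). Specialized to a cylindric ribbon, that formula collapses to a finite sum of $\ell\times\ell$ determinants (Proposition~\ref{prop:GK for border strips}), and one can extract the linear term of each determinant exactly as in the proof of Proposition~\ref{prop:skew_coeff_1}: in each summand only a single permutation contributes to $[t^1]$, and it contributes $\frac{(\lambda_1-1)!(\ell-1)!}{(\lambda_1-1+\ell)!}$. Summing over the $\ell$ determinants gives $c_1(P)=\ell\cdot\frac{(\lambda_1-1)!(\ell-1)!}{(\lambda_1-1+\ell)!}>0$. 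Your vague reference to a ``cylindric counterpart of the Kreweras determinant'' is pointing in the right direction, but the concrete identification of the surviving permutation and the resulting explicit value of $c_1$ are the substance of the argument, and they are absent from your proposal.
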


The analog of Kreweras formula that we employ in this setting is another determinantal formula for cylindric skew shapes by Gessel and Krattenthaler \cite{GesselKrattenthaler} (see Theorem~\ref{thm:GesselKrattCylPP}).

\subsection*{Acknowledgments}

The authors thank the American Institute of Mathematics, where they first met, and the Institute for Advanced Study, where this project was carried out, for the support provided. The authors also acknowledge many fruitful conversations about fence posets and Ehrhart theory with the participants of the AIM Workshop on Ehrhart polynomials held in May 2022. We thank Dave Anderson and Tianyi Yu for insightful conversations about Schubert polynomials and connections to plane partitions. We also thank Per Alexandersson, Swee Hong Chan, Sam Hopkins, and Hugh Thomas for helpful comments and suggestions. We are grateful to Haimu Wang for calculations that led to Conjecture~\ref{conj:width 2}, and Leonardo Saud-Maia-Leite for useful discussions about the circular zig-zag poset. We also thank the anonymous referees for their helpful comments and suggestions.

\section{Background}

We will assume that the reader is acquainted with the fundamental notions about partially ordered sets. However, below we recapitulate some of the essential concepts in this topic, especially those relevant for the content of this paper. We refer to \cite{ec1} for the undefined terminology and further reading.

\subsection{Order polynomials and order polytopes}
Let $P=(X,\preceq)$ be a partially ordered set and let $|X|=n$. 
Let $\mathcal{O}(P;S)$ be the set of order preserving maps $f:X \to S$. 
The \emph{order polynomial} of $P$ is the unique polynomial $\Omega(P;t)\in \mathbb{Q}[t]$ such that
\[\Omega(P;t):=\# \{ f:X \to \{1,\ldots,t\}: f(x) \leq f(y) \text{ if } x \preceq y\}=\#\mathcal{O}(P;\{1,\ldots,t\}),\]
for each positive integer $t$. The fact that this counting function is indeed a polynomial is a classical result~\cite[Section~3.12]{ec1}; one way to see this directly is through the formula
\begin{align}\label{eq:omega_binom}
    \Omega(P;t) = \sum_{k=1}^n \sum_{ \emptyset=I_0 \subsetneq \cdots \subsetneq I_k=P } \binom{t}{k},
\end{align}
where the sum ranges over all chains of ideals of $P$. If $P=\emptyset$ then we set $\Omega(P;t)=1$.
Note that when $P$ is not the empty poset, we have that $\Omega(P;0)=0$.

The \emph{order polytope} of $P$, introduced by Stanley in \cite{stanley-polytopes}, is defined by 
    \begin{equation}\label{eq:order-polytope}
        \mathscr{O}(P) := \{ f \in [0,1]^P : f(u) \leq f(v) \text{ for all $u\preceq v$ in $P$}\}.
    \end{equation}
This is a subpolytope of the unit cube $[0,1]^P$ whose vertices are in bijection with the order ideals of the poset. In particular, it is a lattice polytope so we may consider its Ehrhart polynomial $\ehr(\mathscr{O}(P),t)$, as defined in the introduction. The identity in equation~\eqref{eq:order-ehrhart} implies that both the order polynomial and the Ehrhart polynomial have the same leading coefficient, which in turn is equal to the relative volume of the polytope $\mathscr{O}(P)$. In turn, a result of Stanley in \cite{stanley-polytopes} implies that this equals (up to the normalization by $|P|!$) the number of linear extensions of $P$. A similar interpretation for the second highest coefficient of $\Omega(P;t)$ can be provided, see~\cite[Exercise~3.163]{ec1}.

\begin{example}\label{ex:faulhaber}
    Consider the poset $P_n$ arising from an antichain of size $n$ covering a single minimum element. For example, Figure~\ref{fig:faulhaber} depicts this poset for $n=4$. The order polytope $\mathscr{O}(P)$ is a pyramid over an $n$-dimensional cube. The order polynomial can be computed for a positive integer $t$ as follows:
        \[ \Omega(P_n; t) = \sum_{j=1}^{t} j^n,\]
    see, e.g., \cite[Example~2.8]{ferroni-higashitani}. The polynomial $\Omega(P_n;t)$ is known in the literature as the $n$-th \emph{Faulhaber's polynomial}.
    When $n = 4$, this is
        \[ \Omega(P_4;t) = \tfrac{1}{5} t^{5} + \tfrac{1}{2} t^{4} + \tfrac{1}{3} t^{3} - \tfrac{1}{30} t\]
    which has a negative linear term. The smallest $n$ such that $\ehr(\mathscr{O}(P_n),t) = \Omega(P_n;t+1)$ has negative coefficients is $n=20$, see \cite{liu-tsuchiya}.
    
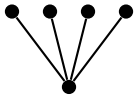
\begin{figure}[ht]
    \centering
	\begin{tikzpicture}  
	[scale=0.5,auto=center,every node/.style={circle,scale=0.7, fill=black, inner sep=2.7pt}] 
	\tikzstyle{edges} = [thick];
	
	\node[] (a1) at (0,0) {};  
	\node[] (a2) at (-1.5,2) {};  
	\node[] (a3) at (-0.5,2) {};  
        \node[] (a4) at (0.5,2) {};  
	\node[] (a5) at (1.5,2) {};

	\draw[edges] (a1) -- (a2);  
	\draw[edges] (a1) -- (a3);  
	\draw[edges] (a1) -- (a4);  
	\draw[edges] (a1) -- (a5); 
	\end{tikzpicture}\caption{One minimum element, covered by $4$ elements.}\label{fig:faulhaber}
\end{figure}
\end{example}

\subsection{Plane partitions and Kreweras determinants}

A \emph{plane partition} $T$ is a filling of the shape $\la/\mu$ with nonnegative integers that is weakly decreasing along rows and weakly decreasing along columns. See Figure~\ref{fig:example-poset}. It is immediate to verify that every plane partition corresponds an order preserving map $T:X \to \mathbb{Z}_{\geq 0}$.
We set $\PP_{\la/\mu}(t) := \#\mathcal{O}(P_{\lambda/\mu};\{0,\ldots,t\})$, so 
\begin{equation} \label{eq:omega of lam/mu is PP}
\Omega(P_{\la/\mu};t) = \PP_{\la/\mu}(t-1).
\end{equation}
Note that this is also the \emph{zeta polynomial} of the interval $[\mu,\lambda]$ in Young's lattice, see \cite[Ex.~3.149]{ec1}.

In \cite{Kreweras_1965}, Kreweras, gave the following determinantal identity for $\PP_{\la/\mu}(t)$. The case of straight shapes is due to MacMahon \cite{MacMahon}.

\begin{prop}[Kreweras \cite{Kreweras_1965}]\label{Kreweras_form}
Let $\PP_{\lambda/\mu}(t)$ be the number of plane partitions of shape $\lambda/\mu$ with entries less than or equal to $t$. We have that 
\begin{equation} \label{eq:krew det}
\PP_{\lambda/\mu}(t)  = \det\left[ \binom{\lambda_i-\mu_j +t}{\lambda_i-\mu_j-i+j}
\right]_{i,j=1}^{\ell}
\end{equation}
\end{prop}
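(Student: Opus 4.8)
The plan is to deduce the identity from the Lindström--Gessel--Viennot (LGV) lemma, by encoding plane partitions of shape $\lambda/\mu$ with entries at most $t$ as families of non-intersecting lattice paths. Fix $t$, and recall that such a plane partition $T$ is determined by its rows, the $i$-th of which is a weakly decreasing sequence $T(i,\mu_i+1)\ge\cdots\ge T(i,\lambda_i)$ of integers in $\{0,1,\dots,t\}$. Using the classical bijection between bounded weakly decreasing sequences and monotone (north/east) lattice paths inside a box, I would attach to row $i$ a monotone path $\pi_i$ from a source $A_i$ to a sink $B_i$, with the coordinates chosen so that the diagonal shifts built into the $A_i$ and $B_i$ turn the column constraints $T(i,j)\ge T(i+1,j)$ into the assertion that $\pi_i$ and $\pi_{i+1}$ share no vertex. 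The correct choice is $A_j=(\mu_j-j,\,j)$ and $B_i=(\lambda_i-i,\,t+i)$ for $1\le i,j\le\ell$; one then checks that $T\mapsto(\pi_1,\dots,\pi_\ell)$ is a bijection between plane partitions of shape $\lambda/\mu$ with entries at most $t$ and families of pairwise vertex-disjoint monotone paths in which $\pi_i$ runs from $A_i$ to $B_i$.

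With this in hand, the single-path count is immediate: a monotone north/east path from $A_j$ to $B_i$ has horizontal displacement $\lambda_i-\mu_j-i+j$ and vertical displacement $t+i-j$, so there are exactly $\binom{\lambda_i-\mu_j+t}{\lambda_i-\mu_j-i+j}$ of them, with the usual convention that this binomial coefficient vanishes when its lower index is negative or larger than the upper one (which is precisely when no such path exists). Since $\lambda$ and $\mu$ are partitions, both $(A_j)_j$ and $(B_i)_i$ are strictly decreasing in the first coordinate and strictly increasing in the second as the index increases, and a short planar argument shows that for every non-identity $\sigma\in S_\ell$ any path system with $\pi_i\colon A_{\sigma(i)}\to B_i$ must contain a crossing; hence only the identity permutation contributes to the LGV expansion. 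The lemma then gives
\[
\PP_{\lambda/\mu}(t)=\det\!\Big[\#\{\text{monotone paths }A_j\to B_i\}\Big]_{i,j=1}^{\ell}=\det\!\left[\binom{\lambda_i-\mu_j+t}{\lambda_i-\mu_j-i+j}\right]_{i,j=1}^{\ell},
\]
which is the claimed formula.

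The main obstacle is not conceptual but a matter of getting the bookkeeping exactly right: pinning down the coordinates of $A_i$ and $B_i$ (equivalently, the precise diagonal shift applied row by row) so that the weak column inequalities become honest vertex-disjointness rather than mere non-crossing, and confirming that this remains a bijection for genuinely skew---and possibly disconnected---shapes, where several of the binomial entries are zero. Once the encoding is fixed, the single-path count and the ``only the identity permutation survives'' step are routine. One could alternatively simply cite the classical work of MacMahon \cite{MacMahon} and Kreweras \cite{Kreweras_1965}, or argue by induction on the number of boxes of $\lambda/\mu$ via a Laplace expansion of the determinant, but the LGV argument is the most transparent.
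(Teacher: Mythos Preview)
The paper does not actually prove this proposition: it is stated as a classical result and attributed to Kreweras (with the straight-shape case to MacMahon), with no argument given beyond the citation. Your LGV derivation is therefore not being compared against any proof in the paper, but it is the standard modern proof of this formula and is correct in outline; the choice $A_j=(\mu_j-j,j)$, $B_i=(\lambda_i-i,t+i)$ gives exactly the right binomial entries, and the strict monotonicity of the first coordinates of the $A_j$ and $B_i$ forces the identity permutation to be the only one admitting non-intersecting systems.

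One small comment on presentation: you phrase the conclusion as ``only the identity permutation contributes to the LGV expansion,'' but strictly speaking LGV always yields the full signed determinant; what the planar compatibility of your sources and sinks buys you is that the left-hand side of the lemma (the signed count of non-intersecting systems over all permutations) reduces to the unsigned count for the identity pairing, i.e., to $\PP_{\lambda/\mu}(t)$. Also, when you verify that column weak inequalities translate to vertex-disjointness (rather than merely edge-disjointness), it is worth spelling out that the shift by $i$ in both coordinates is what converts the weak inequality $T(i,j)\ge T(i+1,j)$ into a strict separation of the paths. These are exactly the bookkeeping points you already flag, so there is no gap---just details to fill in if you want the proof to be self-contained.
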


\section{Order polynomials}

The goal in this section is to establish a general framework to prove the nonnegativity of order polynomials. As we will see, for certain classes of posets, this task may be reduced to just proving the positivity of \emph{one} coefficient. 

The first ingredient is the following elementary ``coproduct formula'' for order polynomials. This is well known to the experts and can be found, e.g., in \cite[Ex.~1.2.4(d)]{kung-yan} or \cite[Theorem~2.6]{shareshian2003newapproachorderpolynomials}. We include a short self-contained proof for completeness.

\begin{prop}\label{prop:omega_x_y}
    For variables $x,y$ we have the following polynomial identity
    $$\Omega(P;x+y) = \sum_{I \subset P} \Omega(I;x)\Omega(P \setminus I;y),$$
    where the sum goes over all ideals $I$ of $P$ (including $\emptyset$ and $P$). 
\end{prop}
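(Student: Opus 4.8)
The plan is to prove the identity first for all pairs of positive integers $(x,y)$; since both sides are polynomials in $x$ and $y$ and a polynomial in two variables vanishing on all of $\mathbb{Z}_{>0}^2$ is identically zero, this is enough. So I would fix positive integers $m$ and $n$ and view $[1,m+n]$ as the disjoint union of a ``low'' block $L=\{1,\ldots,m\}$ and a ``high'' block $H=\{m+1,\ldots,m+n\}$. The goal is then to exhibit a bijection between $\mathcal{O}(P;[1,m+n])$ and the disjoint union, over all order ideals $I$ of $P$, of $\mathcal{O}(I;[1,m])\times\mathcal{O}(P\setminus I;[1,n])$, where $I$ and $P\setminus I$ carry the induced partial orders.

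In one direction, given an order-preserving map $f\colon P\to[1,m+n]$, set $I:=f^{-1}(L)$. Then $I$ is an order ideal: if $y\in I$ and $x\preceq y$, then $f(x)\le f(y)\le m$, so $x\in I$. Restricting $f$ yields $f|_I\in\mathcal{O}(I;[1,m])$, and subtracting $m$ from $f|_{P\setminus I}$ yields an element of $\mathcal{O}(P\setminus I;[1,n])$. Conversely, given an ideal $I$ together with maps $g\in\mathcal{O}(I;[1,m])$ and $h\in\mathcal{O}(P\setminus I;[1,n])$, define $f$ on $P$ by $f=g$ on $I$ and $f=h+m$ on $P\setminus I$. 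This $f$ is order-preserving: each piece is order-preserving on its own; if $x\preceq y$ with $x\in I$ and $y\in P\setminus I$ then $f(x)\le m<f(y)$; and there is \emph{no} relation $x\preceq y$ with $x\in P\setminus I$ and $y\in I$, precisely because $I$ is a down-set. These two constructions are mutually inverse, so
\[ \Omega(P;m+n)=\sum_{I}\Omega(I;m)\,\Omega(P\setminus I;n), \]
the sum over all order ideals $I$ of $P$, with the extreme cases $I=\emptyset$ and $I=P$ absorbed by the convention $\Omega(\emptyset;t)=1$. Since this holds for every $(m,n)\in\mathbb{Z}_{>0}^2$, it holds as a polynomial identity in $x$ and $y$.

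There is essentially no serious obstacle. The only point requiring care is the orientation of the cut: the ideal must receive the \emph{small} values and its complement the \emph{large} ones, and it is exactly the down-set property of $I$ that ensures no covering relation points ``backwards'' across the cut, so that arbitrary order-preserving maps on the two induced subposets can be glued with no compatibility condition between them.
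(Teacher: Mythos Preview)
Your proof is correct and follows essentially the same approach as the paper: both reduce to integer values and set up the bijection $f\mapsto (I,f|_I,f|_{P\setminus I}-m)$ with $I=f^{-1}\{1,\ldots,m\}$. If anything, you are more careful than the paper in verifying the inverse map and explaining why the down-set condition on $I$ is exactly what makes the gluing work.
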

\begin{proof}
    As both sides are polynomials in $x$ and $y$ it is enough to prove the identity when $x,y \in \mathbb{Z}_{\geq 0}$. It is done via the following bijection $\phi: \O(P;[x+y]) \to \bigcup_{I} \O(I;[x]) \times \O(P \setminus I;[y])$:
    for $f \in \O(P;[x+y])$ let $I= \{ a \in X: f(a) \leq x\}$, it is easy to see that $I$ is an order ideal. Then set $f|_I \in\O(I;[x])$ with $ f_I(a)=f(a)$, and $f|_{P\setminus I}(a) = f(a)-x \in[y]$, so $f|_{P\setminus I} \in \O(P\setminus I)$. Then set $\phi(f) = (f|_I , f|_{P \setminus I})$ with $I$ determined above. 
\end{proof}

\begin{remark}
    The zeta polynomial of a poset $P$ is a polynomial $Z(P;t)\in \mathbb{Q}[t]$ that enumerates multichains of elements in $P$ of size $t$. Defining the polynomial $Z([u,v];t)$ as the zeta polynomial of the interval $[u,v]\subseteq P$, it is easy to see that \[Z([u,w]; x+y) = \sum_{u\preceq v\preceq w} Z([u,v];x)Z([v,w];y).\] Now, it is a standard fact in poset theory that the order polynomial of a poset $P$ equals the zeta polynomial of its lattice of order ideals $J(P)$ (e.g., see \cite[Section 3.12]{ec1}). One can use this to derive another proof of the last proposition.
\end{remark}

\begin{prop}\label{prop:omega_coeffs}
    For any non-empty poset $P$, let $\Omega(P;t) = \sum_{k=1}^n c_k(P) t^k$ expand with coefficients $c_k(P)$. Then for every $k\geq 2$ we have that 
    $$c_k(P) = \frac{1}{2^k-2} \left( \sum_{I \subsetneq P, I\neq \emptyset} \sum_{i=1}^{k-1} c_i(I) c_{k-i}(P \setminus I) \right),$$
    where the sum goes over all nonempty ideals $I\neq P$.
\end{prop}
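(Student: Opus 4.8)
The plan is to specialize the coproduct formula of Proposition~\ref{prop:omega_x_y} to $x=y=t$ and then compare coefficients of $t^k$ on both sides. Setting $x=y=t$ yields
\[ \Omega(P;2t) = \sum_{I\subseteq P} \Omega(I;t)\,\Omega(P\setminus I;t),\]
where the sum ranges over all ideals $I$ of $P$, including $I=\emptyset$ and $I=P$.

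The next step is to isolate the two extreme terms. Since $\Omega(\emptyset;t)=1$, the term $I=\emptyset$ contributes $\Omega(P;t)$ and the term $I=P$ contributes $\Omega(P;t)$ as well, so together they give $2\,\Omega(P;t)$. Hence
\[ \Omega(P;2t) - 2\,\Omega(P;t) = \sum_{\emptyset\ne I\subsetneq P} \Omega(I;t)\,\Omega(P\setminus I;t).\]

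Now I extract the coefficient of $t^k$ from each side. On the left, $[t^k]\,\Omega(P;2t) = 2^k c_k(P)$, so the left-hand side has $t^k$-coefficient $(2^k-2)\,c_k(P)$. On the right, every nonempty proper ideal $I$ gives a nonempty poset $I$ with nonempty complement $P\setminus I$; since the order polynomial of a nonempty poset vanishes at $0$ (as noted after equation~\eqref{eq:omega_binom}), both $\Omega(I;t)$ and $\Omega(P\setminus I;t)$ have zero constant term, so we may write $\Omega(I;t)=\sum_{i\ge1}c_i(I)t^i$ and $\Omega(P\setminus I;t)=\sum_{j\ge1}c_j(P\setminus I)t^j$. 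Multiplying, for $k\ge 2$ the $t^k$-coefficient of $\Omega(I;t)\Omega(P\setminus I;t)$ is exactly the convolution $\sum_{i=1}^{k-1}c_i(I)c_{k-i}(P\setminus I)$. Equating the two sides and dividing by $2^k-2\neq 0$ gives the stated formula.

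There is no genuine obstacle in this argument; the only points that require a little care are correctly accounting for the boundary terms $I=\emptyset$ and $I=P$ in the coproduct formula, and observing that the nonemptiness of $I$ and $P\setminus I$ forces the relevant order polynomials to have no constant term, so that the inner convolution indeed runs from $i=1$ to $i=k-1$. For $k=1$ the same comparison produces the trivial identity $2c_1(P)=2c_1(P)$, which is consistent with the hypothesis $k\ge 2$.
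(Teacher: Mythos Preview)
Your proof is correct and follows essentially the same approach as the paper: specialize the coproduct identity to $x=y=t$, move the boundary terms $I=\emptyset$ and $I=P$ to the left-hand side, and compare coefficients of $t^k$. If anything, you are slightly more explicit than the paper in justifying why the inner sum runs only over $i=1,\ldots,k-1$.
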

\begin{proof}
    Apply Proposition~\ref{prop:omega_x_y} with $x=y=t$, we have that
    $$\Omega(P;2t) = \sum_{I \subset P} \Omega(I;t)\Omega(P \setminus I;t).$$
    Subtracting the terms with $I=\emptyset$ and $I=P$ we have
    $$\Omega(P;2t) - 2\Omega(P;t) = \sum_{I \subsetneq P; I \neq \emptyset} \Omega(I;t)\Omega(P\setminus I;t).$$
Finally, expand both sides as polynomials in $t$ and compare coefficients at $t^k$, noting that
\[\Omega(P;2t)-2\Omega(P;t) = \sum_{k=1}^n c_k(P)2^kt^k - 2c_k(P)t^k=\sum_{k=2}^n (2^k-2)c_k(P) t^k.\qedhere\]
\end{proof}

Now let $\mathcal{F}$ be a family of posets closed under taking lower and upper ideals, i.e., if $P \in \mathcal{F}$ and $I \subset P$ is an order ideal then $I, P \setminus I \in \mathcal{F}$. Assume $\emptyset \in \mathcal{F}$ and necessarily $\{1\} \in \mathcal{F}$ (poset of 1 element). 

\begin{thm}\label{thm:meta_positivity}
    Let $\mathcal{F}$ be a family of posets closed under taking lower and upper ideals, such that for every $P \in \mathcal{F}$ we have $c_1(P) := [t^1] \, \Omega(P;t) \geq 0$. Then $c_k(P) \geq 0$ for every $k \geq 1$. If $n=|P|$, then $n!\Omega(P;t)$ has nonnegative integer coefficients.
\end{thm}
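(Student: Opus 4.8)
The plan is to prove the two assertions of Theorem~\ref{thm:meta_positivity} by strong induction on $n=|P|$, using Proposition~\ref{prop:omega_coeffs} as the engine. The base cases $n=0$ and $n=1$ are immediate: $\Omega(\emptyset;t)=1$ has no positive-degree terms to worry about, and $\Omega(\{1\};t)=t$ has $c_1=1\geq 0$. So fix $P\in\mathcal{F}$ with $|P|=n\geq 2$ and assume the statement holds for every poset in $\mathcal{F}$ of size strictly less than $n$.

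For the nonnegativity of the $c_k(P)$, the coefficient $c_1(P)$ is nonnegative by hypothesis, so it remains to handle $k\geq 2$. Here I invoke the formula
\[
c_k(P) = \frac{1}{2^k-2}\left(\sum_{\emptyset\neq I\subsetneq P}\ \sum_{i=1}^{k-1} c_i(I)\,c_{k-i}(P\setminus I)\right).
\]
For each nonempty proper ideal $I$ of $P$, both $I$ and $P\setminus I$ lie in $\mathcal{F}$ (because $\mathcal{F}$ is closed under taking lower and upper ideals) and each has size strictly between $0$ and $n$. By the inductive hypothesis, all coefficients $c_i(I)$ and $c_{k-i}(P\setminus I)$ appearing in the sum are nonnegative, and since $2^k-2>0$ for $k\geq 2$, we conclude $c_k(P)\geq 0$. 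This settles the first claim.

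For the integrality of $n!\,\Omega(P;t)$, I would argue that $n!\,c_k(P)\in\mathbb{Z}_{\geq 0}$ for all $k$. Again this is by induction on $n$; the nonnegativity is already known, so only integrality is at issue. The cleanest route is to sharpen the inductive hypothesis to: for every $Q\in\mathcal{F}$ with $|Q|=m$, one has $m!\,c_k(Q)\in\mathbb{Z}$ for all $k$. Now given $P$ with $|P|=n$, take any $\emptyset\neq I\subsetneq P$, write $|I|=a$ and $|P\setminus I|=n-a$ with $1\leq a\leq n-1$. For a term $c_i(I)c_{k-i}(P\setminus I)$ we have $a!\,c_i(I)\in\mathbb{Z}$ and $(n-a)!\,c_{k-i}(P\setminus I)\in\mathbb{Z}$ by induction, so $a!\,(n-a)!\,c_i(I)c_{k-i}(P\setminus I)\in\mathbb{Z}$, hence $n!\,c_i(I)c_{k-i}(P\setminus I) = \binom{n}{a}\cdot a!\,(n-a)!\,c_i(I)c_{k-i}(P\setminus I)\in\mathbb{Z}$. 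Summing over $I$ and $i$ gives that $n!$ times the parenthesized sum is an integer; but I still need to divide by $2^k-2$, which is not obviously harmless.

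The main obstacle, then, is precisely the factor $2^k-2$: showing that $(2^k-2)$ divides the integer $n!\,(\text{sum})$, or more robustly, that $n!\,c_k(P)\in\mathbb{Z}$ directly. I expect the intended resolution is to sidestep Proposition~\ref{prop:omega_coeffs} for the integrality part and instead use the chain-of-ideals formula~\eqref{eq:omega_binom}: since $\binom{t}{k}=\frac{1}{k!}\,t(t-1)\cdots(t-k+1)$ has coefficients in $\frac{1}{k!}\mathbb{Z}$, and $k\leq n$ always, each $\binom{t}{k}$ contributes coefficients in $\frac{1}{n!}\mathbb{Z}$; summing finitely many such terms (the number of chains of ideals being an integer) keeps the coefficients in $\frac{1}{n!}\mathbb{Z}$, i.e. $n!\,\Omega(P;t)\in\mathbb{Z}[t]$. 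Combined with $c_k(P)\geq 0$ from the induction above, this yields $n!\,\Omega(P;t)\in\mathbb{Z}_{\geq 0}[t]$, completing the proof. I would present the nonnegativity via the inductive argument on Proposition~\ref{prop:omega_coeffs} and the integrality via~\eqref{eq:omega_binom}, since entangling the two through the $2^k-2$ denominator is the fragile point and is cleanly avoided this way.
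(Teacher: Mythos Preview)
Your proof is correct and matches the paper's approach essentially line for line: strong induction on $|P|$ using Proposition~\ref{prop:omega_coeffs} for the nonnegativity of $c_k(P)$ with $k\geq 2$, and the chain-of-ideals formula~\eqref{eq:omega_binom} for the integrality of $n!\,\Omega(P;t)$. Your instinct to abandon the attempt to extract integrality from the $2^k-2$ recursion is exactly right---the paper does not go that route either, and simply observes that $n!\binom{t}{k}\in\mathbb{Z}[t]$ for $k\leq n$.
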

\begin{proof}
    The proof follows by strong induction on $n$, the size of the posets in the family. 
    We have $\Omega(\emptyset;t)=1$ and $\Omega(\{1\};t) =t$, both in $\mathbb{Z}_{\geq 0}[t]$.
    Suppose now that the coefficients of the order polynomial of every $P \in \mathcal{F}$ with $|P|\leq n-1$ are nonnegative. Let $P\in \mathcal{F}$ be a poset with $n$ elements.
    By assumption we have that $c_0(P)=0$ and $c_1(P) \geq 0$. For $k\geq 2$ we invoke the formula from Proposition~\ref{prop:omega_coeffs}, noting that $c_i(I) \geq 0$ and $c_{k-i}(P \setminus I) \geq 0$ since $I, P\setminus I \in \mathcal{F}$ and have at most $n-1$ elements each. Thus $c_k(P) \geq 0$ and this finishes the induction.

    Finally, observe that $n! \cdot \Omega(P;t) \in \mathbb{Z}[t]$ for every poset $P$. This can be easily seen from formula~\eqref{eq:omega_binom} since  $n! \cdot \binom{t}{k} = n(n-1)\cdots(k+1) t(t-1)\cdots(t-k+1) \in \mathbb{Z}[t]$. 
\end{proof}

\section{Positivity of skew plane partitions}\label{sec:pp}

Let $\mathcal{F}$ be the family of all posets $P_{\la/\mu}$ for all skew shapes $\la/\mu$. This family is closed under taking lower or upper ideals, since if $I \subset P_{\la/\mu}$ is an order ideal then $I=P_{\la /\nu}$ for some $\nu$ and $P \setminus I = P_{\nu/\mu}$. More specifically, if $(i,\nu_i) \in I$ is the last element in $I$ from row $i$ of $[\la/\mu]$ then $(i,j) \in I$ for all $j\leq \nu_i$, and $\nu_{i+1} \leq \nu_i$, or else $(i+1,\nu_{i+1}) \succeq (i,\nu_{i+1}) \not \in I$. 

We can then apply Theorem~\ref{thm:meta_positivity} if we show that $c_1(P_{\la/\mu}) \geq 0$ for every skew shape. 

\begin{prop}\label{prop:skew_coeff_1}
    Let $\la/\mu$ be a skew shape of size $n$ and length $\ell$ with order polynomial $\Omega(P_{\la/\mu};t) = \sum_{k=1}^n c_k(P)\,t^k$. Then if $\la/\mu$ is not connected, i.e., $\la_i=\mu_i$ or $\la_i \leq \mu_{i-1}$ for some $i$ we have $c_1(P_{\la/\mu})=0$. If $\la/\mu$ is connected then
    \[c_1(P_{\la/\mu}) = \frac{(\la_1-\mu_\ell-1)!  (\ell-1)!}{(\la_1-\mu_\ell-1+\ell)!}.\]
\end{prop}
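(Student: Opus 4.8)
The plan is to extract $c_1(P_{\la/\mu})$ directly from the Kreweras determinant in Proposition~\ref{Kreweras_form}, combined with the translation $\Omega(P_{\la/\mu};t) = \PP_{\la/\mu}(t-1)$ from equation~\eqref{eq:omega of lam/mu is PP}. Recall that for a nonempty poset $\Omega(P;0) = 0$, so $c_1(P_{\la/\mu}) = \frac{d}{dt}\Big|_{t=0}\Omega(P_{\la/\mu};t) = \frac{d}{dt}\Big|_{t=0}\PP_{\la/\mu}(t-1) = \PP_{\la/\mu}'(-1)$. Equivalently, one can write the order polynomial as $\Omega(P_{\la/\mu};t) = t\cdot g(t)$ for a polynomial $g$, and then $c_1 = g(0) = \PP_{\la/\mu}(-1) \cdot (\text{sign bookkeeping})$; more precisely I would differentiate the determinant $D(t):=\det\left[\binom{\la_i-\mu_j+t}{\la_i-\mu_j-i+j}\right]$ at $t=-1$. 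The key observation is that at $t=-1$ many of the binomial entries $\binom{\la_i-\mu_j-1}{\la_i-\mu_j-i+j}$ vanish or simplify dramatically.

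First I would analyze the matrix $M$ with entries $m_{ij}(t) = \binom{\la_i - \mu_j + t}{\la_i - \mu_j - i + j}$ at $t = -1$. Writing $a = \la_i - \mu_j$ and $k = \la_i - \mu_j - i + j = a - i + j$, the entry is $\binom{a-1}{a-i+j}$, which is nonzero only when $0 \le a - i + j \le a - 1$, i.e., when $i \le j$ and $\la_i - \mu_j \ge i - j$ fails in the boundary case — so the matrix $M(-1)$ is (close to) upper triangular in a suitable sense, and $\det M(-1) = 0$ precisely reflects $\Omega(P_{\la/\mu};0) = 0$. To get $c_1$ I would expand $\det M(t)$ to first order around $t=-1$: $\det M(t) = \det M(-1) + (t+1)\sum_{i,j} m_{ij}'(-1)\,\mathrm{cof}_{ij}(-1) + O((t+1)^2)$, where $\mathrm{cof}_{ij}$ is the $(i,j)$ cofactor. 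Since $\det M(-1) = 0$, the linear coefficient $c_1$ equals $\sum_{i,j} m_{ij}'(-1)\,\mathrm{cof}_{ij}(-1)$. Here $m_{ij}'(-1)$ is the derivative of a binomial coefficient, expressible via harmonic-number-type sums, but evaluated at the special point $t=-1$ it should collapse: $\frac{d}{dt}\binom{a+t}{k}\big|_{t=-1}$ is nonzero essentially only for the entries where $\binom{a-1}{k} = 0$ but the polynomial $\binom{a+t}{k}$ has a simple zero at $t=-1$, namely when $k = a$, giving entry $\binom{a-1}{a} = 0$ with derivative $\frac{(-1)^{a-1}}{a}$ (from $\binom{a+t}{a} = \frac{(t+1)(t+2)\cdots(t+a)}{a!}$, derivative at $t=-1$ is $\frac{(a-1)!}{a!} = \frac1a$... up to sign). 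These "critical" entries are exactly those on the diagonal-shifted band where $\la_i - \mu_j = i - j$.

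The disconnectedness claim I would handle first and separately: if $\la_i = \mu_i$ for some $i$ then the $i$-th row of $\la/\mu$ is empty, and then no box in rows $> i$ can be comparable to any box in rows $< i$, so $P_{\la/\mu}$ is a disjoint union $P' \sqcup P''$ of two nonempty posets; then $\Omega(P_{\la/\mu};t) = \Omega(P';t)\Omega(P'';t)$ is divisible by $t^2$, forcing $c_1 = 0$. The case $\la_i \le \mu_{i-1}$ (a "gap" between consecutive rows) gives a disconnection of the Young diagram and hence again a disjoint union of cell posets, so the same argument applies. For the connected case, after the cofactor analysis above, the nonzero contributions should all come from a single term (or a telescoping collection of terms) and I would expect the surviving quantity to be a single cofactor — likely the minor obtained by deleting the first row and last column, or something similarly extremal — evaluated at $t=-1$, which should itself be a determinant that reduces to $\det\left[\binom{\la_1 - \mu_\ell - 1}{\ldots}\right]$-type single-entry evaluation, yielding the clean product formula $\frac{(\la_1-\mu_\ell-1)!(\ell-1)!}{(\la_1-\mu_\ell-1+\ell)!}$. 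I note this equals $\left.\PP_{\mathrm{single\ row}}\right.$-type reciprocal, i.e., $c_1 = \left[\binom{\la_1 - \mu_\ell - 1 + \ell}{\ell}\right]^{-1}\cdot\frac{1}{\text{?}}$, but I would verify the exact constant by checking a hook (where the formula must match the hook computation of Ferroni \cite{ferroni_hooks}) and a single row/column.

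The main obstacle will be the cofactor bookkeeping: showing that among all $\ell^2$ terms $m_{ij}'(-1)\,\mathrm{cof}_{ij}(-1)$, the cancellations leave exactly the claimed product. This requires understanding the null structure of $M(-1)$ well enough to know its rank is exactly $\ell - 1$ (so that only a one-dimensional family of cofactors survives), and then identifying that surviving cofactor explicitly. A cleaner alternative I would try in parallel: use the coproduct/recursive structure — $c_1(P_{\la/\mu})$ might satisfy a simple recursion peeling off the minimal box, or one could use the interpretation of $c_1$ as (up to a factor) the value $\PP_{\la/\mu}(-1)$ and apply a combinatorial reciprocity (Stanley-type) identifying $(-1)^{?}\PP_{\la/\mu}(-1)$ with a count — but reciprocity for $\PP$ typically gives $0$ for disconnected shapes and something sign-definite otherwise, matching the dichotomy in the statement, so this is a promising route. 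Either way, the final identification of the constant is a routine (if fiddly) determinant evaluation, plausibly via the Lindström–Gessel–Viennot lemma or a known evaluation of the Kreweras determinant along a border strip.
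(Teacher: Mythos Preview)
Your disconnected case is handled exactly as in the paper (the order polynomial factors as a product of two polynomials each divisible by $t$), and you correctly identify the Kreweras determinant as the tool for the connected case. Where you diverge is in how to extract the linear term: you propose differentiating $\det M(t)$ at $t=-1$ via the Jacobi formula $\sum_{i,j} m_{ij}'(-1)\,\mathrm{cof}_{ij}(-1)$, and you flag the cofactor bookkeeping as the main obstacle you do not resolve. That obstacle is real: $m_{ij}'(-1)$ is nonzero for many entries (not only those where $m_{ij}(-1)=0$), and the cofactors of a rank-$(\ell-1)$ matrix are all proportional but still require work to identify. Your claim that the derivative ``is nonzero essentially only'' at the simple-zero entries is not correct as stated; the cancellation you need comes from the cofactors, not from the derivatives.

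The paper sidesteps this entirely by using the Leibniz expansion $\det[A_{i,j}(t)] = \sum_{w\in\mathfrak{S}_\ell}\operatorname{sign}(w)\prod_i A_{i,w(i)}(t)$ directly on the matrix with entries $A_{i,j}(t)=\binom{\la_i-\mu_j+t-1}{\la_i-\mu_j-i+j}$ (i.e., already shifted so that the order polynomial is $\det A(t)$). The key observation you are missing is that for $j\ge i$ (and $\la_i>\mu_j$), the product $(t-1+\la_i-\mu_j)\cdots(t+i-j)$ contains the factor $t$, so $A_{i,j}(t)$ is divisible by $t$ whenever $j\ge i$. Hence every permutation term is divisible by $t$, and a term contributes to $c_1$ only if \emph{exactly one} index satisfies $w(i)\ge i$. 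A two-line pigeonhole argument forces $w=(\ell,1,2,\ldots,\ell-1)$, and the single surviving product evaluates immediately to the claimed formula. This is the same determinant you are staring at, but the $t$-divisibility bookkeeping replaces all of your cofactor analysis with a one-permutation computation.
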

\begin{proof}
    If $\ell(\la)=1$ then 
    $$\Omega(P_{\la/\mu};t) = \binom{t+\la_1-\mu_1-1}{\la_1-\mu_1} = \frac{t(t+1)\cdots (t+\la_1-\mu_1-1)}{(\la_1-\mu_1)!}$$
    and so $c_1(P_{\la/\mu}) = \frac{1}{\la_1-\mu_1}$.

    If $\la/\mu$ is not connected then it consists of two or more skew shapes $\theta^1, \theta^2,\ldots$ and $P_{\la/\mu}=P_{\theta^1}+P_{\theta^2}+\cdots$, where $P+Q$ denotes the direct sum of posets $P$ and $Q$. Then $\Omega(P_{\la/\mu};t) = \prod_i \Omega(P_{\theta^i};t)$ is divisible by $t^2$ since each of the terms is divisible by $t$ and so $c_1=0$. 
    
    For $\ell:=\ell(\la)>1$ we use Kreweras's formula from Proposition~\ref{Kreweras_form}:
    $$\Omega(P_{\la/\mu};t) = \det \left[ \binom{ \la_i-\mu_j+t-1}{\la_i-\mu_j-i+j} \right]_{i,j=1}^{\ell(\la)}.$$
    Denote the entries in this matrix by $A_{i,j}(t)$.
    In that formula, if $\la_i -\mu_j -i + j <0$ then the term is considered 0.
    For every $j \geq i$ we then have
    $$A_{i,j}(t)=\binom{\la_i - \mu_j +t-1}{\la_i-\mu_j-i+j} = \frac{ (t-1+\la_i-\mu_j) \cdots (t+i-j)}{(\la_i-\mu_j-i+j)!}.$$
    Since $\la_i >\mu_i \geq \mu_j$ then there is a term $t$ in the above product and the binomial is divisible by $t$, so $A_{i,j}(t)=tB_{i,j}(t)$ for $i \leq j$.    
    
     We now expand
     \[\det\left[ A_{i,j}(t) \right]_{i,j=1}^\ell = \sum_{w \in \mathfrak{S}_\ell} \operatorname{sign}(w) \prod_i A_{i,w(i)}(t).\]
     The products are all divisible by $t$ since $w(i) \geq i$ for some $i$ always. In order to get a linear term $t$ then exactly one of $A_{i,w(i)}(t)$ should be divisible by $t$, i.e., there is exactly one $i$ for which $i\leq w(i)$. We must then have that $w(i)=\ell$ since $\ell \geq w(i)$ for all $i$, and then $w(j) <j$ for all $j \neq i$. Since $w(1) \not<1$ we must then have $i=1$, so $w(1)=\ell$. Since the remaining entries in the permutation matrix of $w$ should be below $i=j$, we must have $w(i)=i-1$ for all $i=2,\ldots,\ell$. Then the only term in the determinant expansion which gives a linear term is
     $$A_{1,\ell}(t) \prod_{i=2}^\ell A_{i,i-1}(t) = \binom{t-1+\la_1-\mu_\ell}{\la_1-\mu_\ell-1+\ell} \prod_{i=2}^\ell \binom{t-1+\la_i -\mu_{i-1}}{\la_i - \mu_{i-1}-1}$$
We have $ \binom{t-1+\la_i -\mu_{i-1}}{\la_i - \mu_{i-1}-1} = \frac{ (t+1)\cdots(t+\la_i-\mu_{i-1}-1)}{(\la_i-\mu_{i-1}-1)!} = 1 + O(t)$ and 
\begin{align*}
\binom{t-1+\la_1-\mu_\ell}{\la_1-\mu_\ell-1+\ell} &= \frac{ (t+\la_1-\mu_\ell-1) \cdots (t-\ell+1)}{(\la_1-\mu_\ell-1+\ell)!}\\
&= t \frac{(\la_1-\mu_\ell-1)! (-1)^{\ell-1} (\ell-1)!}{(\la_1-\mu_\ell-1+\ell)!} + O(t^2)
\end{align*}
Since $\operatorname{sign}(w)=(-1)^{\ell-1}$ we obtain the formula.
 \end{proof}

We can now derive Theorem~\ref{thm:main-positivity_skew} by combining Proposition~\ref{prop:skew_coeff_1} and the Theorem~\ref{thm:meta_positivity}.

\begin{thm}\label{thm:positivity_skew}
    Let $\la/\mu$ be a skew shape of size $n$. Then the coefficients of its order polynomial $\Omega(P_{\la/\mu};t)$ are all nonnegative and $|\la/\mu|! \cdot \Omega(P_{\la/\mu};t) \in \mathbb{Z}_{\geq 0}[t]$.
\end{thm}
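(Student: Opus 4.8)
The plan is to deduce Theorem~\ref{thm:positivity_skew} by chaining together the two preceding results: the structural reduction of Theorem~\ref{thm:meta_positivity} turns the problem into checking nonnegativity of a single coefficient, and Proposition~\ref{prop:skew_coeff_1} supplies exactly that check. So the argument is essentially a verification that the hypotheses of the meta-theorem hold for the family of cell posets of skew shapes.

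First I would let $\mathcal{F}$ be the class of all posets $P_{\la/\mu}$, where $\la/\mu$ ranges over skew shapes (including the degenerate cases giving $\emptyset$ and the one-element poset). The key combinatorial point is that $\mathcal{F}$ is closed under passing to lower and upper order ideals: an order ideal $I$ of $P_{\la/\mu}$ is precisely the box set $X(\nu/\mu)$ for some partition $\mu \subseteq \nu \subseteq \la$ — checking this row by row, if $(i,\nu_i)$ is the last element of $I$ in row $i$ then all $(i,j)$ with $j \le \nu_i$ lie in $I$, and $\nu_{i+1}\le\nu_i$, since otherwise $(i+1,\nu_{i+1})\succeq(i,\nu_{i+1})\notin I$ — and then $P_{\la/\mu}\setminus I = P_{\la/\nu}$. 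Hence both $I$ and its complement again belong to $\mathcal{F}$.

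Next I would establish that $c_1(P_{\la/\mu}) := [t^1]\,\Omega(P_{\la/\mu};t) \ge 0$ for every skew shape, which is the content of Proposition~\ref{prop:skew_coeff_1}. When $\la/\mu$ is disconnected, $P_{\la/\mu}$ is a nontrivial direct sum of smaller cell posets, so its order polynomial is a product of polynomials each divisible by $t$, forcing $c_1 = 0$. When $\la/\mu$ is connected, one reads the linear coefficient directly off the Kreweras determinant~\eqref{eq:krew det}: in the expansion over $w\in\mathfrak{S}_\ell$, every summand is divisible by $t$, and a summand contributes a nonzero linear term only if exactly one of its binomial factors is divisible by $t$; tracking where the factor $t$ can occur shows this pins down the single permutation $w = \ell\,1\,2\,\cdots(\ell-1)$, whose surviving term evaluates to the manifestly positive quantity $\frac{(\la_1-\mu_\ell-1)!\,(\ell-1)!}{(\la_1-\mu_\ell-1+\ell)!}$. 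In both cases $c_1 \ge 0$.

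With closure under ideals and $c_1 \ge 0$ in hand, Theorem~\ref{thm:meta_positivity} applies verbatim and yields that every coefficient $c_k(P_{\la/\mu})$ is nonnegative, together with $|\la/\mu|!\cdot\Omega(P_{\la/\mu};t)\in\mathbb{Z}_{\ge 0}[t]$, the integrality being the elementary observation that $n!\binom{t}{k}\in\mathbb{Z}[t]$ applied to formula~\eqref{eq:omega_binom}. The genuine obstacle in this chain is the positivity of the linear term: the closure property and the induction are formal, but controlling $c_1$ requires the determinantal identity of Kreweras plus careful sign bookkeeping — in particular, verifying that exactly one permutation survives and that the factor $(-1)^{\ell-1}$ from $\operatorname{sign}(w)$ is cancelled precisely by the $(-1)^{\ell-1}$ coming from the falling-factorial numerator of the surviving binomial $\binom{t-1+\la_1-\mu_\ell}{\la_1-\mu_\ell-1+\ell}$. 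I would make sure that sign computation is airtight before invoking the meta-theorem.
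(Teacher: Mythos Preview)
Your proposal is correct and follows exactly the paper's approach: the paper derives Theorem~\ref{thm:positivity_skew} by combining Proposition~\ref{prop:skew_coeff_1} (positivity of $c_1$) with Theorem~\ref{thm:meta_positivity}, after noting at the start of Section~\ref{sec:pp} that the family of cell posets of skew shapes is closed under lower and upper ideals via the same row-by-row argument you give. Your discussion of the sign cancellation in the Kreweras determinant also mirrors the proof of Proposition~\ref{prop:skew_coeff_1}.
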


\begin{rem}
    Order polynomials have a generalization using the theory of $(P,\omega)$-partitions where $\omega$ is a labelling of $P$ \cite[Section~3.15]{ec1}. Let $\Omega(P,\omega;t)$ be the {\em $(P,\omega)$-order polynomial}, which counts the number of $(P,\omega)$ partitions $\sigma:P\to \{1,\ldots,t\}$ when $t \in \mathbb{Z}_{\geq 1}$. Proposition~\ref{prop:omega_x_y} holds when $\omega$ is a natural labelling. However, it is not true that in general $\Omega(P_{\lambda/\mu},\omega;t)$ has nonnegative coefficients. For instance, if $\omega$ is a {\em Schur labelling} \cite[Section~7.19]{EC2}, then $\Omega(P_{\lambda/\mu},\omega;t)$ counts the number of \emph{semistandard Young tableaux} of shape $\lambda/\mu$ with entries $0,\ldots,t-1$. From the hook-content formula \cite[Cor.~7.21.4]{EC2} or from a direct calculation one can check that 
    \[
    \Omega(P_{11},\omega;t) = \binom{t}{2} = \frac{t^2}{2}-\frac12.
    \]
    In particular, there is no extension of the last theorem to general labellings.
\end{rem}

\section{Fence posets, shard polytopes, and snake matroids}

\subsection{Fence posets} \label{sec:fence posets}

When the skew shape $\lambda/\mu$ has a Young diagram that does not contain a $2\times 2$ square, we say it is a \emph{ribbon} or a \emph{border strip}. The posets that arise as cell posets of border strips are called \emph{fence posets}. They appear naturally in the study of quiver representations, cluster algebras and, as we will explain below, matroid theory. See \cite{mcconville-sagan-smyth} for a more detailed discussion on some of these connections.

Fence posets have attracted considerable attention from combinatorialists in the recent years. One of the main open problems regarding fences was a conjecture by Morier-Genoud and Ovsienko \cite{morier-genoud-ovsienko} asserting the unimodality of the rank polynomials of all fence posets. This conjecture was confirmed by Kantarc{\i}\ O{\u g}uz and Ravichandran in 
\cite{oguz-ravichandran}. 

Specializing Theorem~\ref{thm:main-positivity_skew} to the case in which $\lambda/\mu$ is a ribbon shape leads to the following result.

\begin{corollary}\label{coro:fences-order-positive}
    Fence posets have order polynomials with nonnegative coefficients.
\end{corollary}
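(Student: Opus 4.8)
The plan is to observe that the class of fence posets coincides with the class of cell posets $P_{\lambda/\mu}$ as $\lambda/\mu$ ranges over all ribbon (equivalently, border strip) shapes, and then to invoke Theorem~\ref{thm:main-positivity_skew} directly. Indeed, a fence poset admits the standard combinatorial description as a poset on elements $y_1,\dots,y_n$ whose Hasse diagram is the path $y_1 - y_2 - \cdots - y_n$ in which each edge is either an ascent ($y_i \lessdot y_{i+1}$) or a descent ($y_i \gtrdot y_{i+1}$); equivalently it is encoded by a word in $\{U,D\}$ of length $n-1$, or by the composition recording the lengths of its maximal monotone runs. The first step is to unwind the correspondence between such data and ribbons: reading the boundary lattice path of a skew Young diagram that contains no $2\times 2$ square from bottom-left to top-right, the sequence of vertical and horizontal unit steps is exactly such a $\{U,D\}$-word, and the cell poset of that ribbon, with covering relations $(i,j)\succeq(i+1,j)$ and $(i,j)\succeq(i,j+1)$, is precisely the associated fence. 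In the terminology fixed in Section~\ref{sec:fence posets} this identification is literally the definition of a fence poset, so this step is just a matter of matching conventions.

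Once this dictionary is in place the corollary is immediate: given a fence poset $P$, pick a ribbon $\lambda/\mu$ with $P \cong P_{\lambda/\mu}$, and apply Theorem~\ref{thm:main-positivity_skew}, which gives that $\Omega(P;t) = \Omega(P_{\lambda/\mu};t)$ has nonnegative coefficients. The stronger integrality statement $|P|!\cdot\Omega(P;t)\in\mathbb{Z}_{\geq 0}[t]$ then also follows from Theorem~\ref{thm:positivity_skew}.

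The only place requiring any care — and it is minor — is the first step: one should check explicitly that every abstractly defined fence poset is realized by some ribbon and that the two notions of covering relation are compatible (in particular that the up/down pattern of the fence is read off consistently along the ribbon boundary). This is worth spelling out because "fence poset" is often defined abstractly in the literature on cluster algebras and quiver representations rather than as a cell poset. Beyond this translation there is no obstacle and no new positivity argument is needed, since all the work is done by Theorem~\ref{thm:main-positivity_skew}.
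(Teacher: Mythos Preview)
Your proposal is correct and matches the paper's own proof exactly: the paper simply states that specializing Theorem~\ref{thm:main-positivity_skew} to ribbon shapes yields the corollary, relying on the identification of fence posets with cell posets of border strips that it has already set up. Your additional remarks about spelling out the fence/ribbon dictionary are reasonable care but not strictly needed given the paper's conventions.
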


Among all fence posets, the most pervasive is the zig-zag poset, which corresponds to a fence poset with covering relations of the form $x_1\succ x_2 \prec x_3 \succ x_4 \prec \cdots \succ x_{2n}$ or $x_1\succ x_2 \prec x_3 \succ x_4 \prec \cdots \prec x_{2n+1}$. In this paper we will denote by $Z_n$ the zig-zag poset on $n$ elements. For a depiction of the Hasse diagram of $Z_9$, see Figure~\ref{fig:zig-zag}.

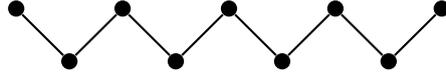
\begin{figure}[ht]
    \centering
	\begin{tikzpicture}  
	[scale=0.7,auto=center,every node/.style={circle,scale=0.8, fill=black, inner sep=2.7pt}] 
	\tikzstyle{edges} = [thick];

    \node[] (a0) at (-1,1) {}; 
	\node[] (a1) at (0,0) {};  
	\node[] (a2) at (1,1) {};  
	\node[] (a3) at (2,0) {};  
        \node[] (a4) at (3,1) {};  
	\node[] (a5) at (4,0) {};  
	\node[] (a6) at (5,1) {};  
	\node[] (a7) at (6,0) {};  
	\node[] (a8) at (7,1) {};  

     \draw[edges] (a0) -- (a1); 
    \draw[edges] (a1) -- (a2);
	\draw[edges] (a2) -- (a3);
	\draw[edges] (a3) -- (a4);
	\draw[edges] (a4) -- (a5);
	\draw[edges] (a5) -- (a6);
	\draw[edges] (a6) -- (a7);
	\draw[edges] (a7) -- (a8);

	\end{tikzpicture}\caption{The Hasse diagram of the zig-zag poset $Z_9$.}\label{fig:zig-zag}
\end{figure}

The order polynomial and the order polytope of $Z_n$ have been studied in several articles, including \cite{stanley-polytopes,kirillov,chen-zhang,coons-sullivant,petersen-zhuang}. Despite its relevance, not many results regarding $\Omega(Z_n;t)$ have been obtained. Moreover, it has been a long-standing folklore open problem to prove that $\mathscr{O}(Z_n;t)$ is Ehrhart positive. Evidently, now we can obtain a proof as an easy special case of Theorem~\ref{thm:positivity_skew} since $Z_n = P_{\zeta_n}$, where $\zeta_n$ is a zig-zag ribbon of size $n$:
\begin{equation} \label{eq:zigzag skew shape}
\zeta_n  = \begin{cases} 
 (k+1,k,\ldots,2)/(k-1,k-2,\ldots,1) &\text{ if } n=2k\\
(k+1,k,\ldots,1) /(k-1,k-2,\ldots,1) &\text{ if } n=2k+1.
\end{cases}
\end{equation}

\begin{corollary}
    For every $n\geq 2$, the zig-zag poset $Z_n$ has an order polynomial with nonnegative coefficients. In particular, $\mathscr{O}(Z_n)$ is Ehrhart positive.
\end{corollary}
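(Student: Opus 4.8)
The plan is to deduce both claims directly from Corollary~\ref{coro:fences-order-positive} and equation~\eqref{eq:order-ehrhart}, so the argument is essentially bookkeeping. First I would record that $Z_n=P_{\zeta_n}$ for the ribbon $\zeta_n$ of equation~\eqref{eq:zigzag skew shape}: the Young diagram of $\zeta_n$ is a border strip (no $2\times2$ square), so $Z_n$ is a fence poset, and a direct inspection of the covering relations $(i,j)\succeq(i+1,j)$, $(i,j)\succeq(i,j+1)$ confirms that its Hasse diagram is the alternating path $x_1\succ x_2\prec x_3\succ\cdots$ defining $Z_n$. Concretely, for $n=2k$ each of the $k$ rows of $(k+1,k,\ldots,2)/(k-1,\ldots,1)$ has exactly two boxes and consecutive rows overlap in a single column, and the case $n=2k+1$ is analogous with one row of length one.

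The first assertion is then immediate: it is Corollary~\ref{coro:fences-order-positive} (itself the specialization of Theorem~\ref{thm:positivity_skew} to ribbon shapes) applied to $P=Z_n$, which gives that all coefficients of $\Omega(Z_n;t)$ are nonnegative.

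For the Ehrhart positivity I would invoke equation~\eqref{eq:order-ehrhart} with $P=Z_n$, namely $\ehr(\mathscr{O}(Z_n),t)=\Omega(Z_n;t+1)$. Writing $\Omega(Z_n;t)=\sum_{k\ge 1}c_k t^k$ with $c_k\ge 0$ by the previous step, the binomial theorem yields
\[
\ehr(\mathscr{O}(Z_n),t)=\sum_{k\ge 1}c_k(t+1)^k=\sum_{j\ge 0}\Bigl(\sum_{k\ge j}\binom{k}{j}c_k\Bigr)t^j,
\]
and each coefficient $\sum_{k\ge j}\binom{k}{j}c_k$ is a sum of nonnegative numbers, hence nonnegative; thus $\mathscr{O}(Z_n)$ is Ehrhart positive.

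There is no genuine obstacle in this corollary: it is a special case of Theorem~\ref{thm:positivity_skew}. The only points worth a sentence are the identification $Z_n=P_{\zeta_n}$ (already recorded around equation~\eqref{eq:zigzag skew shape}) and the elementary observation that the substitution $t\mapsto t+1$ sends a polynomial with nonnegative coefficients to another one with nonnegative coefficients.
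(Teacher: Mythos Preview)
Your argument is correct and is exactly the paper's: the corollary is recorded as an immediate special case of Theorem~\ref{thm:positivity_skew} via the identification $Z_n=P_{\zeta_n}$ of equation~\eqref{eq:zigzag skew shape}, and the Ehrhart positivity follows from equation~\eqref{eq:order-ehrhart} by the observation (noted in the introduction) that nonnegativity of $\Omega(P;t)$ passes to $\Omega(P;t+1)$. Your extra sentence making the $t\mapsto t+1$ step explicit is fine but not needed.
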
Even though we have confirmed the nonnegativity of the order polynomials of zig-zag posets, we do not have a combinatorial interpretation for the coefficients of $n!\cdot \Omega(Z_n;t)$. By Proposition~\ref{Kreweras_form} we have that 
\begin{equation} \label{eq:krewzigzag}
    \Omega(Z_n;t)= \det\left[ \binom{t+1-i+j}{2j-2i+2}
\right]_{i,j=1}^{k},
\end{equation}
where $k=\lfloor n/2 \rfloor$. Using this determinant formula or by a direct computation one can see that 
\begin{equation} \label{eq:ex order poly zig-zags}
     n!\cdot \Omega(Z_n;t) = \begin{cases}
        t & n = 1,\\
        t^2 + t & n = 2,\\
        2t^3 + 3t^2 + t & n = 3,\\
        5t^4 + 10t^3 + 7t^2 + 2t & n = 4,\\
        16t^5 + 40t^4 + 40t^3 + 20t^2 + 4t & n = 5,\\
        61t^6 +  183 t^5 + 235 t^4 +  165 t^3 +  64t^2 +12t & n=6,\\
        \text{etc.} & 
    \end{cases}
    \end{equation}
Since the order polynomial of every poset evaluated at $t=1$ gives the value $1$, it is no surprise that the coefficients on the right-hand-side sum to a factorial. It is well known that the leading terms are the so-called \emph{Euler numbers} $E_n$ \cite[\href{https://oeis.org/A000111}{A000111}]{OEIS}, the second leading term is $E_n\cdot n/2$ and, by Proposition~\ref{prop:skew_coeff_1}, the term of degree $1$ is always $\lfloor \frac{n-1}{2}\rfloor ! \cdot \lceil \frac{n-1}{2}\rceil!$ \cite[\href{https://oeis.org/A010551}{A0110551}]{OEIS}. All of these quantities enumerate certain permutations on $n$ elements. We leave the following open problem:

\begin{problem}
    Find a statistic on permutations of size $n$ whose generating function gives the polynomials $n!\cdot \Omega(Z_n;t)$.
\end{problem}

\subsection{Matroids}

In this section we will address Ehrhart polynomials of matroids. Recall that a matroid $\M$ is a pair $(E,\mathcal{B})$ where $E$ is a finite set and $\mathcal{B}\subseteq 2^E$ is a nonempty collection of sets called bases that satisfies the following exchange axiom:
    \[ \text{for all $B_1,B_2\in\mathcal{B}$, $i\in B_1\smallsetminus B_2$ there exists $j\in B_2\smallsetminus B_1$ such that $(B_1\smallsetminus i)\cup j\in \mathcal{B}$}.\]
Henceforth we will adopt the terminology and notation of \cite{oxley}, and we refer to it for any undefined terminology regarding matroids. 

Let $\M$ be a matroid on $E=[n]$. The \emph{matroid base polytope} of $\M$ is the polytope $\mathscr{P}(\M)\subseteq \mathbb{R}^n$ defined by
    \[ \mathscr{P}(\M) := \text{convex hull} ( e_B : B\in \mathscr{B}),\]
where $e_B := \sum_{i\in B} e_i$ stands for the indicator vector of the basis of $\M$. 

Ehrhart polynomials of matroid base polytopes have been intensively studied during the last two decades. The main motivation was the Ehrhart positivity conjecture posed by De Loera, Haws, and K\"oppe in \cite[Conjecture~2]{deloera-haws-koppe}, and a stronger version by Castillo and Liu in \cite{castillo-liu} for all generalized permutohedra. Even though these conjectures were disproved in \cite{ferroni}, in many interesting special cases Ehrhart positivity indeed holds true. Some instances appearing in the literature are uniform matroids \cite{ferroni1}, matroids of rank $2$ \cite{ferroni-jochemko-schroter}, panhandle matroids \cite{mcginnis,deligeorgaki-mcginnis-vindas}, Catalan matroids \cite{chen-li-yao}, generalized permutohedra that are Minkowski sums of standard simplices \cite{postnikov}, generalized permutohedra of dimension at most~$6$ \cite{castillo-liu}, and various special matroids of small size. More so, the linear term of the Ehrhart polynomials\footnote{We note that the linear term of the Ehrhart polynomial of an order polytope equals $\sum_{i} i c_i(P)$, where $c_i(P)$ denotes the degree $i$ coefficient in $\Omega(P;t)$.} of matroids and generalized permutohedra has been proved nonnegative by Jochemko and Ravichandran \cite{jochemko-ravichandran} and, using different tools, by Castillo and Liu \cite{castillo-liu2}. Further related articles include \cite{knauer-martinez-ramirez,benedetti-knauer-valencia,fan-li,hanely,dania}.

The main focus in this section is to explain which matroids can be proved to be Ehrhart positive by applying Theorem~\ref{thm:main-positivity_skew}. To this end we recapitulate the essentials on the class of \emph{snake matroids}, assuming that the reader is familiar with the concept of lattice path matroid (see \cite{bonin-demier-noy}).

\begin{definition}
    A matroid $\M$ is said to be a \emph{snake matroid} if $\M$ satisfies the following three properties:
    \begin{enumerate}[(i)]
        \item $\M$ has at least two elements and is connected.
        \item $\M$ is isomorphic to a lattice path matroid.
        \item $\M$ does not have a $\mathsf{U}_{2,4}$ minor, i.e., $\M$ is binary.
    \end{enumerate}
\end{definition}

We refer to \cite{knauer-martinez-ramirez}, \cite{benedetti-knauer-valencia}, \cite{dania}, and \cite[Appendix~A]{ferroni-schroter} for a detailed discussion on snakes. When presented as lattice path matroids, snakes have diagrams that look like border strips. The following result is essentially a reformulation of a handy result by Benedetti, Knauer, and Valencia-Porras\footnote{In their work, the authors use the term ``affinely equivalent'', but all the maps appearing in their proof are piecewise linear transformations that preserve the lattice.} appearing in \cite[Theorem~3.3]{benedetti-knauer-valencia}.

\begin{theorem}
    Let $P$ be a finite poset. The following conditions are equivalent.
    \begin{enumerate}[\normalfont(i)]
        \item $\mathscr{O}(P)$ is integrally equivalent to a matroid polytope.
        \item $\mathscr{O}(P)$ is integrally equivalent to a matroid polytope coming from a direct sum of loops, coloops, and snakes.
        \item $P$ is a disjoint union of fences and single element posets.
    \end{enumerate}
\end{theorem}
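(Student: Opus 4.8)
The plan is to prove the three stated conditions are equivalent by establishing the cycle (iii) $\Rightarrow$ (ii) $\Rightarrow$ (i) $\Rightarrow$ (iii), leaning on the cited theorem of Benedetti, Knauer, and Valencia-Porras~\cite[Theorem~3.3]{benedetti-knauer-valencia} as the main engine, together with the elementary behavior of order polytopes under disjoint unions of posets. The implication (ii) $\Rightarrow$ (i) is immediate, since a direct sum of loops, coloops, and snakes is in particular a matroid, so its base polytope is a matroid polytope.

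For (iii) $\Rightarrow$ (ii), first I would record the behavior of order polytopes under disjoint unions: if $P = P_1 \sqcup P_2$ is a disjoint union of posets, then $\mathscr{O}(P) = \mathscr{O}(P_1) \times \mathscr{O}(P_2)$ is a direct product, and on the matroid side the base polytope of a direct sum $\M_1 \oplus \M_2$ is $\mathscr{P}(\M_1) \times \mathscr{P}(\M_2)$; integral equivalence is preserved under taking products of polytopes living in complementary coordinate subspaces. Hence it suffices to treat each connected component of $P$ separately. A single-element poset has order polytope a segment $[0,1]$, which is the base polytope of a coloop (equivalently, after a lattice-preserving flip, a loop). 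A connected fence poset $P$ has, by~\cite[Theorem~3.3]{benedetti-knauer-valencia}, an order polytope integrally equivalent to the base polytope of a snake matroid: the border-strip diagram of the snake is read off directly from the ribbon shape whose cell poset is $P$. Assembling the components gives a direct sum of loops, coloops, and snakes, which is (ii).

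For (i) $\Rightarrow$ (iii), suppose $\mathscr{O}(P)$ is integrally equivalent to a matroid polytope $\mathscr{P}(\M)$. By the cited theorem this already forces $\M$ to decompose as a direct sum of loops, coloops, and snakes (this is exactly the content of the reformulated statement in~\cite[Theorem~3.3]{benedetti-knauer-valencia}: the only matroid polytopes that are integrally equivalent to order polytopes are these). The corresponding factorization $\mathscr{P}(\M) = \prod_i \mathscr{P}(\M_i)$ into polytopes of loops, coloops, and snakes pulls back through the integral equivalence to a product decomposition of $\mathscr{O}(P)$; since an order polytope is a product of order polytopes precisely when the poset is a disjoint union (the facet structure of $\mathscr{O}(P)$ detects the connected components of the comparability graph of $P$), we conclude $P = \bigsqcup_i Q_i$ where $\mathscr{O}(Q_i)$ is integrally equivalent to $\mathscr{P}(\M_i)$. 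A $1$-dimensional factor forces $Q_i$ to be a single element; a snake factor forces $Q_i$ to be a connected fence, again by~\cite[Theorem~3.3]{benedetti-knauer-valencia} run in reverse. This yields (iii).

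The main obstacle is bookkeeping rather than conceptual: one must check that the integral equivalences furnished by~\cite{benedetti-knauer-valencia} genuinely glue along disjoint unions and direct sums (their maps are piecewise-linear lattice-preserving transformations, so products of such maps are again of this type, but this should be stated carefully), and one must justify that a product decomposition of $\mathscr{O}(P)$ into lattice polytopes necessarily respects the poset structure, i.e., comes from a disjoint union of $P$. The latter follows from Stanley's description~\cite{stanley-polytopes} of the faces of $\mathscr{O}(P)$ in terms of partitions of $P$ into intervals, from which one reads that $\mathscr{O}(P)$ is a nontrivial Cartesian product if and only if the Hasse diagram of $P$ is disconnected; I would isolate this as a small lemma before assembling the three implications.
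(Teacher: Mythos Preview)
Your approach is aligned with the paper's: the paper does not give a proof at all, but simply records the theorem as ``essentially a reformulation'' of \cite[Theorem~3.3]{benedetti-knauer-valencia}. Your proposal is therefore more detailed than what the paper offers, since you attempt to spell out how the reformulation unfolds into the three-way equivalence via disjoint unions and direct sums.

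One caution on your (i) $\Rightarrow$ (iii): you write that the product factorization $\mathscr{P}(\M) = \prod_i \mathscr{P}(\M_i)$ ``pulls back through the integral equivalence to a product decomposition of $\mathscr{O}(P)$,'' but an integral (affine, lattice-preserving) isomorphism does not in general send Cartesian products in one coordinate system to Cartesian products in another. What transfers is the \emph{combinatorial} product structure of the face lattice, so your auxiliary lemma must be phrased that way: $\mathscr{O}(P)$ is combinatorially a product if and only if $P$ is disconnected. This is indeed extractable from Stanley's face description in \cite{stanley-polytopes}, but your current wording blurs the distinction. Note also the paper's footnote that the maps in \cite{benedetti-knauer-valencia} are actually piecewise-linear lattice-preserving rather than affine, which is another reason to work combinatorially rather than with literal coordinate products. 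None of this is fatal, and the paper itself does not engage with these details; but if you want your unpacking to stand as a proof, that step needs tightening.
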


Notice that if two polytopes are integrally equivalent, then their Ehrhart polynomials must coincide. In particular, the preceding theorem says that the only Ehrhart polynomials that Theorem~\ref{thm:main-positivity_skew} captures within the realm of matroids are precisely those of the direct sums of snakes, loops and coloops. In particular, we obtain the following result.

\begin{corollary}\label{coro:matroids}
    Let $\M$ be a matroid that is a direct sum of snakes, loops, and coloops. Then, $\mathscr{P}(\M)$ is Ehrhart positive.
\end{corollary}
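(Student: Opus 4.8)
The plan is to deduce this directly from Corollary~\ref{coro:fences-order-positive} together with two elementary facts about Ehrhart polynomials. First I would record that the base polytope of a direct sum of matroids is the Cartesian product of the base polytopes of the summands: if $\M = \M_1 \oplus \cdots \oplus \M_r$, then $\mathscr{P}(\M) = \mathscr{P}(\M_1) \times \cdots \times \mathscr{P}(\M_r)$, because a basis of $\M$ is a disjoint union of bases of the $\M_i$ and the corresponding indicator vector splits accordingly. Since lattice points in a dilated Cartesian product are pairs of lattice points in the dilated factors, this gives $\ehr(\mathscr{P}(\M), t) = \prod_{i=1}^r \ehr(\mathscr{P}(\M_i), t)$.

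Next I would dispose of the loops and coloops: each such summand has a single-point base polytope (a coordinate frozen at $0$ or $1$), contributing the factor $1$ to the product, so these summands do not affect the Ehrhart polynomial. For each snake summand $\M_i$, the preceding theorem (the reformulation of a result of Benedetti--Knauer--Valencia-Porras) gives that $\mathscr{P}(\M_i)$ is integrally equivalent to the order polytope $\mathscr{O}(P_i)$ for some fence poset $P_i$; since integrally equivalent polytopes have equal Ehrhart polynomials, \eqref{eq:order-ehrhart} yields $\ehr(\mathscr{P}(\M_i), t) = \Omega(P_i; t+1)$. By Corollary~\ref{coro:fences-order-positive}, the polynomial $\Omega(P_i; t)$ has nonnegative coefficients, and substituting $t \mapsto t+1$ preserves this because each $(t+1)^k$ has nonnegative coefficients and a nonnegative combination of such polynomials does too. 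As a product of polynomials with nonnegative coefficients again has nonnegative coefficients, $\ehr(\mathscr{P}(\M), t)$ has nonnegative coefficients, i.e., $\mathscr{P}(\M)$ is Ehrhart positive.

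I do not expect a genuine obstacle here: the substantive input is Corollary~\ref{coro:fences-order-positive}, and everything else is bookkeeping. The only points that deserve an explicit line are that direct summing with loops and coloops leaves the Ehrhart polynomial unchanged, and that integral (equivalently, unimodular) equivalence — which is precisely the conclusion of the cited structural theorem — transports Ehrhart polynomials verbatim; both are standard.
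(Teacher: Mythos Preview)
Your proposal is correct and follows essentially the same line the paper has in mind: the corollary is stated without proof in the paper, immediately after the Benedetti--Knauer--Valencia-Porras structural theorem, and is meant to be read as a direct consequence of that theorem together with Corollary~\ref{coro:fences-order-positive}. Your decomposition into factors, handling of loops/coloops as zero-dimensional pieces, and passage from $\Omega(P_i;t)$ to $\Omega(P_i;t+1)$ are exactly the bookkeeping the paper leaves to the reader.
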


In \cite[Appendix~A]{ferroni-schroter} Ferroni and Schr\"oter proved that direct sums of snake matroids, loops and coloops, span the valuative group of matroids introduced by Eur, Huh, and Larson \cite{eur-huh-larson}. In other words, the Ehrhart polynomial of every matroid can be written as an integer combination of Ehrhart polynomials of matroids that are direct sums of snakes, loops and coloops. As a consequence of Corollary~\ref{coro:matroids}, we observe the following property of Ehrhart polynomials of matroid base polytopes: they are nonnegative on a spanning set of the valuative group, yet they fail to be nonnegative in general due to the counterexamples found in \cite{ferroni}. To the best of the authors' knowledge, this is the first valuative invariant of matroids arising `in nature'\footnote{A different, much more ad-hoc, example is described in \cite[Theorem~6.1]{ferroni-fink}, where in fact positivity holds for all realizable matroids.} for which this phenomenon shows.

We close this section with one intriguing open problem. All snake matroids are series-parallel matroids. However, the converse is false, as can be seen by taking the graphic matroid induced by a $3$-cycle in which every edge is doubled in parallel---that matroid is series-parallel, but not isomorphic to a lattice path matroid. We do not know how to extend the Ehrhart positivity of snakes to all series-parallel matroids.

\begin{problem}
    Show that all series-parallel matroids are Ehrhart positive.
\end{problem}

We note that this is a special case of a conjecture by Ferroni, Jochemko, and Schr\"oter \cite[Conjecture~6.2]{ferroni-jochemko-schroter} on the Ehrhart positivity of positroids. 

\begin{remark}
    From our main result, it follows that Corollary~\ref{coro:matroids} admits the following upgrade: the Ehrhart polynomial of a matroid that is a direct sum of snakes and loops/coloops is nonnegative when seen as a polynomial in $t-1$. This property is not true for positroids. For example, the hypersimplex $\Delta_{2,9}$ is a positroid but its Ehrhart polynomial at $t-1$ has a negative cubic coefficient.
\end{remark}

\begin{remark}
    After the completion of this paper, Chavez, Dorpalen-Barry, Ferroni, Liu, Rinc\'on, and Vindas-Meléndez \cite{square} found a surprising application of the calculations carried out in Section~\ref{sec:pp}. These authors were able to prove that the $\beta$-invariant of any matroid without loops and coloops can be read off of the Ehrhart polynomial of the base polytope. It is utterly opaque to see the property proved in \cite{square} from just the definition of the Ehrhart polynomial and the $\beta$-invariant. Currently, the only known proof relies on Proposition~\ref{prop:skew_coeff_1} in an essential way.
\end{remark}

\subsection{Shard polytopes} The family of {\em shard polytopes} (of type A) were introduced in \cite{padrol-pilaud-ritter} by Padrol--Pilaud--Ritter in the context of lattice quotients of the weak order of the symmetric group.

Shard polytopes can be defined as follows. An {\em arc} is a tuple $\alpha=(a,b,A,B)$ for integers $a$ and $b$ with $1\leq a <b\leq n+1$, and $A$, $B$ a partition of the set $[a,b-1]:=\{a,a+1,\ldots,b-1\}$. Given an arc $\alpha$, a {\em $\alpha$-fall} ({\em $\alpha$-rise} resp.) as a position $j \in [a,b-1]$ such that $j\in \{a\}\cup A$ and $j+1\in B\cup \{b\}$ ($j\in \{a\} \cup B$ and $j+1\in A \cup \{b\}$ resp.). Given an arc $\alpha$, the {\em shard polytope} $\SP(\alpha)$ has the following H-description,
\[
\SP(\alpha) \,=\, \left\{ (x_1,\ldots,x_n) \in \mathbb{H}  \quad{\Bigg |}\quad \begin{array}{l} x_i=0 \text{ for } i\in [n]\setminus [a,b],\\ x_{a'}\geq 0 \text{ for } a'\in A, x_{b'}\leq 0 \text{ for } b' \in B,\\ \sum_{i\leq f} x_i \leq 1 \text{ for an $\alpha$-fall $f$},\\ \sum_{i\leq r} x_i \geq 0 \text{ for an $\alpha$-rise $r$} \end{array}  \right\},
\]
where $\mathbb{H}$ is the hyperplane $\{(x_1,\ldots,x_n)  \in \mathbb{R}^n \mid \sum_i x_i =0\}$.

The following theorem is known among the specialists. However, to the best of our knowledge, this is the first time it is stated in this form.

\begin{proposition}
    Shard polytopes of type A are exactly the base polytopes of snake matroids.
\end{proposition}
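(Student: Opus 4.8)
The plan is to establish a dictionary between arcs $\alpha=(a,b,A,B)$ and fence posets (equivalently, ribbon skew shapes), and then invoke the already-cited result of Benedetti--Knauer--Valencia-Porras \cite[Theorem~3.3]{benedetti-knauer-valencia} together with the theorem above identifying order polytopes of fences with snake matroid polytopes. First I would unwind the H-description of $\SP(\alpha)$: the positions $[a,b-1]$ carry a sequence of ``falls'' and ``rises'' determined by the partition $[a,b-1]=A\sqcup B$, and consecutive runs of $A$'s and $B$'s produce an alternating pattern of up-steps and down-steps. This alternating pattern is exactly the data of a ribbon (border strip): a maximal run of $A$'s corresponds to a horizontal arm and a maximal run of $B$'s to a vertical arm (or vice versa), so to each arc $\alpha$ one associates a ribbon shape $\theta(\alpha)$, and conversely every ribbon arises this way (the endpoints $a,b$ together with the sequence of arm lengths recover $\alpha$ up to the obvious symmetry). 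I would record this as a bijection between arcs (up to reversal) and fence posets.

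Next I would verify that under this dictionary $\SP(\alpha)$ is integrally equivalent to the order polytope $\mathscr{O}(P_{\theta(\alpha)})$ of the corresponding fence poset. The natural map sends the ``height'' coordinates $y_i := \sum_{j\le i} x_j$ (partial sums along the hyperplane $\mathbb H$) to the coordinates of the order polytope: the inequalities $\sum_{i\le f}x_i\le 1$ at falls and $\sum_{i\le r}x_i\ge 0$ at rises, together with the sign conditions $x_{a'}\ge 0$, $x_{b'}\le 0$, translate precisely into saying that the partial-sum vector is a monotone $[0,1]$-valued function on the zig-zag pattern, i.e. an element of $\mathscr{O}(P_{\theta(\alpha)})$ after relabeling. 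I would check that this linear change of variables is unimodular (it is, being a triangular partial-sum transformation) so it is an integral equivalence; this is essentially the content of \cite[Theorem~3.3]{benedetti-knauer-valencia}, which I am allowed to cite, so the argument here is mostly bookkeeping to match conventions.

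Finally, I would combine this with the equivalence established earlier in the excerpt: $\mathscr{O}(P)$ is integrally equivalent to a matroid polytope coming from a direct sum of loops, coloops, and snakes precisely when $P$ is a disjoint union of fences and single-element posets, and an indecomposable fence corresponds to a single snake matroid (connected, a lattice path matroid, binary). Since $\theta(\alpha)$ is always a single connected ribbon — an arc gives a connected border strip, not a disjoint union — the associated matroid is exactly one snake, with no loop/coloop summands, and conversely every snake matroid's lattice-path diagram is a single border strip hence of the form $\theta(\alpha)$ for some arc $\alpha$. This yields the claimed ``exactly''. The main obstacle I anticipate is purely notational: reconciling the arc conventions of \cite{padrol-pilaud-ritter} (choice of which of $A,B$ gives rises vs. falls, the off-by-one in the interval $[a,b-1]$, the ambient hyperplane $\mathbb H$) with the border-strip/snake conventions of \cite{benedetti-knauer-valencia} and of Section~\ref{sec:fence posets}, and making sure the reversal symmetry $\alpha\mapsto$ (swap $A,B$, reflect) corresponds to the symmetry of a ribbon under transpose-and-rotate, so that the correspondence is genuinely a bijection and no shard polytope is missed or double-counted.
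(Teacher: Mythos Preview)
Your approach is genuinely different from the paper's, and it establishes something slightly weaker than what the proposition literally asserts.

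The paper's proof is a two-line double citation that does not pass through order polytopes at all: it cites \cite[Theorem~2.2]{knauer-martinez-ramirez0}, which characterizes snake matroids as the graphic matroids of a certain class of series-parallel graphs called \emph{multi-fans}, and then cites \cite[Proposition~72]{padrol-pilaud-ritter}, which shows that shard polytopes are precisely the matroid base polytopes of the graphic matroids of those same multi-fans. Since both classes are identified with the same family of matroid polytopes, they coincide literally (not merely up to integral equivalence).

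Your route instead shows: $\SP(\alpha)$ is integrally equivalent (via the partial-sum change of variables) to the order polytope $\mathscr{O}(P_{\theta(\alpha)})$ of a fence, and then invokes the Benedetti--Knauer--Valencia-Porras theorem to get that this order polytope is in turn integrally equivalent to a snake matroid base polytope. Composing, you obtain that each shard polytope is \emph{integrally equivalent} to some snake matroid polytope, and conversely. That is enough for the downstream corollary on Ehrhart positivity, and indeed the paper remarks immediately after the proposition that this order-polytope route is known via \cite{LACIM_g_vector}. But the proposition as stated claims more: that each shard polytope \emph{is} a matroid base polytope, and that the underlying matroid is a snake. Your argument never verifies that $\SP(\alpha)$ is itself a matroid polytope, so you cannot conclude that ``the associated matroid is exactly one snake'' --- there is no associated matroid until you have established this. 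To close the gap you would need to either identify the vertices of $\SP(\alpha)$ directly as indicator vectors of bases (essentially redoing \cite[Proposition~72]{padrol-pilaud-ritter}), or simply cite that result, at which point the detour through order polytopes becomes unnecessary.
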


\begin{proof}
    A characterization of snake matroids as graphic matroids was found by Knauer, Ram\'irez-Alfons\'in and Mart\'inez-Sandoval in \cite[Theorem~2.2]{knauer-martinez-ramirez0}. These are a special class of series-parallel graphs that they call `multi-fans'. In the original paper by Padrol, Pilaud, and Ritter \cite[Proposition~72]{padrol-pilaud-ritter}, characterizes shard polytopes using the exact same class of series-parallel graphs. 
\end{proof}

The authors in \cite{LACIM_g_vector} also show that shard polytopes of type A are (integrally equivalent to) order polytopes of fences via another family of polytopes called {\em Harder--Narasimhan polytopes} \cite{BKT} coming from finite dimensional quiver representations.

\begin{corollary}\label{coro:shards}
    Shard polytopes of type A are Ehrhart positive.
\end{corollary}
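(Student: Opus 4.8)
The plan is to deduce this directly from results already established, since all the genuine difficulty has been absorbed into the Main Theorem and its ribbon specialization. First I would invoke the preceding proposition, which identifies every type A shard polytope $\SP(\alpha)$ with the base polytope $\mathscr{P}(\M)$ of a snake matroid $\M$ — concretely, the ``multi-fan'' series-parallel graph that \cite[Proposition~72]{padrol-pilaud-ritter} attaches to the arc $\alpha$ is precisely the one that \cite[Theorem~2.2]{knauer-martinez-ramirez0} uses to realize $\M$ graphically. With this identification in place the corollary is essentially immediate: a single snake matroid is in particular a direct sum of snakes, loops, and coloops (one snake summand, and no loops or coloops, since a connected matroid on at least two elements has none), so Corollary~\ref{coro:matroids} applies verbatim and yields that $\SP(\alpha) = \mathscr{P}(\M)$ is Ehrhart positive.

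I would also spell out the route that makes the dependence on Theorem~\ref{thm:positivity_skew} transparent. Using the equivalence theorem above (a reformulation of \cite[Theorem~3.3]{benedetti-knauer-valencia}), $\mathscr{P}(\M)$ is integrally equivalent to the order polytope $\mathscr{O}(P)$ of a fence poset $P$, and because $\M$ is connected with at least two elements, $P$ is a genuine (connected) fence rather than a disjoint union. Integrally equivalent lattice polytopes have the same Ehrhart polynomial, so $\ehr(\SP(\alpha),t) = \ehr(\mathscr{O}(P),t) = \Omega(P;t+1)$ by \eqref{eq:order-ehrhart}. By Corollary~\ref{coro:fences-order-positive} the coefficients of $\Omega(P;t)$ are nonnegative, and substituting $t \mapsto t+1$ and expanding each power by the binomial theorem keeps every coefficient nonnegative; hence $\ehr(\SP(\alpha),t)$ has nonnegative coefficients. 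In fact this argument shows slightly more, namely that $\ehr(\SP(\alpha),t)$ is nonnegative already as a polynomial in $t-1$ — there it equals $\Omega(P;t)$ — mirroring the remark made earlier for snake matroids.

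I do not expect a real obstacle; the only delicate points are bookkeeping. One must make sure the correspondence between arcs and snake matroids (equivalently, between shard polytopes and fence order polytopes) is an \emph{integral}, lattice-preserving equivalence rather than merely an affine or combinatorial one, which is exactly why I would route the argument through the explicit graph models of \cite{padrol-pilaud-ritter} and \cite{knauer-martinez-ramirez0} rather than a soft dimension count; and one must confirm that the fence poset produced is a bona fide (possibly one-element) fence, which is automatic from the connectedness clause in the definition of a snake. Everything else is a citation of Corollary~\ref{coro:matroids} — or, unwinding it, of Corollary~\ref{coro:fences-order-positive} together with the order-to-Ehrhart dictionary \eqref{eq:order-ehrhart}.
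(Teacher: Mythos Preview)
Your proposal is correct and matches the paper's approach exactly: the paper states Corollary~\ref{coro:shards} without an explicit proof, leaving it as an immediate consequence of the preceding proposition (shard polytopes $=$ snake matroid base polytopes) together with Corollary~\ref{coro:matroids}, and alternatively via the integral equivalence with order polytopes of fences and Corollary~\ref{coro:fences-order-positive}. Your write-up simply spells out both of these routes and the bookkeeping the paper leaves to the reader.
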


\section{Schubert shenanigans}\label{sec:schubert}

In this section, whose title is inspired by \cite{stanley2017some}, we go over how a connection to Schubert polynomials and plane partitions of Fomin and Kirillov \cite{FK} and an identity of Macdonald also give the positivity of $\PP_{\lambda}(t-1)$. 

\subsection{Permutations and Schubert polynomials}

We denote permutations $w$ of $\{1,2,\ldots,n\}$ in \emph{one-line notation} $w=w_1w_2\cdots w_n$. Given a positive integer $m$, we denote by $1^m\times w$ the permutation $12\cdots m (m+w_1)(m+w_2)\cdots (m+w_n)$. The symmetric group of permutations of size $n$ is generated by the simple transpositions $s_1,\ldots,s_{n-1}$ where $s_i=(i,i+1)$ is the transposition that exchanges $i$ and $i+1$. A \emph{reduced decomposition} of a permutation $w$ is a factorization of $w$ into a product of generators $w=s_{i_1}s_{i_2}\cdots s_{i_{\ell}}$ with $\ell=\ell(w)$ is minimal. This quantity is called the \emph{length} of $w$. The {\em reduced words} of a permutation $w$ of size $n$ are tuples $(i_1,i_2,\ldots,i_{\ell})$ in $[n-1]^{\ell}$ such that $s_{i_1}s_{i_2}\cdots s_{i_{\ell}}$ is a reduced decomposition of $w$. We denote by $\RW(w)$ the set of reduced words of the permutation $w$. Recall the relations of the simple transpositions: $s_i^2=\iota$, the commutation $s_is_j=s_js_i$ if $|i-j|>1$, and the {\em braid relation} $s_is_{i+1}s_i=s_{i+1}s_is_{i+1}$. A well known fact \cite[Prop. 2.1.6]{ManivelBook} is that all the reduced words can be obtained from one another by a sequence of commutations and braid moves.

To a permutation $w$ of size $n$ we associate its \emph{(Rothe) diagram} $D(w)=\{(i,w_j) \mid i<j, w_i>w_j\}$. The size of this set is the length $\ell(w)$ of the permutation. From this diagram, one can obtain a {\em lexicographically minimal reduced word} $r'$ as follows: for each row $i=1,\ldots,n$ label the cells on row $i$ of the diagram from left to right $i, i+1,\ldots$. Read off the word from the labelled diagram from top to bottom, reading from right to left.

\begin{example} \label{ex: Rothe diagrams and lex min reduced words}
The Rothe diagram of the permutation $w=4231$ is $$D(w)=\{(1,1),(1,2),(1,3),(2,1),(3,1)\}.$$ The lexicographically minimal reduced word of $w$ is $r'=(3,2,1,2,3)$. The Rothe diagram of the permutation $w=461532$ is $$D(w)=\{(1,1),(1,2),(1,3),(2,1),(2,2),(2,3),(2,5),(4,2),(4,3),(5,2)\}.$$ The lexicographically minimal reduced word of $w$ is $r'=(3,2,1,5,4,3,2,5,4,6)$. See Figure~\ref{fig:examples diagrams perms} (left) and (center).   
\end{example}

We define two families of permutations that are of interest. They can also be described in terms of {\em pattern avoidance}. A permutation $w$ is \emph{dominant} (or $132$-avoiding) if the diagram $D(w)$ is a left justified (flushed  left) Young diagram of a partition $\lambda\subseteq \delta_n:=(n-1,n-2,\ldots,1)$; i.e., $D(w)=\{(1,1),\ldots,(1,\lambda_1),\ldots,(r,1),\ldots,(r,\lambda_r)\}$ for a partition $\lambda=(\lambda_1,\lambda_2,\ldots,\lambda_r)\subseteq \delta_n$. The size $n$ and $\lambda\subseteq \delta_n$ uniquely determine the dominant permutation that we denote by $w_{\lambda}(n)$. If $n$ is not specified, we say that $w_{\lambda}$ is a dominant permutation of shape $\lambda$. See Figure~\ref{fig:examples diagrams perms} (left). A permutation is \emph{vexillary} (or $2143$-avoiding) if $D(w)$ is, up to permuting rows and columns, the Young diagram of a partition $\mu(w)$. Given a vexillary permutation $w$, let $\lambda(w)$ be the smallest partition containing the diagram $D(w)$ (this partition can also be defined in terms of the {\em essential set} of $D(w)$). Since $\mu(w)\subset \lambda(w)$, we associate to a vexillary permutation $w$ the skew shape $\lambda(w)/\mu(w)$. See Figure~\ref{fig:examples diagrams perms} (center).  Note that for a dominant $w_{\mu}$ permutation and a positive integer $m$, the permutation $v=1^m\times w_\mu$ is vexillary with associated skew shape $\lambda(v)/\mu(v)$ where $\lambda(v) = ((m+\mu_1)^m, m+\mu_1, m+\mu_2,\ldots,m+\mu_{r})$ and $\mu(v)=\mu$. See Figure~\ref{fig:examples diagrams perms} (right).

\begin{remark} \label{rem:Edelman--Greene}
The number of reduced words of a vexillary permutation $w$ equals the number of 
{\em standard Young tableaux} of shape $\mu(w)$ \cite{StanleyCoxeter} which encode the linear extensions of the cell poset $P_{\mu(w)}$. There is a variant of the well-known Robinson--Schensted--Knuth (RSK) correspondence called the {\em Edelman--Greene correspondence} \cite{EG} between the reduced words of $w$ and these standard tableaux.
\end{remark}


Schubert polynomials were defined by Lascoux and Sch\"utzenberger \cite{LSSchub} to study the intersections of Schubert varieties. We denote by $\mathfrak{S}_w({\bf x})=\mathfrak{S}_w(x_1,\ldots,x_{n-1})$ the \emph{(single) Schubert polynomial} of $w$. See \cite{ManivelBook,MacdonaldSchubBook,billey2025introductioncohomologyflagvariety} for the definition  of this polynomial.

\subsection{A connection with work of Fomin--Kirillov}

The following result of Fomin and Kirillov \cite{FK}  gives an identity for $\PP_{\lambda}(t)$ as a weighted sum over reduced words of the dominant permutation. The result is obtained in part by using Macdonald's identity \cite[Eq. (6.11)]{MacdonaldSchubBook} for the evaluation $\mathfrak{S}_w(1,\ldots,1)$ of a Schubert polynomial of $w$ in terms of reduced words. 

\begin{thm}[{Fomin--Kirillov \cite[Thm.~2.1]{FK}}] \label{thm:FK}
Let $\lambda$ be a partition of $n$ then
\begin{equation}  \label{eq:Macdonald identity PP} \tag{FKM} 
\PP_{\lambda}(t) = \frac{1}{n!} \sum_{r} (t+r_1)\cdots (t+r_n),
\end{equation}
where the sum is over reduced words $r_1\cdots r_n$ of a dominant permutation associated to $\lambda$. In particular both $n!\cdot \PP_{\lambda}(t)$ and $ n!\cdot \PP_{\lambda}(t-1)$ are in $\mathbb{Z}_{\geq 0}[t]$.
\end{thm}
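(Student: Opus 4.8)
The plan is to prove the Fomin--Kirillov identity \eqref{eq:Macdonald identity PP} by identifying both sides as specializations of Schubert polynomials, using two classical ingredients: the fact that for a dominant permutation $w_\lambda$ the Schubert polynomial is the monomial $\Schu_{w_\lambda}(\mathbf{x}) = \mathbf{x}^\lambda = x_1^{\lambda_1} x_2^{\lambda_2}\cdots$, and Macdonald's reduced-word identity, which evaluates the principal specialization (or rather the operator $\partial$-specialization) of $\Schu_w$ at $\mathbf{x}=(1,1,\ldots)$ in terms of $\sum_{r\in RW(w)} \prod_i r_i$ and its $t$-deformations. Concretely, Macdonald's identity states
\[
\sum_{r\in RW(w)} (t+r_1)(t+r_2)\cdots(t+r_{\ell(w)}) = \ell(w)! \cdot \Schu_w(1,1,\ldots,1)\big|_{\text{suitably shifted}},
\]
but the cleaner route is to relate $\PP_\lambda(t)$ directly to the principal specialization of $\Schu_{w_\lambda}$.

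First I would recall the combinatorial identity, due essentially to Stanley and made explicit by Fomin--Kirillov, that for $v = 1^m \times w_\lambda$ with $m$ chosen large (say $m = t$, a positive integer), the number of plane partitions $\PP_\lambda(t)$ with entries at most $t$ equals $\Schu_v(1,\ldots,1)$, where the number of variables is large enough; this comes from the Kohnert/Kraśkiewicz--Pragacz combinatorial model for $\Schu_v$, which for vexillary $v$ counts flagged semistandard tableaux of shape $\lambda(v)/\mu(v)$, and for $v=1^m\times w_\lambda$ this shape and flagging exactly produce the plane-partition count. Then I would invoke Macdonald's identity in the form that expresses this same principal specialization as $\frac{1}{n!}\sum_{r\in RW(w_\lambda)} \prod_i (t+r_i)$ — this is precisely the statement that the ``stable'' principal specialization of a Schubert polynomial, normalized by $n!$, is generated by reduced words with each letter $r_i$ shifted by the evaluation parameter. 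Combining these two identifications gives \eqref{eq:Macdonald identity PP}.

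For the positivity conclusion: each factor $t + r_i$ with $r_i \geq 1$ lies in $\mathbb{Z}_{\geq 1}[t] \subseteq \mathbb{Z}_{\geq 0}[t]$, so each product $\prod_i(t+r_i)$ has nonnegative integer coefficients, hence so does $n!\cdot \PP_\lambda(t) = \sum_r \prod_i(t+r_i)$. For $n!\cdot\PP_\lambda(t-1)$ one uses instead $\Omega(P_\lambda;t) = \PP_\lambda(t-1)$ from \eqref{eq:omega of lam/mu is PP}, and notes that the smallest letter $r_i$ appearing in any reduced word of $w_\lambda$ is at least $1$, so $t - 1 + r_i \in \mathbb{Z}_{\geq 0}[t]$; thus $n!\cdot\PP_\lambda(t-1) = \sum_r \prod_i (t-1+r_i) \in \mathbb{Z}_{\geq 0}[t]$ as well. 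This recovers the straight-shape case of Theorem~\ref{thm:positivity_skew} by an independent, representation-theoretic route.

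The main obstacle is pinning down the precise bookkeeping that translates ``principal specialization of $\Schu_{w_\lambda}$'' into ``$\PP_\lambda(t)$'' — one must be careful about the number of variables, the flagging, and the role of the padding $1^m\times w_\lambda$, and about which version of Macdonald's identity (the $\partial$-operator version versus the $q$-analog) yields exactly the linear-in-$t$ factors $(t+r_i)$ rather than some $q$-deformed expression. I would handle this by working with the stable limit (infinitely many variables / back-stable Schubert polynomials), where the dominant Schubert polynomial is literally the monomial $\mathbf{x}^\lambda$ and the relevant specialization identity is cleanest, and then specializing. Once the dictionary is fixed the positivity is immediate. I would also remark that this gives a second proof of Corollary~\ref{coro:fences-order-positive} only in the straight-shape (hook) sub-case, not the full skew generality, which is why Theorem~\ref{thm:positivity_skew} genuinely requires the inductive Meta Theorem argument.
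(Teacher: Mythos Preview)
The paper does not give its own proof of Theorem~\ref{thm:FK}; it quotes the result from Fomin--Kirillov and uses it as a black box, so there is no in-paper argument to compare against.

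That said, your outline is essentially the original Fomin--Kirillov argument: one identifies $\PP_\lambda(m)$ with the principal specialization $\Schu_{1^m\times w_\lambda}(1,1,\ldots)$ via the flagged-tableau description of vexillary Schubert polynomials, and then applies Macdonald's identity in the shifted form
\[
\Schu_{1^m\times w}(1,1,\ldots)=\frac{1}{\ell(w)!}\sum_{r\in RW(w)}\prod_{i}(m+r_i).
\]
Your positivity deduction is correct and is exactly what the paper means by ``in particular'': since every letter $r_i\geq 1$, each factor $t+r_i$ and $t-1+r_i$ lies in $\mathbb{Z}_{\geq 0}[t]$.

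Two minor cautions. First, the detour through back-stable Schubert polynomials is unnecessary here; the classical (finite-variable) Macdonald identity already handles the shift $1^m\times w$ cleanly, and invoking the stable limit only adds bookkeeping without simplifying the flagging issue you flag as the main obstacle. Second, your closing remark that this gives a second proof of Corollary~\ref{coro:fences-order-positive} ``in the straight-shape (hook) sub-case'' is slightly off: it gives the straight-shape case of Theorem~\ref{thm:positivity_skew} for \emph{all} partitions $\lambda$, not just hooks; fences correspond to ribbon skew shapes, which are genuinely skew and not covered by the Fomin--Kirillov argument at all (this is precisely why the paper needs the Meta Theorem).
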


We will refer to \eqref{eq:Macdonald identity PP} as the Fomin--Kirillov--Macdonald identity.


\begin{ex}
For the hook $\lambda=(3,1,1)$, the smallest associated dominant permutation is $4231$ with reduced words $(3,2,1,2,3),
(3,1,2,3,1),
 (1,3,2,3,1),
 (3,1,2,1,3),
 (1,3,2,1,3),(1,2,3,2,1)
 $.

\begin{figure}[ht]
 \begin{center}
     \includegraphics[scale=0.7]{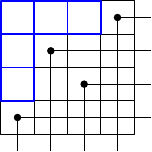}
     \qquad 
     \includegraphics[scale=0.7]{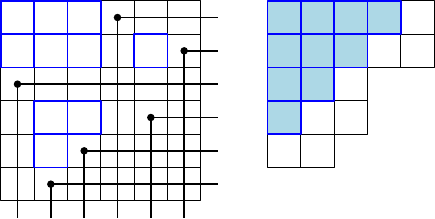}
     \qquad
     \includegraphics[scale=0.7]{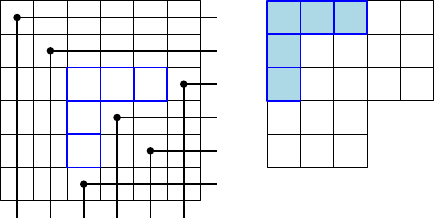}
 \end{center}
 \caption{The diagram of a dominant permutation $w_{311}(4)=4231$ (left), and the diagram and associated skew shape of the vexillary permutations $w=461532$ (center) and of $1^2\times w_{311}(4)=126453$ (right). }
 \label{fig:examples diagrams perms}
 \end{figure}
 
 Then 
 \begin{align*}
 \PP_{\lambda}(t) &= \frac{1}{5!}\left(2(t+1)^2(t+2)^2(t+3) + 4(t+1)^2(t+2)(t+3)^2\right)\\
 &= \frac{1}{120}(6t^5 + 60t^4 + 230t^3 + 420t^2 + 364t + 120). 
 \end{align*}    
\end{ex}




We use the Fomin--Kirillov--Macdonald identity to obtain a more explicit formula for the hook shape $\lambda=(a+1,1^b)$. The positivity of $\PP_{\lambda}(t-1)$ in this case was deduced by Ferroni in \cite[Thm. 1.6]{ferroni_hooks}.

\begin{cor}
For $\lambda=(a+1,1^b)$ with $a\leq b$, we have that 
\begin{align}
\PP_{\lambda}(t) &= \frac{1}{(a+b+1)!} \sum_{i=1}^{a+1} \binom{a}{i-1}\binom{b}{i-1} (t+i)\prod_{j \in [a+1] \setminus i} (t+j) \prod_{j \in [b+1] \setminus i} (t+j), \label{eq: Mac hook 1}\\
&= \frac{(t+a+1)_{a+1} (t+b+1)_{b+1}}{(a+b+1)!}\sum_{i=1}^{a+1} \frac{1}{t+i} \binom{a}{i-1}\binom{b}{i-1}. \label{eq: Mac hook 2}
\end{align}
\end{cor}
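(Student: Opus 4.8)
The plan is to specialize Macdonald's identity \eqref{eq:Macdonald identity PP} from Theorem~\ref{thm:FK} to the hook $\lambda=(a+1,1^b)$ and extract a product formula by understanding the reduced words of the associated dominant permutation. The dominant permutation $w_\lambda$ attached to the hook $(a+1,1^b)$ is the permutation whose Rothe diagram is the hook: concretely, $w_\lambda = (b+2,\,2,\,3,\,\ldots,\,b+1,\,1,\,b+3,\,b+4,\,\ldots)$ truncated appropriately, i.e. the permutation sending $1\mapsto b+2$ and $b+2\mapsto 1$ while fixing everything in between — in other words the "big" transposition that in one-line notation reads $(b+2)\,2\,3\cdots(b+1)\,1\,(b+3)\cdots$, and whose reduced words are in bijection with certain lattice paths. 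Its length is $n = a+b+1$.

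First I would identify the reduced words of $w_\lambda$ explicitly. This permutation is a single large cycle of the form $s_{b+1}s_b\cdots s_1$ conjugated appropriately; its reduced words are well known to be counted by $\binom{a+b}{a}$ and are indexed by interleavings (shuffles) of the decreasing run $b+1, b, \ldots$ coming from the single "descent column" and the increasing run coming from the single "descent row" of the hook. The key computation is that each reduced word $r = r_1\cdots r_n$ of $w_\lambda$, when one forms the product $\prod_{k}(t+r_k)$, contributes a term of a very constrained shape: all the letters used are exactly $\{1,2,\ldots,a+1\}\cup\{1,2,\ldots,b+1\}$ as a multiset, with the value $i$ appearing with a multiplicity that depends only on one "crossing parameter." Summing over all shuffles, the number of reduced words producing a given monomial is a product of two binomial coefficients $\binom{a}{i-1}\binom{b}{i-1}$, which is precisely what appears in the claimed formula. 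Then I would rewrite $(t+i)\prod_{j\in[a+1]\setminus i}(t+j)\prod_{j\in[b+1]\setminus i}(t+j)$ by pulling out the full rising factorials $(t+a+1)_{a+1}=(t+1)(t+2)\cdots(t+a+1)$ and $(t+b+1)_{b+1}=(t+1)\cdots(t+b+1)$, which forces division by the single missing factor $(t+i)$, giving the second displayed line.

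The main obstacle I expect is the combinatorial bookkeeping of the reduced words of $w_\lambda$: one must verify that the reduced words are exactly the shuffles of two monotone sequences and, crucially, that the multiset of letters in a reduced word is rigidly determined so that $\prod_k (t+r_k)$ only depends on a single index $i$, with the enumeration of words giving that product equal to $\binom{a}{i-1}\binom{b}{i-1}$. Concretely, I would use the standard description of $RW(w_\lambda)$ for a dominant permutation whose diagram is a hook: a reduced word corresponds to choosing, for each of the $a$ "horizontal" boxes and $b$ "vertical" boxes of the hook arm/leg, an order of insertion, and the letter recorded at each step is governed by the column/row coordinate shifted by how many horizontal boxes have already been placed. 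Tracking this shift is exactly what produces the $\binom{a}{i-1}\binom{b}{i-1}$ count. Positivity of $\PP_\lambda(t)$ and of $\PP_\lambda(t-1)$ is then immediate since every summand is a product of factors $(t+j)$ with $j\ge 1$, hence lies in $\mathbb{Z}_{\ge 0}[t]$ after clearing the $(a+b+1)!$ denominator; in particular this recovers \cite[Thm.~1.6]{ferroni_hooks}.
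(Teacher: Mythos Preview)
Your approach is the same as the paper's: specialize Macdonald's identity \eqref{eq:Macdonald identity PP} to the hook and organize the reduced words of $w_\lambda$ by a single parameter $i$, obtaining $\binom{a}{i-1}\binom{b}{i-1}$ words for each $i$ and hence the displayed sum. The derivation of the second line from the first is also correct.

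However, your explicit description of $w_\lambda$ is wrong, and following it literally would derail the computation. The permutation you wrote, $(b+2)\,2\,3\cdots(b+1)\,1$, is the transposition swapping $1$ and $b+2$; its Rothe diagram is the \emph{self-conjugate} hook $(b+1,1^b)$ and its length is $2b+1$, not $a+b+1$. In particular it does not depend on $a$ at all, so it cannot be the dominant permutation of $(a+1,1^b)$ unless $a=b$. (For instance, for $\lambda=(2,1,1,1)$ one has $w_\lambda=32451$, which is not a transposition.) Consequently your description of the reduced words as plain shuffles of two fixed monotone runs, and your claim that the multiset of letters is $\{1,\ldots,a+1\}\cup\{1,\ldots,b+1\}$ (a multiset of size $a+b+2$, one too many), are also off.

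The paper's argument makes the bookkeeping transparent: start from the canonical reduced word
\[
(a+1,\,a,\,\ldots,\,2,\,1,\,2,\,3,\,\ldots,\,b+1),
\]
observe that braid moves can only occur around the central position, and let $i\in\{1,\ldots,a+1\}$ be the value of the letter in that central position. Then the letters to the left form the set $[a+1]\setminus\{i\}$ (split into the decreasing block $a+1,\ldots,i+1$ and the increasing block $1,\ldots,i-1$, which pairwise commute), and the letters to the right form $[b+1]\setminus\{i\}$ analogously. Shuffling the two commuting blocks on each side gives exactly $\binom{a}{i-1}\binom{b}{i-1}$ reduced words with that central $i$, and each contributes $(t+i)\prod_{j\in[a+1]\setminus i}(t+j)\prod_{j\in[b+1]\setminus i}(t+j)$ to Macdonald's sum. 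Once you replace your incorrect permutation by this canonical reduced word, your outline becomes a complete proof identical to the paper's.
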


\begin{proof}
By applying the Fomin--Kirillov--Macdonald's identity \eqref{eq:Macdonald identity PP} to the hook $\lambda=(a+1,1^b)$ we obtain that 
\begin{equation} \label{eq: Macdonald hook proof}
    \PP_{\lambda}(t) = \frac{1}{(a+b+1)!} \sum_{r} (t+r_1)\cdots (t+r_{a+b+1}),
\end{equation}
where the sum is over the reduced words of a dominant permutation $w_{\lambda}$, say $w_{\lambda}(b+2)$.
From the Edelman--Greene correspondence (see Remark~\ref{rem:Edelman--Greene}), the number of reduced words of $w_{\lambda}$ equals the number of linear extensions of cell poset $P_{\lambda}$ which is $\binom{a+b}{b}$. Next, we determine the reduced words of this permutation. From the diagram of the permutation we can read the canonical reduced word: \[r':=(a+1,a,\ldots,2,{\bf 1},2,3,\ldots,b+1).\] 
The other reduced words are obtained from $r'$ by repeated application of braid moves and commutations. The braid moves will happen only at positions $a,a+1,a+2$ and will be of the form $(i+1,i,i+1)\to (i,i+1,i)$ for $i=1,\ldots,a$. If the letter in the $(a+1)$th position is $i$, for $i=1,\ldots,a+1$, then the letters on the left of the $(a+1)$th position are  $[a+1]\setminus \{i\}$ and by using commutations, the subword is a shuffle of $a+1,a,\ldots,i+1$ and $1,2,\ldots,i-1$. Similarly, the letters on the right of the $(a+1)$th position are $[b+1]\setminus \{i\}$ and by  using commutations, the subword is a shuffle of $b+1,b,\ldots,i+1$ and $1,2,\ldots,i-1$. Thus, there are $\binom{a}{i-1}\cdot \binom{b}{i-1}$ reduced words where the letter on the $(a+1)$th position is $i$ (and indeed, $\sum_{i=1}^{a+1} \binom{a}{i-1}\binom{b}{i-1} = \binom{a+b}{b}$, the total number of reduced words). The underlying multiset of these reduced words is $[a+1]\setminus \{i\} \cup \{i\} \cup [b+1]\setminus \{i\}$. Thus, their contribution to the sum on the RHS of \eqref{eq: Macdonald hook proof} is 
\[
\binom{a}{i-1}\binom{b}{i-1} (t+i)\prod_{j \in [a+1] \setminus i} (t+j) \prod_{j \in [b+1] \setminus i} (t+j).
\]
By summing this contribution for all the reduced words with letter $i$ in the $(a+1)$th position for $i=1,\ldots,a+1$ we obtain  \eqref{eq: Mac hook 1}. The relation \eqref{eq: Mac hook 2} is obtained from it by factoring out the falling factorials $(t+a+1)_{a+1}\cdot (t+b+1)_{b+1}$.
\end{proof}

We can use Fomin--Kirillov--Macdonald's identity \eqref{eq:Macdonald identity PP} to obtain a concrete expansion for $\PP_{\lambda}(m)$. Let $e_k:=\sum_{i_1<i_2<\cdots< i_k} x_{i_1}x_{i_2}\cdots x_{i_k}$ denote the $i$th elementary symmetric function.

\begin{cor} \label{cor:formulas order poly straight shape from Schub}
For a partition $\lambda$ of size $n$, we have that $\PP_{\lambda}(t-1) = \sum_{i=1}^n c_it^i$ where
\[
c_i = \frac{1}{n!}\sum_{{\bf r} \in \RW(w_{\lambda})} e_{n-i}(r_1-1,\ldots,r_n-1),
\]
where the sum is over all the reduced words ${\bf r}$ of $w_{\lambda}$. 

\end{cor}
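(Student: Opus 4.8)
The plan is to deduce this directly from Macdonald's identity \eqref{eq:Macdonald identity PP} in Theorem~\ref{thm:FK} by a routine expansion of a product of linear factors. First I would recall that every reduced word of the dominant permutation $w(\lambda)$ has length exactly $\ell(w(\lambda)) = |\lambda| = n$, so that for each $\mathbf{r} = r_1\cdots r_n \in RW(w(\lambda))$ the tuple $(r_1-1,\ldots,r_n-1)$ has precisely $n$ entries; hence $e_{n-i}$ evaluated on it is a well-defined homogeneous symmetric polynomial of the correct degree, and the indices are bookkept consistently.

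Next, I would substitute $t \mapsto t-1$ in \eqref{eq:Macdonald identity PP}, obtaining
\[
\PP_{\lambda}(t-1) \,=\, \frac{1}{n!}\sum_{\mathbf{r} \in RW(w(\lambda))} \prod_{j=1}^{n}\bigl(t + (r_j-1)\bigr),
\]
and then apply the standard generating-function identity $\prod_{j=1}^n (t+a_j) = \sum_{k=0}^n e_k(a_1,\ldots,a_n)\,t^{\,n-k}$ with $a_j = r_j-1$; equivalently, the coefficient of $t^i$ in $\prod_{j=1}^n(t+(r_j-1))$ equals $e_{n-i}(r_1-1,\ldots,r_n-1)$. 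Interchanging the two finite sums and collecting the coefficient of $t^i$ gives exactly
\[
[t^i]\,\PP_{\lambda}(t-1) \,=\, \frac{1}{n!}\sum_{\mathbf{r} \in RW(w(\lambda))} e_{n-i}(r_1-1,\ldots,r_n-1) \,=\, c_i,
\]
as claimed. The statement that the sum may be taken over $i=1,\ldots,n$ rather than $i=0,\ldots,n$ follows because $\PP_{\lambda}(-1) = \Omega(P_{\lambda};0) = 0$ for $\lambda\neq\varnothing$ by \eqref{eq:omega of lam/mu is PP}, so $c_0=0$ automatically. I would also note in passing that since each reduced word letter satisfies $r_j\geq 1$, the arguments $r_j-1$ are nonnegative, whence $e_{n-i}(r_1-1,\ldots,r_n-1)\geq 0$; this recovers the positivity assertion of Theorem~\ref{thm:FK} for straight shapes.

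There is no substantive obstacle here: the entire content is Theorem~\ref{thm:FK}, and the corollary is its restatement after the shift $t\mapsto t-1$ together with the expansion of a product of linear forms in terms of elementary symmetric polynomials. The only point requiring (minor) care is tracking that it is $e_{n-i}$, not $e_i$, that records the coefficient of $t^i$, which is pinned down by the degree count in the first paragraph.
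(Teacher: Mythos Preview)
Your proof is correct and follows essentially the same approach as the paper: invoke Theorem~\ref{thm:FK}, shift $t\mapsto t-1$, and expand the product $\prod_j (t+r_j-1)$ via the identity $\prod_{j=1}^n(t+a_j)=\sum_{i=0}^n e_{n-i}(a_1,\ldots,a_n)\,t^i$. Your added remarks on why $c_0=0$ and on the nonnegativity of the $e_{n-i}$ are helpful elaborations but not departures from the paper's argument.
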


\begin{proof}
The result follows from the Fomin--Kirillov--Macdonald identity \eqref{eq:Macdonald identity PP} and the identity (see \cite[Section 7.6]{EC2}),
\[
(t+r_1-1)\cdots (t+r_n-1) = \sum_{i=0}^n t^{i}e_{n-i}(r_1-1,\ldots,r_n-1).\qedhere
\]
\end{proof}

\begin{rem}
Combining the formula for $c_1(P_{\lambda})$ in Proposition~\ref{prop:skew_coeff_1} and Corollary~\ref{cor:formulas order poly straight shape from Schub}, we obtain the following curious identity: for a partition $\lambda=(\lambda_1,\ldots,\lambda_{\ell})$ of $n$ we have that 
\begin{equation}
\frac{1}{n!}\sum_{{\bf r} \in \RW_1(w_{\lambda})} \prod_{j\neq k} (r_j-1) = 
\frac{(\lambda_1-1)!(\ell-1)!}{(\lambda_1-1+\ell)!},
\end{equation}
where $\RW_1(w_\lambda)$ are the reduced words of $w_\lambda$ with exactly one index $k$ such that $i_k=1$.
\end{rem}




Next, we inquire about the possibility of having a Fomin--Kirillov--Macdonald type formula for the order polynomial of skew plane partitions. 

\begin{defn}
We say that a polynomial $f \in \mathbb{Z}_{\geq 0}[t]$ \emph{has a Fomin--Kirillov--Macdonald-type formula} for a set $S$, denoted as $F \in M[S]$ if $f$ can be written as a sum of terms $\prod_{i=1}^{\deg(f)} (t+a_i)$ where $a_i \in S$.
\end{defn}

As follows from the above discussion, the order polynomial for straight shapes $\lambda$ is in $M[S]$ where $S=\{0,\ldots,\lambda_1-1\}$. We will now explore the possibility of such formula for skew shapes.

Let $\zeta_n$ be the zig-zag skew shape with $n$ boxes from equation~\eqref{eq:zigzag skew shape} so that $Z_n = P_{\zeta_n}$. Let $S_n=\{0,\ldots,n\}$
We have that the order polynomials normalized by $|\la/\mu|!$ in \eqref{eq:ex order poly zig-zags} can be expressed as follows: 

\[
n!\Omega(Z_n;t) = \begin{cases}
    t(t+1) & n = 2,\\
    t^2(t+1) + t(t+1)^2  & n = 3,\\
    t(t+1)^2(t+2) + 2t^3(t+1) + 2t^2(t+1)^2 & n = 4,\\
    4t(t+1)^4 + 4t^2(t+1)^3 + 4t^4(t+1) + 4t^3(t+1)^2 & n = 5,\\
    12t(t + 1)^5 + t^2(t + 1)^2(t + 2)^2 + 33t^3 (t + 1)^3 + 12 t^5(t+1) + 
 3 t^4 (t + 1)^2 & n = 6,\\
    \text{etc.} & 
\end{cases}
\]

In other words, for $n \in \{2,3\}$ this polynomial lies in $M[S_1]$, and for $n\in \{4,5,6\}$ it lies in $M[S_2]$. However, it is easy to see (also experimentally) that $4!\cdot \Omega(Z_4;t) \not \in M[S_1]$. 
Notice that these expressions are not unique for larger values of $n$. 

\begin{conj}\label{conj:expansion}
    The order polynomials $|\lambda/\mu|! \cdot \Omega(\lambda/\mu;t)$ for $\la_1\geq \ell(\la)$ are in $M[S_{\lambda_1}]$ for all such $\lambda,\mu$, i.e., there is a multiset of compositions  of $|\lambda/\mu|$ denoted $R(\lambda/\mu)$, such that
    $$|\lambda/\mu|! \cdot \Omega(P_{\lambda/\mu};t) = \sum_{a \in R(\lambda/\mu) } \prod_{i=0}^{\lambda_1} (t+i)^{a_i}$$
\end{conj}

Ultimately, we leave open how to extend this Schubert approach to show the positivity of $\PP_{\lambda}(t-1)$. This highlights the novelty of the approach in the previous section to derive such a positivity result.

\section{Other classes of posets with nonnegative order polynomials?}\label{sec:other}

Here we consider several classes of partially ordered sets, which (experimentally) seem to exhibit order polynomials with nonngeative coefficients. These classes of posets are also naturally closed under taking upper and lower order ideals, so Theorem~\ref{thm:meta_positivity} would apply to them and so all we would need to prove is that $c_1(P) \geq 0$. 

\subsection{Width two posets}

A poset has width two if the longest antichain has two elements. Such posets are an interesting testing ground for conectures regarding linear extensions of posets like the {\em $\frac13$ -- $\frac23$ conjecture} \cite{LinialW2} and the {\em cross-product conjecture} \cite{ChanPakPanovaW2}. A width two poset can be viewed as two chains $\alpha_1 \succ \alpha_2 \succ\cdots\succ\alpha_m$, $\beta_n\succ\beta_{n-1}\succ\cdots\succ\beta_1$, and relations $\alpha_i \succ \beta_j$ or $\beta_j\succ\alpha_i$. Such width two posets $P$ are in correspondence with Young diagrams of a skew shape $\lambda/\mu$ inside the $m\times n$ rectangle \cite[\S 8]{ChanPakPanovaW2}, where it is phrased in the language of NE lattice paths. Namely, the cells $(i,j)$ of the Young diagram consist of incomparable pairs $(\alpha_i,\beta_j)$.  By abuse of notation, we denote the width two poset corresponding to a skew shape $\lambda/\mu\subset [m]\times [n]$ by $R_{\lambda/\mu}$. See Figure~\ref{fig:width 2 poset} (left). In \cite[Lemma 8.1]{ChanPakPanovaW2}, Chan--Pak--Panova found a correspondence between {\em linear extensions} of the width two poset $R_{\lambda/\mu}$ and NE lattice paths within the shape, which naturally corresponds to plane partitions of shape $\lambda/\mu$ with entries $0,1$. The latter are of course counted by $\PP_{\lambda/\mu}(1)$. We refer to a recent paper by Alexandersson and Jal \cite{alexandersson-jal} for a connection with rook placements on Ferrers boards and lattice path matroids.

\begin{figure}[ht]
\centering
\begin{tikzpicture}[scale=0.4,auto=center,every node/.style={circle,scale=0.7, fill=black, inner sep=2.7pt}] 
	\tikzstyle{edges} = [thick];
   \node (A1) at (0,0) [label=left:{\textcolor{red}{2} $\alpha_1$ }] {};
    \node (A2) at (0,2) [label=left:{\textcolor{red}{3} $\alpha_2$ }] {};
    \node (A3) at (0,4) [label=left:{\textcolor{red}{6} $\alpha_3$ }] {};
    \node (A4) at (0,6) [label=left:{\textcolor{red}{9} $\alpha_4$ }] {};

    \node (B1) at (4,8) [label=right:{$\beta_1$ \textcolor{red}{8}}] {};
    \node (B2) at (4,6) [label=right:{$\beta_2$ \textcolor{red}{7}}] {};
    \node (B3) at (4,4) [label=right:{$\beta_3$ \textcolor{red}{5}}] {};
    \node (B4) at (4,2) [label=right:{$\beta_4$ \textcolor{red}{4}}] {};
    \node (B5) at (4,0) [label=right:{$\beta_5$ \textcolor{red}{1}}] {};
    
    \draw (A1) -- (A2);
    \draw (A2) -- (A3);
    \draw (A3) -- (A4);
    
    \draw (B1) -- (B2);
    \draw (B2) -- (B3);
    \draw (B3) -- (B4);
    \draw (B4) -- (B5);

    \draw (A1) -- (B2);
    \draw (A2) -- (B1);
    \draw (A3) -- (B4);
    \draw (A2) -- (B5);
\end{tikzpicture}
\qquad  
\includegraphics[scale=0.9]{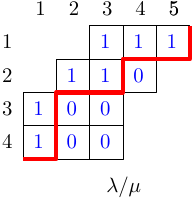}
\qquad   
\begin{tikzpicture}[scale=0.4,auto=center,every node/.style={circle,scale=0.7, fill=black, inner sep=2.7pt}] 
	\tikzstyle{edges} = [thick];
   \node (A1) at (0,1) [label=left:{$\alpha_1$ }] {};
    \node (A2) at (0,3) [label=left:{ $\alpha_2$ }] {};
    \node (A3) at (0,5) [label=left:{ $\alpha_3$ }] {};
    \node (A4) at (0,7) [label=left:{ $\alpha_4$ }] {};

    \node (B1) at (4,8) [label=right:{$\beta_1$} ] {};
    \node (B2) at (4,6) [label=right:{$\beta_2$} ] {};
    \node (B3) at (4,4) [label=right:{$\beta_3$ }] {};
    \node (B4) at (4,2) [label=right:{$\beta_4$ }] {};
    \node (B5) at (4,0) [label=right:{$\beta_5$ }] {};
    
    \draw (A1) -- (A2);
    \draw (A2) -- (A3);
    \draw (A3) -- (A4);
    
    \draw (B1) -- (B2);
    \draw (B2) -- (B3);
    \draw (B3) -- (B4);
    \draw (B4) -- (B5);

    \draw (A1) -- (B3);
    \draw (A2) -- (B2);
    \draw (A3) -- (B1);

    \draw (B5) -- (A2);
    \draw (B4) -- (A3);
    \draw (B3) -- (A4);

    \draw[dotted] (A1) -- (B5);
    \draw[dotted] (A1) -- (B4);
    \draw[dotted] (A2) -- (B4);
    \draw[dotted] (A2) -- (B3);
    \draw[dotted] (A3) -- (B3);
    \draw[dotted] (A3) -- (B2);
    \draw[dotted] (A4) -- (B2);
    \draw[dotted] (A4) -- (B1);
\end{tikzpicture}
\qquad 
\includegraphics[scale=0.9]{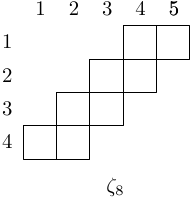}

\caption{A width two poset and its associated skew shape $\lambda/\mu=5433/21$ (left). The poset has  a linear extension in \textcolor{red}{red} and its associated plane partition with entries $0,1$ (\textcolor{blue}{blue}) using the correspondence from \cite[\S 8]{ChanPakPanovaW2}. The complement zigzag $\overline{Z}_9$ and its associated skew shape $\zeta_8=5432/321$ (right).}
\label{fig:width 2 poset}
\end{figure}

Based on calculations using {\tt SageMath} \cite{sagemath} for width two posets with chains of sizes $m,n \leq 5$, it appears that the order polynomial of a width two poset has  nonnegative coefficients.

\begin{conj}\label{conj:width 2}
The order polynomial of a width two poset $R_{\lambda/\mu}$ encoded by skew shape $\lambda/\mu\subset [m]\times [n]$ has nonnegative coefficients, i.e., $(m+n)! \cdot \Omega(R_{\lambda/\mu};t) \in \mathbb{Z}_{\geq 0}[t]$. 
\end{conj}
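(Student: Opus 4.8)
The natural line of attack is the Meta Theorem (Theorem~\ref{thm:meta_positivity}). The family $\{R_{\lambda/\mu}\}$ is closed under taking lower and upper order ideals: an order ideal of $R_{\lambda/\mu}$ is a prefix $\beta_1\prec\cdots\prec\beta_{j_0}$ of the increasing chain together with a suffix $\alpha_{i_0+1}\succ\cdots\succ\alpha_m$ of the decreasing chain, compatibly with the cross relations, and after reindexing this is again a poset $R_{\nu/\mu'}$ attached to the sub-skew-shape of $\lambda/\mu$ obtained by deleting the first $i_0$ rows and retaining the first $j_0$ columns, inside $[m-i_0]\times[j_0]$; the complementary filter is dual. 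Hence, by the Meta Theorem, it suffices (and is in fact equivalent) to prove $c_1(R_{\lambda/\mu}):=[t^1]\,\Omega(R_{\lambda/\mu};t)\ge 0$ for all skew shapes $\lambda/\mu$, which then yields nonnegativity of every coefficient and integrality of $(m+n)!\cdot\Omega(R_{\lambda/\mu};t)$.

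To reach the linear coefficient I would use the $P$-partition description of the order polynomial. An order preserving map $R_{\lambda/\mu}\to[1,t]$ is a weakly decreasing sequence $t\ge a_1\ge\cdots\ge a_m\ge1$ together with a weakly increasing sequence $1\le b_1\le\cdots\le b_n\le t$ subject to $a_i\ge b_{\mu_i}$ and $a_i\le b_{\lambda_i+1}$ for every $i$ (the constraint being vacuous when $\mu_i=0$ or $\lambda_i=n$), all remaining cross inequalities being automatic. This describes a pair of monotone lattice paths confined between a ``floor'' cut out by $\mu$ and a ``ceiling'' cut out by $\lambda$ --- equivalently, a monotone sequence of lattice points in the skew-shaped region realizing $J(R_{\lambda/\mu})$ --- so a Lindstr\"om--Gessel--Viennot- or transfer-matrix argument should produce a determinant of binomial coefficients for $\Omega(R_{\lambda/\mu};t)$, a width two counterpart of the Kreweras and Gessel--Krattenthaler formulas used earlier. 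Granting such a formula, I would extract $c_1$ exactly as in the proof of Proposition~\ref{prop:skew_coeff_1}: the entries of the determinant are individually divisible by $t$ in a controlled pattern, so a single term of the Leibniz expansion contributes to the coefficient of $t$, and that term should evaluate to a ratio of factorials of the form $\frac{a!\,b!}{(a+b+1)!}$ (with $a,b$ read off from $m$, $n$, and the extreme parts of $\lambda,\mu$) when $R_{\lambda/\mu}$ is connected, and to $0$ when it splits as a nontrivial direct sum --- nonnegative in either case.

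I expect the determinantal identity to be the main obstacle: there is no ready-made Kreweras-type formula in this setting, so one must set up the paths with the correct endpoints encoding both $\mu$ and $\lambda$ and then work through the usual degeneracies (empty rows, disconnected shapes). As a fallback that avoids determinants, one can use $\Omega(P;t)=\sum_{\pi\in\mathcal L(P)}\binom{t+N-1-\mathrm{des}(\pi)}{N}$ for a natural labeling of a poset $P$ on $N$ elements, which gives
\[
c_1(P)=\frac{1}{N!}\sum_{\pi\in\mathcal L(P)}(-1)^{\mathrm{des}(\pi)}\,\mathrm{des}(\pi)!\,\bigl(N-1-\mathrm{des}(\pi)\bigr)!,
\]
and then invoke the Chan--Pak--Panova bijection \cite{ChanPakPanovaW2} between linear extensions of $R_{\lambda/\mu}$ and NE lattice paths in the shape (and the statistic it induces on $\mathrm{des}$) to prove that this sign-alternating sum is nonnegative --- the sign alternation being precisely what makes this delicate. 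Once $c_1(R_{\lambda/\mu})\ge0$ is established for all skew shapes, Theorem~\ref{thm:meta_positivity} closes the argument.
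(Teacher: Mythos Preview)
The statement you are attempting to prove is \emph{Conjecture}~\ref{conj:width 2}: the paper does not contain a proof of it, only computational evidence for chains of length $\le 5$ and the observation that, since width two posets are closed under lower and upper ideals, Theorem~\ref{thm:meta_positivity} reduces the problem to showing $c_1(R_{\lambda/\mu})\ge 0$. Your opening paragraph reproduces exactly this reduction, so up to that point you are in complete agreement with the paper.

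Where your proposal goes beyond the paper is in sketching two possible routes to $c_1\ge 0$, but neither is a proof. The determinantal approach hinges on a Kreweras-type formula for $\Omega(R_{\lambda/\mu};t)$ that does not currently exist; you acknowledge this yourself (``there is no ready-made Kreweras-type formula in this setting''), and there is no evident reason to expect a single-determinant LGV identity here, since the order-preserving maps on $R_{\lambda/\mu}$ are not naturally encoded as families of nonintersecting lattice paths with fixed endpoints in the way they are for $P_{\lambda/\mu}$. Your fallback descent formula for $c_1(P)$ is correct, but the sign-alternating sum it produces is precisely the difficulty of the conjecture; invoking the Chan--Pak--Panova bijection does not by itself control the signs, and you stop short of any argument that would. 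In short, the reduction step is fine and matches the paper, but the heart of the matter---the nonnegativity of $c_1$---remains open, both in your proposal and in the paper.
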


The class of width two posets is closed under taking lower and upper ideals and so Theorem~\ref{thm:meta_positivity} can be applied. Thus it is enough to prove that $c_1(R_{\lambda/\mu})$ is always nonnegative.

\begin{remark}
In addition to computational evidence, by Theorem~\ref{thm:main-positivity_skew}, the above conjecture holds for width two posets that are isomorphic to $P_{\lambda/\mu}$, i.e., for skew shapes $\lambda/\mu=(a+b+c,a+b)/(a)$, and hooks $\lambda/\mu=(a+1,1^b)$.
\end{remark}

\begin{remark}
An interesting width two poset is the {\em complement zigzag} $\overline{Z}_n$ from \cite[Example 10]{EulerFib} which has elements $x_1,\ldots,x_n$ and covering relations $x_i \preceq x_{i+2}$ for $i=1,\ldots,n-2$ and $x_i \preceq x_{i+3}$ for $i=1,\ldots,n-3$. See Figure~\ref{fig:width 2 poset} (right). One can see that $\overline{Z}_n \cong R_{\zeta_{n-1}}$ and that the leading term of $n!\cdot \Omega(\overline{Z}_n;t)$ is the $n$th Fibonacci number $F_n$. We have verified that the order polynomial of $\overline{Z}_n$ has nonnegative coefficients for $n\leq 200$.
\end{remark}

\subsection{Cylindric partitions}

A cylindric plane partition of skew diagram $\lambda/\mu/d$ is a plane partition $T$ that remains a plane partition if the last row, shifted by $d$ units to the right, is placed on top of $\pi$. See Figure~\ref{fig:ex cylindric plane partition}. These plane partitions were introduced by Gessel and Kratthenthaler in \cite{GesselKrattenthaler} and are related to affine Schubert calculus \cite{PostnikovCylindric}, {\em periodic Schur processes} \cite{BorodinCylindric}, {\em Rogers--Ramanujan identities} of integer partitions \cite{CorteelDousseAli}, and chromatic symmetric functions \cite{siegl2022cylindricptableaux31freeposets} among others. The sequence of nonnegative integers $\lambda=(\lambda_1,\ldots,\lambda_\ell)$, $\mu=(\mu_1,\ldots,\mu_\ell)$ need to satisfy the following condition
\begin{align} \label{eq: cylindric conditions lambda mu}
\lambda_1 &\geq \lambda_2-1 \geq \lambda_3-2 \geq \cdots \geq \lambda_\ell - (\ell-1) \geq \lambda_1 -d-\ell,\\
\mu_1 &\geq \mu_2-1 \geq \mu_3-2 \geq \cdots \geq \mu_{\ell}- (\ell-1) \geq \mu_1 -d-\ell. \notag
\end{align}

\begin{figure}[ht]
    \includegraphics[scale=0.9]{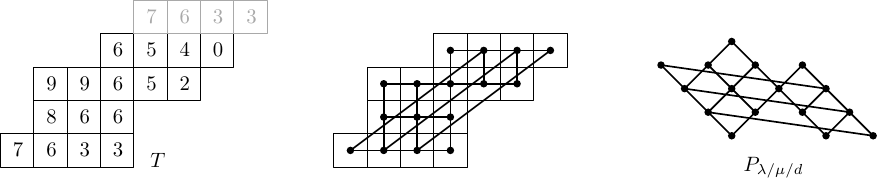}
    \caption{An illustration of a cylindric plane partition of shape $\lambda/\mu/d=7644/311/4$ and the Hasse diagram of the associated poset $P_{\lambda/\mu/d}$.}\label{fig:ex cylindric plane partition}
\end{figure}

For $t$ in $\mathbb{Z}_{\geq 0}$, let $\CPP_{\lambda/\mu/d}(t)$ be the number of cylindric plane partitions of shape $\lambda/\mu/d$ with entries $0,\ldots,t$. Let $P_{\lambda/\mu/d}$ be the underlying cell poset of a cylindric partition of shape $\lambda/\mu/d$. That is,  $P_{\la/\mu/d}$ is the poset whose elements are the boxes $X(\la/\mu) = \{ (i,j): i \in [\ell], j \in [\mu_i+1,\la_i]\}$ and covering relations given by $(i,j) \succeq (i+1,j)$, $(i,j) \succeq (i,j+1)$, as well as  $(1,\mu_1+d+k)\succeq (\ell, \mu_{\ell}+k)$ for $k=1,\ldots,\min(\lambda_1-d,\mu_{\ell})$.

Note that $\Omega(P_{\lambda/\mu/d};t)=\CPP_{\lambda/\mu/d}(t-1)$. In \cite{GesselKrattenthaler}, Gessel and Krattenthaler found the following formula for $\CPP_{\lambda/\mu/d}(t)$ as a sum of Kreweras type determinants.

\begin{theorem}[{Gessel--Krattenthaler \cite[Thm. 2]{GesselKrattenthaler}}] \label{thm:GesselKrattCylPP}
For a cylindric skew shape $\lambda/\mu/d$ for sequences of nonnegative integers $\lambda$ and $\mu$ satisfying \eqref{eq: cylindric conditions lambda mu} we have that 
\begin{equation}\label{eq:gk}
\Omega(P_{\lambda/\mu/d};t) = \sum_{(k_1,\ldots,k_{\ell})} \det\left[ \binom{t-1 + \lambda_i-\mu_j -dk_i }{\lambda_i-\mu_j-i+j-(\ell+d)k_i}
\right]_{i,j=1}^{\ell},
\end{equation}
where the sum is over tuples of integers $(k_1,\ldots,k_{\ell})$ such that $k_1+\cdots + k_{\ell}=0$.
\end{theorem}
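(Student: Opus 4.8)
The plan is to realize cylindric plane partitions as families of non-intersecting lattice paths drawn on a cylinder, and then apply the Lindström--Gessel--Viennot (LGV) lemma on the universal cover of that cylinder; the sum over the winding data $(k_1,\dots,k_\ell)$ records the different ways a non-intersecting family can wrap around. First I would fix $\ell$ and $d$ and introduce the period vector $v$ of the cylindric shape $\la/\mu/d$, so that its diagram sits naturally on the quotient cylinder $\mathcal{C}:=\mathbb{Z}^2/\mathbb{Z}v$. Using the standard translation between (skew) plane partitions and non-intersecting monotone lattice paths---reading the rows of $T$ as lattice paths whose heights record the entries of $T$---one identifies a cylindric plane partition of shape $\la/\mu/d$ with entries in $\{0,\dots,t\}$ with a family of $\ell$ pairwise non-intersecting monotone paths on $\mathcal{C}$, the $i$-th path running from a source $U_i$ determined by $\mu_i$ and $i$ to a sink $V_i$ determined by $\la_i$ and $i$, all confined to a region whose ``height'' is governed by $t$. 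The cylindric compatibility conditions \eqref{eq: cylindric conditions lambda mu} are exactly what make this identification well defined on $\mathcal{C}$ rather than merely on an infinite strip.

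Next I would unfold along $\mathbb{Z}^2\to\mathcal{C}$: a non-intersecting family on $\mathcal{C}$ lifts to a $v$-periodic non-intersecting family on $\mathbb{Z}^2$, and conversely one recovers the cylinder configuration by choosing a fundamental domain. In that domain the $i$-th lifted path starts at a fixed lift $\widetilde{U}_i$ of $U_i$ and ends at a lift of $V_{\pi(i)}$ for some $\pi\in\mathfrak{S}_\ell$, the endpoint differing from the base lift $\widetilde{V}_{\pi(i)}$ by $k_i$ copies of $v$. Two things must be pinned down here. First, a conservation law forces the total winding $k_1+\dots+k_\ell$ to equal a fixed value determined by the shape, which for the normalization in \eqref{eq: cylindric conditions lambda mu} is $0$. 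Second, the sign-reversing involution of the LGV lemma on the cover---swapping the tails of two paths at their first crossing---must be checked to commute with the deck transformation $x\mapsto x+v$, so that it descends to a well-defined involution on crossings on $\mathcal{C}$ and collapses, for each fixed total winding, the signed sum over matchings $\pi$ to the desired determinant.

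For each fixed tuple $(k_1,\dots,k_\ell)$ with $\sum_i k_i=0$, the LGV lemma on the simply connected space $\mathbb{Z}^2$ then evaluates the signed count as $\det\big[\, p(\widetilde{U}_i\to\widetilde{V}_j+k_i v)\,\big]_{i,j=1}^{\ell}$, where $p(\cdot\to\cdot)$ counts single bounded monotone lattice paths between two lattice points. A direct count---a reflection/ballot argument, or just counting lattice paths in a box whose dimensions are assembled from $\la_i-\mu_j$, the diagonal offset $j-i$, the shift parameters $dk_i$ and $(\ell+d)k_i$, and the height bound $t$---should yield
\[
p(\widetilde{U}_i\to\widetilde{V}_j+k_i v)\;=\;\binom{t-1+\la_i-\mu_j-dk_i}{\la_i-\mu_j-i+j-(\ell+d)k_i},
\]
with the convention that the binomial is $0$ when the lower index is negative. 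Summing over all admissible tuples---which index pairwise disjoint sets of cylinder configurations that together exhaust all of them---gives \eqref{eq:gk}. I would then observe that the sum is finite: choosing $B$ large enough that $k_i>B$ forces the $i$-th row of the matrix to vanish identically (possible since $\la_i-\mu_j+j-i$ is bounded over $j$ while $(\ell+d)k_i\to\infty$ when $\ell+d>0$), any nonzero term has all $k_i\le B$ and hence, by $\sum k_i=0$, all $k_i\ge-(\ell-1)B$, leaving finitely many tuples.

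The hard part will be the winding-number bookkeeping of the second step: proving that the admissible set is \emph{exactly} $\{\sum k_i=0\}$, that the LGV involution on the cover is equivariant under $x\mapsto x+v$ and hence descends correctly to $\mathcal{C}$, and that the choice of fundamental domain introduces neither over- nor undercounting. Once this ``cylindric LGV lemma'' is correctly set up, the single-path binomial count and the finiteness argument are routine; an alternative, more algebraic route to the same identity is to feed non-intersecting paths on a cylinder through a transfer-matrix computation (equivalently, a cylindric Cauchy-type identity), but the combinatorial LGV argument is the most transparent.
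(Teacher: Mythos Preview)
The paper does not give its own proof of this statement: Theorem~\ref{thm:GesselKrattCylPP} is quoted verbatim as \cite[Thm.~2]{GesselKrattenthaler} and used as a black box in the proof of Proposition~\ref{prop:GK for border strips} and Theorem~\ref{thm:circular-fences-order-positive}. So there is nothing in the present paper to compare your argument against.

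That said, your outline is essentially the argument Gessel and Krattenthaler themselves use: encode cylindric plane partitions as non-intersecting path families on a cylinder, lift to the universal cover, and apply LGV on $\mathbb{Z}^2$ with winding data $(k_1,\dots,k_\ell)$ indexing the lifts of the sinks. You have correctly identified the genuinely delicate point, namely that the tail-swapping involution must be set up so that it respects the $\mathbb{Z}v$-periodicity and that the constraint $\sum_i k_i=0$ is exactly the admissible range; this is where a careless write-up can go wrong, and in your sketch it is flagged but not carried out. The single-path binomial identification and the finiteness argument are routine, as you say. If you intend to include a full proof rather than a citation, the main work remaining is to make the ``cylindric LGV'' step precise; otherwise, citing \cite{GesselKrattenthaler} as the paper does is entirely appropriate.
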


\begin{remark}
The number of terms in the RHS of \eqref{eq:gk} is finite because, in order to have a nonvanishing determinant, in every row of the matrix we need to have at least one nonzero entry. Thus for $i=1,\ldots,\ell$ we have that $k_i \leq (\lambda_i-\mu_j -i+j)/(\ell+d)$ for some $j$ and subsequently that $k_i \geq -(\ell-1)(\lambda_i-\mu_j -i+j)/(\ell+d)$ for some $j$.    
\end{remark}

When the skew shape $\lambda/\mu$ is a ribbon or border strip and $d=\lambda_1-1$, the shape $\lambda/\mu/d$ is called a {\em closed ribbon}, and the poset $P_{\lambda/\mu/d}$ is called {\em circular fence poset}, see Figure~\ref{fig:closed zig-zag} for an example. These posets played an instrumental role in the proof by Kantarc{\i}\ O{\u g}uz and Ravichandran
\cite{oguz-ravichandran} mentioned in Section~\ref{sec:fence posets}. 

\begin{figure}[ht]
    \centering
    \begin{tikzpicture}  
	[scale=0.6,auto=center,every node/.style={circle,scale=0.8, fill=black, inner sep=2.7pt}] 
	\tikzstyle{edges} = [thick];
	
	\node[] (a0) at (-1,1) {};  
    \node[] (a1) at (0,0) {};  
	\node[] (a2) at (1,-1) {};  
	\node[] (a3) at (2,0) {};  
    \node[] (a4) at (3,1) {};  
	\node[] (a5) at (4,2) {};  
	\node[] (a6) at (5,1) {};  
	\node[] (a7) at (6,0) {};  

    \draw[edges] (a0) -- (a1);
    \draw[edges] (a1) -- (a2);
	\draw[edges] (a2) -- (a3);
	\draw[edges] (a3) -- (a4);
	\draw[edges] (a4) -- (a5);
	\draw[edges] (a5) -- (a6);
	\draw[edges] (a6) -- (a7);
	\draw[edges] (a0) -- (a7);
	\end{tikzpicture}
    \qquad
	\begin{tikzpicture}  
	[scale=0.6,auto=center,every node/.style={circle,scale=0.8, fill=black, inner sep=2.7pt}] 
	\tikzstyle{edges} = [thick];
	
	\node[] (a0) at (-1,1) {};  
    \node[] (a1) at (0,0) {};  
	\node[] (a2) at (1,1) {};  
	\node[] (a3) at (2,0) {};  
    \node[] (a4) at (3,1) {};  
	\node[] (a5) at (4,0) {};  
	\node[] (a6) at (5,1) {};  
	\node[] (a7) at (6,0) {};  

    \draw[edges] (a0) -- (a1);
    \draw[edges] (a1) -- (a2);
	\draw[edges] (a2) -- (a3);
	\draw[edges] (a3) -- (a4);
	\draw[edges] (a4) -- (a5);
	\draw[edges] (a5) -- (a6);
	\draw[edges] (a6) -- (a7);
	\draw[edges] (a0) -- (a7);
	\end{tikzpicture}
    \qquad \qquad \includegraphics{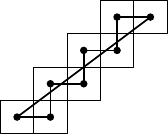}
    \caption{The Hasse diagram of a circular fence poset. The Hasse diagram of the circular zig-zag poset $\widehat{Z}_8$
    and the associated cylindric skew shape $5432/321/4$.
    }\label{fig:closed zig-zag}
\end{figure}

In the next result, we do a careful analysis of Theorem~\ref{thm:GesselKrattCylPP} to show that order polynomials of circular fence posets indeed have nonnegative coefficients. We start with a more compact expression for circular fences.

\begin{proposition} \label{prop:GK for border strips}
For a circular fence poset $P_{\lambda/\mu/\lambda_1-1}$, we have that
\begin{multline} \label{eq:sum_dets_ribbons}
\Omega(P_{\lambda/\mu/\lambda_1-1};t) \,=\, \Omega(P_{\lambda/\mu};t) + \\
+ \sum_{m=2}^n \det\left[ \binom{t-1 + \lambda_i-\mu_j - (\lambda_1-1)(\delta_{i,1}-\delta_{i,m})}{\lambda_i-\mu_j -i +j - (\ell + \lambda_1-1)(\delta_{i,1}-\delta_{i,m})}\right]_{i,j=1}^{\ell},   
\end{multline}
where $\delta_{a,b}$ is the Kronecker delta.
\end{proposition}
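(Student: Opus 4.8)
The plan is to specialize the Gessel--Krattenthaler expansion \eqref{eq:gk} of Theorem~\ref{thm:GesselKrattCylPP} to the case $d=\lambda_1-1$, with $\ell=\ell(\lambda)$, and to show that all but a few of the summation tuples $(k_1,\dots,k_\ell)$ (those with $k_1+\cdots+k_\ell=0$) contribute a determinant that vanishes identically. Concretely, I would prove that the only surviving tuples are the zero tuple $(0,\dots,0)$ and the tuples given by $k_i=\delta_{i,1}-\delta_{i,m}$ for $m=2,\dots,\ell$. Granting this, the zero tuple contributes $\det\bigl[\binom{t-1+\lambda_i-\mu_j}{\lambda_i-\mu_j-i+j}\bigr]_{i,j=1}^{\ell}$, which is Kreweras's determinant (Proposition~\ref{Kreweras_form}) evaluated at $t-1$, hence equals $\PP_{\lambda/\mu}(t-1)=\Omega(P_{\lambda/\mu};t)$ by \eqref{eq:omega of lam/mu is PP}; and substituting $k_i=\delta_{i,1}-\delta_{i,m}$, $d=\lambda_1-1$, $\ell+d=\ell+\lambda_1-1$ into the summand $\binom{t-1+\lambda_i-\mu_j-dk_i}{\lambda_i-\mu_j-i+j-(\ell+d)k_i}$ yields exactly the $m$-th determinant in \eqref{eq:sum_dets_ribbons}. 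Adding up the contributions then gives the stated identity. I would work with the standard normalization of a circular fence in which the bottom row of the ribbon is flush left, i.e.\ $\mu_\ell=0$; this is precisely the shape for which shifting the last row by $d=\lambda_1-1$ wraps it into exactly one new covering relation with the first row (and if $\mu_\ell>0$ the wrapping adds nothing and the argument below forces all $k_i=0$).

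The crux --- and the step I expect to require the most care --- is the determination of the surviving tuples. Using the convention (as in the proof of Proposition~\ref{prop:skew_coeff_1} and the remark after Theorem~\ref{thm:GesselKrattCylPP}) that an entry is treated as $0$ exactly when its lower index is negative, observe that if a tuple $(k_1,\dots,k_\ell)$ gives a nonzero determinant then every row $i$ has a nonzero entry, so for some $j$ we have $(\ell+\lambda_1-1)\,k_i\le\lambda_i-\mu_j-i+j$. Since $\mu$ is a partition, $-\mu_j+j\le-\mu_\ell+\ell=\ell$, and $\lambda_i-i\le\lambda_1-1$ since $\lambda$ is a partition, so $(\ell+\lambda_1-1)\,k_i\le\lambda_1-1+\ell$, whence $k_i\le 1$; moreover equality forces $\lambda_i-i=\lambda_1-1$, i.e.\ $i=1$. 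Thus $k_i\le 1$ for all $i$, and $k_i=1$ only if $i=1$. Combining with $k_1+\cdots+k_\ell=0$: if $k_1\le 0$ then all $k_i\le 0$ and the tuple is $(0,\dots,0)$; if $k_1=1$ then $\sum_{i\ge 2}k_i=-1$ with each $k_i\le 0$, which forces exactly one index $m\ge 2$ with $k_m=-1$ and the rest equal to $0$. This is the claimed list.

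The remainder is bookkeeping: identify the zero-tuple term with $\Omega(P_{\lambda/\mu};t)$ as above, and for each $m$ carry out the substitution into \eqref{eq:gk} to recover the corresponding determinant in \eqref{eq:sum_dets_ribbons}. As sanity checks I would verify the degenerate case $\ell=1$ (the extra sum is empty, the circular fence is a chain, and the formula reduces to $\Omega(P_{\lambda/\mu};t)$), and test a small instance such as $\lambda/\mu/d=(2,2)/(1)/1$, where the single $m=2$ determinant should equal $\Omega(P_{(2,2)/(1)/1};t)-\Omega(P_{(2,2)/(1)};t)$.
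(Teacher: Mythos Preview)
Your proposal is correct and follows essentially the same approach as the paper: both specialize the Gessel--Krattenthaler sum to $d=\lambda_1-1$, use the ``each row must have a nonzero entry'' criterion to bound the $k_i$, and conclude that the only surviving tuples are $(0,\dots,0)$ and $(\delta_{i,1}-\delta_{i,m})_{i}$ for $m=2,\dots,\ell$. Your inequalities $\lambda_i-i\le\lambda_1-1$ and $j-\mu_j\le\ell-\mu_\ell=\ell$ are a slightly cleaner packaging of the paper's bounds $\lambda_i-\mu_j\le\lambda_1$ and $j-i\le\ell-1$, but the argument is the same; note that the upper limit of the sum should indeed be $\ell$, as you have it, rather than the $n$ printed in the statement.
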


\begin{proof}
Since $\lambda/\mu/(\lambda_1-1)$ is a closed ribbon then  $\mu_{\ell}=0$. A term in the RHS of \eqref{eq:gk} has a  nonvanishing determinant if and only if for every $i=1,\ldots,\ell$, we have that $\lambda_i-\mu_j-i+j-(\ell+\lambda_1-1)k_i \geq 0$ for some $j$. Thus, either $k_i=0$ for all $i$ in which case we obtain the Kreweras determinant formula \eqref{eq:krew det} for a ribbon shape, or else $k_i>0$ for some $i$. In this case, we must have a $j$ such that
\[
\lambda_i - \mu_j -i +j \geq (\ell+\lambda_1-1)k_i.
\]
However, the conditions $\ell-1 \geq j-i$ and $\lambda_1 \geq \lambda_i-\mu_j$ force that $k_i=1$ and that $\ell-1=j-i$. This implies that $i=1$ and $j=\ell$. This in turn forces $k_m=-1$ for some $m=2,\ldots,\ell$. This gives the remaining summands on the RHS of \eqref{eq:sum_dets_ribbons} yielding the desired identity for $\Omega(P_{\lambda/\mu/\lambda_1-1};t)$. 
\end{proof}


\begin{theorem}\label{thm:circular-fences-order-positive}
    Circular fence posets  have order polynomials with nonnegative coefficients.
\end{theorem}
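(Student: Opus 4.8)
The plan is to deduce the theorem from the Meta Theorem (Theorem~\ref{thm:meta_positivity}), which reduces the claim to the nonnegativity of the single coefficient $c_1$, and then to compute that coefficient using the compact determinantal expression of Proposition~\ref{prop:GK for border strips}.

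First I would fix the family $\mathcal{F}$ consisting of all circular fence posets together with all finite disjoint unions of ordinary fence posets (including the empty poset and one-element posets). The key structural fact is that a proper nonempty order ideal $I$ of a circular fence $C$ must omit at least one maximal element (a ``peak''), because $C$ is the down-closure of its peaks; deleting a peak from the Hasse diagram of $C$ (which is a cycle as a graph) leaves a union of paths, and since in a (circular) fence two elements are comparable exactly when they are adjacent in that graph, the induced subposet on $I$ is a disjoint union of ordinary fences. Dually $C\setminus I$ omits a valley and is a disjoint union of fences, and ideals of disjoint unions of fences are again disjoint unions of fences; hence $\mathcal{F}$ is closed under taking lower and upper ideals. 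Moreover $c_1(P)\ge 0$ for every $P\in\mathcal{F}$ that is a disjoint union of fences: the order polynomial factors as the product of the order polynomials of the blocks, each having nonnegative coefficients by Corollary~\ref{coro:fences-order-positive}, so $c_1=0$ if there are two or more nonempty blocks and $c_1\ge 0$ if there is a single block. Thus by Theorem~\ref{thm:meta_positivity} it suffices to prove $c_1(P_{\lambda/\mu/\lambda_1-1})\ge 0$ for every circular fence poset.

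Next I would extract $c_1$ from \eqref{eq:sum_dets_ribbons}: writing $A^{(m)}$ for the $m$-th matrix appearing in that sum, we get $c_1\bigl(P_{\lambda/\mu/\lambda_1-1}\bigr)=c_1(P_{\lambda/\mu})+\sum_{m}[t^1]\det A^{(m)}$, and the first summand is nonnegative by Proposition~\ref{prop:skew_coeff_1}. For the determinants I would exploit their special shape, mimicking the proof of Proposition~\ref{prop:skew_coeff_1}. Since $\mu_\ell=0$ for a circular ribbon, the first row of $A^{(m)}$ has a single nonzero entry, equal to $1$, located in column $\ell$; every entry of the $m$-th row is divisible by $t$; and the remaining rows consist of Kreweras entries, divisible by $t$ in columns $j\ge i$ and with a positive constant term in columns $j<i$ (whenever the relevant binomial does not vanish). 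Expanding $\det A^{(m)}=\sum_{w\in\mathfrak{S}_\ell}\operatorname{sign}(w)\prod_i A^{(m)}_{i,w(i)}$, a term contributes to the linear coefficient only if $w(1)=\ell$, if $w(i)<i$ for all $i\ne 1,m$ (so that these factors contribute their constant terms), and if $A^{(m)}_{m,w(m)}$ contributes its linear term. Solving these constraints forces $w(i)=i-1$ for $2\le i\le m-1$ and reduces the remaining freedom to a bijection between the positions $m,\dots,\ell$ and the values $m-1,\dots,\ell-1$ that is ``decreasing-constrained'' except at the slot of position $m$; one then collects $\operatorname{sign}(w)$ and the sign of $[t^1]A^{(m)}_{m,w(m)}$ (a ratio of factorials of the form $(-1)^{\ell+j-m}\tfrac{(\ell+j-m)!\,(\lambda_m-\mu_j+\lambda_1-2)!}{(\lambda_m-\mu_j-m+j+\ell+\lambda_1-1)!}$) and shows that, after the cancellations among these $\sim 2^{\ell-m}$ permutations, the total $c_1(P_{\lambda/\mu})+\sum_m[t^1]\det A^{(m)}$ is nonnegative — I expect it to collapse to a manifestly nonnegative closed form, in the spirit of the formula $\tfrac{(\lambda_1-\mu_\ell-1)!\,(\ell-1)!}{(\lambda_1-\mu_\ell-1+\ell)!}$ of Proposition~\ref{prop:skew_coeff_1}.

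The main obstacle is exactly this last step. Nonnegativity of $\sum_m\det A^{(m)}$ at positive integers is automatic, since that sum counts the cylindric plane partitions of the ribbon shape that ``wrap around'' the cylinder; but turning this into termwise nonnegativity of $c_1$ requires controlling the heavy sign cancellation in the determinant expansions and the factorial ratios attached to the linear terms of the wrapped row. Everything else — verifying that $\mathcal{F}$ is closed under ideals, invoking Theorem~\ref{thm:meta_positivity}, and identifying the vanishing and $t$-divisibility patterns of the matrices $A^{(m)}$ — is routine given Proposition~\ref{prop:GK for border strips}, Proposition~\ref{prop:skew_coeff_1}, and Corollary~\ref{coro:fences-order-positive}.
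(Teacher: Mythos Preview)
Your overall plan matches the paper's proof: define a family containing circular fences and disjoint unions of ordinary fences, verify it is closed under ideals, and then compute $c_1$ via Proposition~\ref{prop:GK for border strips} so that Theorem~\ref{thm:meta_positivity} applies. Your structural verification of closure under ideals is in fact more carefully spelled out than what the paper writes.

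The gap is in your determinant analysis, and it is precisely the step you flag as the ``main obstacle''. You correctly note that for $i\neq 1,m$ the Kreweras entry $A^{(m)}_{i,j}$ has a constant term only when $j<i$, but you then forget to use that for a \emph{ribbon} one has $\mu_{i-1}=\lambda_i-1$, so $\lambda_i-\mu_j-i+j<0$ and hence $A^{(m)}_{i,j}=0$ whenever $j<i-1$. Thus for every $i\neq 1,m$ the only column with a nonzero constant term is $j=i-1$, and that constant term equals $\binom{t+\lambda_i-\mu_{i-1}-1}{\lambda_i-\mu_{i-1}-1}=\binom{t}{0}=1$. In other words, after deleting row~$1$ and column~$\ell$ (the first row having a single nonzero entry $A^{(m)}_{1,\ell}=1$), the remaining $(\ell-1)\times(\ell-1)$ block is \emph{triangular}: its ``diagonal'' entries are at $(i,i-1)$, equal to $1+O(t)$ for $i\neq m$, and every entry strictly above that diagonal is divisible by $t$. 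Consequently there is exactly one permutation contributing to the linear term of $\det A^{(m)}$, namely $w(1)=\ell$ and $w(i)=i-1$ for all $i\geq 2$, not the $\sim 2^{\ell-m}$ you anticipate, and there is no sign cancellation to control.

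Once you use this, the computation is immediate: the single surviving factor carrying a $t$ is
\[
A^{(m)}_{m,m-1}=\binom{t+\lambda_m-\mu_{m-1}+\lambda_1-2}{\lambda_m-\mu_{m-1}+\ell+\lambda_1-2}=\binom{t+\lambda_1-1}{\lambda_1-1+\ell},
\]
using $\lambda_m-\mu_{m-1}=1$, and its linear term is $(-1)^{\ell-1}\dfrac{(\lambda_1-1)!\,(\ell-1)!}{(\lambda_1-1+\ell)!}$; combined with $\operatorname{sign}(w)=(-1)^{\ell-1}$ this gives $[t^1]\det A^{(m)}=\dfrac{(\lambda_1-1)!\,(\ell-1)!}{(\lambda_1-1+\ell)!}$ for each $m=2,\ldots,\ell$. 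Together with $c_1(P_{\lambda/\mu})$ from Proposition~\ref{prop:skew_coeff_1} (same value, since $\mu_\ell=0$), one gets the clean closed form
\[
c_1\bigl(P_{\lambda/\mu/\lambda_1-1}\bigr)=\ell\cdot\frac{(\lambda_1-1)!\,(\ell-1)!}{(\lambda_1-1+\ell)!}>0,
\]
exactly as in the paper. So your ``expected collapse to a manifestly nonnegative closed form'' is correct, but it comes for free from the ribbon triangularity rather than from any delicate cancellation.
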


\begin{proof}
Let $P_{\lambda/\mu/\lambda_1-1}$ be a circular fence poset. We analyze the resulting determinant's linear term in Proposition~\ref{prop:GK for border strips} using the same approach as in Section~\ref{sec:pp}. Let $A(m)$ be the matrices appearing in the equation~\eqref{eq:sum_dets_ribbons}, so 
$$A(m)_{i,j} = \binom{t-1 + \lambda_i-\mu_j - (\lambda_1-1)(\delta_{i,1}-\delta_{i,m})}{\lambda_i-\mu_j -i +j - (\ell + \lambda_1-1)(\delta_{i,1}-\delta_{i,m})}$$ for $m\geq 2$. 

We have that $c_1(P_{\lambda/\mu})=
\frac{(\la_1-1)!  (\ell-1)!}{(\la_1-1+\ell)!}$ by Proposition~\ref{prop:skew_coeff_1}. We will also show that 
$$[t^1]\det A(m) = \frac{(\la_1-1)!  (\ell-1)!}{(\la_1-1+\ell)!}.$$ 

We recall some useful facts from the proof of Proposition~\ref{prop:GK for border strips}. First, we have that $\mu_{\ell}=0$ and that $\ell-1 \geq j-i$ and $\lambda_1\geq \lambda_i-\mu_j$. Thus, the only nonzero entry in the first row of the matrix $A(m)$ is $A(m)_{1,\ell}=\binom{t}{0}=1$.
Since $\lambda/\mu$ is a ribbon then we must have $\mu_i = \lambda_{i+1}-1$ for $i=1,\ldots,\ell(\lambda)-1$, so $\lambda_i-\mu_j-i+j = \lambda_i - i -(\lambda_{j+1}-j-1) \geq 0$ if and only if $j+1\geq i$ as $\lambda_i-i$ is a strictly decreasing sequence. By the same consideration, we have that for $i \neq 1,m$, the term $\binom{t-1+\lambda_i-\mu_j}{\lambda_i-\mu_j-i+j}$ for $i>j$ has a constant term and for $i\leq j$ is divisible by $t$. Therefore, the submatrix $A(m)|_{2:\ell,1:\ell-1}$ (rows $i=2,\ldots,\ell$ and columns $j=1,\ldots,\ell-1$) has entries equal to 0 below the diagonal on the rows where $i\neq m$. The  diagonal terms for $i \neq m$ are  $\binom{t+\lambda_i-\mu_j-1}{\lambda_i-\mu_j-1}=1 + O(t)$. On row $i=m$ the term on the diagonal is  
\[
A(m)_{m,m-1} = \binom{t+\lambda_m-\mu_{m-1}+\lambda_1-2}{\lambda_m-\mu_{m-1}+\ell+\lambda_1-2} = \binom{t+\lambda_1-1}{\lambda_1-1+\ell}= (-1)^{\ell-1}\frac{(\la_1-1)!  (\ell-1)!}{(\la_1-1+\ell)!}t + O(t^2),
\] 
where we used the fact that 
$\lambda_i-\mu_{i-1}=1$ for $i=2,\ldots,\ell$ since $\lambda/\mu$ is a ribbon. The off-diagonal terms below the diagonal are 
$$A(m)_{m,j}=\binom{ t-1+\lambda_m -\mu_j+\lambda_1-1}{\lambda_m-\mu_j-m+j+\ell+\lambda_1-1}=O(t)$$
since $\lambda_m-\mu_j+\lambda_1-2 <\lambda_m-\mu_j+j-m+\ell+\lambda_1-1$.
Computing the determinant of $A(m)|_{2:\ell,1:\ell-1}$ by expanding the minors along the row $i=m$, the only term which is not divisble by $t^2$ would come from the product of its diagonal entries, which is $A(m)_{m,m-1}$
Thus,

\[\det A(m) = (-1)^{\ell-1} A(m)_{m,m-1} +  O(t^2),\]
which has the desired linear term. 
Then 
\[c_1(P_{\lambda/\mu/\lambda_1-1}) = \ell \frac{(\la_1-1)!  (\ell-1)!}{(\la_1-1+\ell)!},\]
and applying Theorem~\ref{thm:meta_positivity} on the family of circular fence posets and fence posets we obtain the claim.
\end{proof}

\begin{example} \label{ex:cyclic zig-zag}
For an even positive integer $n$, let $\widehat{Z}_{n}$ be the circular zig-zag with $n$ elements, as depicted in Figure~\ref{fig:closed zig-zag}. 
For small values of $n$, the order polynomial takes the following values:
\begin{equation} \label{eq:ex order poly closed zig-zags}
     n!\cdot \Omega(\widehat{Z}_n;t) = \begin{cases}
        t^2 + t & n = 2,\\
         4t^4 + 8t^3 + 8t^2 + 4t & n = 4,\\
        48t^6 + 144t^5 + 210t^4 + 180t^3 + 102t^2 + 36t & n = 6,\\
        \text{etc.} & 
    \end{cases}
    \end{equation}
The coefficients on the right-hand-side sum to $(2n)!$ and the leading term counts the number of linear extensions of $\widehat{Z}_{2n}$ which is given by $n\cdot E_{2n-1}$ \cite[{\href{https://oeis.org/A024255}{A024255}}]{OEIS}. 
Now we can deduce that the order polynomial of the circular zig-zag has nonnegative coefficients as a special case of Theorem~\ref{thm:circular-fences-order-positive} since $\widehat{Z}_n$ is a circular fence poset. Various recurrences for $\Omega(\widehat{Z}_n;t)$ and a different proof of the positivity of the linear term of $\Omega(\widehat{Z}_n;t)$ not requiring the determinantal formula by Gessel--Krattenthaler appears in \cite{lundstrom-saud}.
\end{example}

Based on calculations using Theorem~\ref{thm:GesselKrattCylPP}, it appears that the order polynomial of  a cylindric skew shape has nonnegative coefficients.

\begin{conj}\label{conj:cylindric}
    The order polynomial of any cylindric  skew shapes has  nonnegative coefficients, i.e.
    $|\la/\mu|! \cdot \Omega(P_{\la/\mu/d};t) \in \mathbb{Z}_{\geq 0}[t].$
\end{conj}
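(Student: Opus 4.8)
The plan is to run the Meta Theorem (Theorem~\ref{thm:meta_positivity}) on the family $\mathcal{F}$ of cell posets of cylindric skew shapes, which is closed under taking lower and upper order ideals as noted in Section~\ref{sec:other}. Concretely, the covering relations $(i,j)\succeq(i,j+1)$ force a lower ideal $I$ of $P_{\la/\mu/d}$ to be a final segment $\{(i,j):j>\nu_i\}$ in each row, so $I$ is the cell set $X(\la/\nu)$ for some sequence $\nu$ with $\mu\subseteq\nu\subseteq\la$; that $I$ is also closed under the remaining row-to-row and wrap-around relations is exactly the statement that $\la/\nu$ inherits the cylindric inequalities~\eqref{eq: cylindric conditions lambda mu}, and dually $P_{\la/\mu/d}\setminus I$ is the cell poset of $\nu/\mu$, again a cylindric skew shape (ordinary skew shapes arising in the degenerate range $d\gg 0$). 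Granting this, Theorem~\ref{thm:meta_positivity} reduces the entire conjecture to the single inequality $c_1(P_{\la/\mu/d}) := [t^1]\,\Omega(P_{\la/\mu/d};t)\geq 0$ for every cylindric skew shape, and simultaneously delivers the integrality statement $|\la/\mu|!\cdot\Omega(P_{\la/\mu/d};t)\in\mathbb{Z}_{\geq 0}[t]$.

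To get a handle on $c_1$ I would feed the Gessel--Krattenthaler formula~\eqref{eq:gk} through the same permutation-expansion analysis used to prove Proposition~\ref{prop:skew_coeff_1} and Theorem~\ref{thm:circular-fences-order-positive}. Fix a tuple $(k_1,\dots,k_\ell)$ with $\sum_i k_i=0$ and write the $(i,j)$ entry of the associated matrix as $\binom{t-1+a_{ij}}{b_{ij}}$ with $a_{ij}=\la_i-\mu_j-dk_i$ and $b_{ij}=\la_i-\mu_j-i+j-(\ell+d)k_i$. Such an entry is $0$ unless $b_{ij}\geq 0$, it is divisible by $t$ exactly when $1\leq a_{ij}\leq b_{ij}$ (and then its linear coefficient is the explicit signed factorial ratio $(-1)^{b_{ij}-a_{ij}}(a_{ij}-1)!\,(b_{ij}-a_{ij})!/b_{ij}!$), and otherwise its constant term $\binom{a_{ij}-1}{b_{ij}}$ is nonzero. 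Expanding each determinant over $\mathfrak{S}_{\ell}$, only the diagonals $\{(i,w(i))\}_i$ carrying exactly one $t$-divisible entry (all other entries with nonzero constant term) contribute to $c_1$; as in the ribbon case, one classifies combinatorially which tuples admit such a diagonal and which $w$ realize it, thereby expressing $c_1(P_{\la/\mu/d})$ as an explicit finite signed sum of products of factorial ratios. This genuinely generalizes what is already in the paper: the tuple $k\equiv 0$ reproduces the nonnegative skew value $\tfrac{(\la_1-\mu_\ell-1)!(\ell-1)!}{(\la_1-\mu_\ell-1+\ell)!}$ of Proposition~\ref{prop:skew_coeff_1}, and when $\la/\mu$ is a ribbon only the tuples with one entry $+1$ and one entry $-1$ survive, each contributing that same value with matching sign --- which is precisely how the linear term was computed in Theorem~\ref{thm:circular-fences-order-positive}.

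The hard part will be the last step: proving that this signed sum over all admissible $(k_1,\dots,k_\ell)$ (and over the contributing permutations inside each term) is nonnegative. In the ribbon specialization the surviving terms were few and arrived with a single positive sign, so there was no cancellation to understand, whereas for a general cylindric shape the individual contributions need not be nonnegative and the combinatorial classification above is likely to be far more delicate. I see three routes worth attempting: (i) partition the admissible tuples into blocks --- most naturally indexed by which row is ``pushed up'' and which is ``pulled down'', generalizing the index $m=2,\dots,\ell$ in Proposition~\ref{prop:GK for border strips} --- and show that each block sums to a nonnegative quantity; (ii) build a sign-reversing involution on the underlying data (a tuple, a permutation, and the position of the $t$-divisible entry) that cancels the negative contributions; or (iii) bypass the determinant altogether by establishing a manifestly positive ``reduced-word''-type expansion of $\Omega(P_{\la/\mu/d};t)$ in the spirit of the Fomin--Kirillov/Macdonald identity of Section~\ref{sec:schubert} and Conjecture~\ref{conj:expansion}, using the connection between cylindric plane partitions and affine Schubert calculus, which would yield positivity of \emph{every} coefficient at once. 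Routes (i)--(ii) stay closest to the machinery already developed here and are where I would start, checking any candidate formula for $c_1$ against the circular zig-zag data in~\eqref{eq:ex order poly closed zig-zags} and the recurrences of~\cite{lundstrom-saud}; route (iii) is the most conceptual but by far the least developed.
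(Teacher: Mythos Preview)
The statement you are addressing is listed in the paper as Conjecture~\ref{conj:cylindric}; it is \emph{not} proved there, so there is no argument to compare your proposal against. What the paper does is exactly the scaffolding you describe: it observes that cell posets of cylindric skew shapes form a family closed under taking order ideals and co-ideals (so Theorem~\ref{thm:meta_positivity} applies), reduces the conjecture to the single inequality $c_1(P_{\la/\mu/d})\geq 0$, and then stops. Your write-up is therefore not a proof but a faithful research plan, and you are candid about this (``the hard part will be the last step'').

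The genuine gap is precisely the one you flag. In the circular-fence case the Gessel--Krattenthaler sum~\eqref{eq:gk} collapses (Proposition~\ref{prop:GK for border strips}) to the $k\equiv 0$ term plus $\ell-1$ further terms indexed by $m=2,\ldots,\ell$, and the proof of Theorem~\ref{thm:circular-fences-order-positive} shows that each of those $\ell$ determinants contributes linear term exactly $\tfrac{(\la_1-1)!(\ell-1)!}{(\la_1+\ell-1)!}$; the total $c_1$ is thus $\ell$ copies of a single positive number with no cancellation whatsoever. For a general cylindric shape neither feature survives: many more tuples $(k_1,\ldots,k_\ell)$ contribute, within each determinant several permutations can hit a single $t$-divisible entry, and the individual signed factorial ratios you write down need not share a sign. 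None of your routes (i)--(iii) is currently known to work, and the paper offers no further hint beyond computational evidence. So your proposal correctly identifies the target and the available machinery, but the conjecture remains open.
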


\subsection{Shifted plane partitions}
Another well-known generalization of plane partitions of skew shape is plane partitions of shifted skew shape. A partition $\lambda$ with distinct parts $\lambda_1>\lambda_2>\cdots >\lambda_{\ell}$ is called {\em strict}. Given a strict partition $\lambda$, its {\em shifted Young diagram} is a Young diagram of $\lambda$ where we indent each row one position to the right of the row above it. Given two strict partitions $\lambda$ and $\mu$ such that the shifted Young diagram of $\mu$ is contained in the shifted Young diagram of $\lambda$, a {\em shifted skew shape} $\lambda/\mu$ is the set difference of shifted Young diagrams. A {\em shifted plane partition} of shifted skew shape $\lambda/\mu$ is a filling of the shifted Young diagram of $\lambda/\mu$ with nonnegative integers that is weakly decreasing along rows and columns. See Figure~\ref{fig:ex shifted plane partition}. Shifted plane partitions are related to symmetry classes of plane partitions \cite{StanleyDozen} and Schur $P$-functions and Schur $Q$-functions \cite{StembridgeProjRepr,HoffmanHumphreys,HAMEL1996328}. 

For $t$ in $\mathbb{Z}_{\geq 0}$, let $\ShPP_{\lambda/\mu}(t)$ be the number of shifted plane partitions of shifed skew shape $\lambda/\mu$ with entries $0,\ldots,t$. Let $Q_{\lambda/\mu}$ be the underlying cell poset of a shifted skew shape $\lambda/\mu$. Note that $\Omega(Q_{\lambda/\mu};t)=\ShPP_{\lambda/\mu}(t-1)$.

\begin{figure}[ht]
    \includegraphics[scale=0.9]{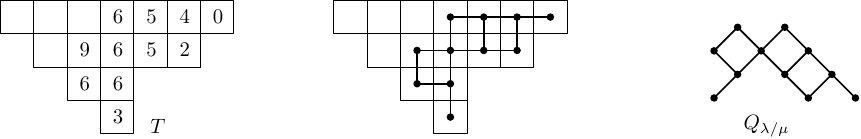}
    \caption{An illustration of a shifted plane partition of shape $\lambda/\mu=7521/31$ and the Hasse diagram of the associated poset $Q_{\lambda/\mu}$.}\label{fig:ex shifted plane partition}
\end{figure}

Based on calculations using {\tt SageMath} \cite{sagemath} for all shifted skew shapes of size $\leq 15$, it appears that the order polynomial of a shifted skew shape has nonnegative coefficients\footnote{In forthcoming work \cite{MMY}, Marberg, the second named, and Yu settled the case of shifted straight shapes using Schubert polynomials.}.

\begin{conj}\label{conj:shifted}
    The order polynomial of shifted skew shapes $\lambda/\mu$ has nonnegative coefficients, i.e., 
    $|\la/\mu|! \cdot \Omega(Q_{\la/\mu};t) \in \mathbb{Z}_{\geq 0}[t].$
\end{conj}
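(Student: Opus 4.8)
The plan is to apply the Meta Theorem (Theorem~\ref{thm:meta_positivity}) to the family $\mathcal{F}$ of all cell posets $Q_{\lambda/\mu}$ of shifted skew shapes. First I would check that $\mathcal{F}$ is closed under taking lower and upper order ideals: as in Section~\ref{sec:pp}, a lower ideal $I$ of $Q_{\lambda/\mu}$ is obtained by choosing, in each row of the shifted diagram, how many of the leftmost boxes to keep, subject to the constraint that the ``staircase'' of chosen boxes stays a shifted skew shape; the resulting $I$ is again of the form $Q_{\nu/\mu}$ and the complement is $Q_{\lambda/\nu}$, both shifted skew shapes. Hence the stability hypothesis of Theorem~\ref{thm:meta_positivity} holds, and everything reduces to proving that $c_1(Q_{\lambda/\mu}) = [t^1]\,\Omega(Q_{\lambda/\mu};t) \geq 0$ for every shifted skew shape.

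To get at the linear coefficient I would use a Kreweras-type determinantal formula for $\ShPP_{\lambda/\mu}(t)$. For shifted shapes such formulas are classical (Gansner, and in the Schur $P$/$Q$ and symmetry-classes literature of \cite{StanleyDozen,StembridgeProjRepr,HoffmanHumphreys,HAMEL1996328}): the number of shifted plane partitions with bounded entries can be written as a sum of Pfaffians, or equivalently as a determinant of binomial-coefficient entries indexed by the rows of the shifted diagram, analogous to Proposition~\ref{Kreweras_form} but with an extra ``reflection'' contribution accounting for the diagonal boundary. Once such a formula is in hand, I would expand the determinant (or Pfaffian) over the symmetric group and track which terms are divisible by $t$ but not $t^2$, exactly as in the proof of Proposition~\ref{prop:skew_coeff_1}: each factor $\binom{t-1+a}{b}$ is divisible by $t$ precisely when the relevant content parameter forces a factor $t$ into the rising-factorial numerator, so a linear term survives only from the unique permutation whose non-$t$-divisible slot sits in a forced position. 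This should yield a closed product formula for $c_1(Q_{\lambda/\mu})$ — presumably a ratio of factorials reminiscent of the hook-length/shifted-hook-length formula — which is manifestly nonnegative.

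I expect the main obstacle to be \emph{isolating and verifying the correct determinantal or Pfaffian identity} for bounded shifted plane partitions and then carrying out the combinatorial bookkeeping of the linear term through the Pfaffian expansion, which is genuinely more delicate than the determinant case: Pfaffian expansions index over perfect matchings rather than permutations, the diagonal of the shifted shape contributes ``half-boxes'' that change the shape of the leading term, and one must rule out cancellation among several surviving linear contributions. A secondary subtlety is confirming the ideal-closure claim at the level of shifted diagrams, where the indentation of successive rows makes the description of order ideals slightly less transparent than in the ordinary skew case; but this should be a routine check. If a clean single-determinant formula is unavailable, a fallback is to prove nonnegativity of $c_1$ directly: $c_1(Q_{\lambda/\mu})$ equals (up to a positive normalization) an alternating sum that one can interpret, via inclusion–exclusion on the bounded-entry condition, as counting certain restricted shifted plane partitions or equivalently lattice paths, forcing nonnegativity combinatorially without an explicit product formula.
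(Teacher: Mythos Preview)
This statement is a \emph{conjecture} in the paper (Conjecture~\ref{conj:shifted}); the authors do not prove it, so there is no proof to compare your proposal against. They support it only with exhaustive computation for $|\lambda/\mu|\le 15$.

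Your overall strategy is exactly the one the paper implicitly recommends: the family of shifted skew cell posets is closed under lower and upper ideals (one checks that the intermediate shape $\nu$ must be strict, using that $\lambda$ and $\mu$ are strict; this is the ``routine check'' you flag, and it does go through), so by Theorem~\ref{thm:meta_positivity} the whole conjecture reduces to $c_1(Q_{\lambda/\mu})\ge 0$.

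The genuine gap is precisely where you place your ``main obstacle,'' but you understate it. You write that ``such formulas are classical'' for bounded shifted plane partitions. In fact the paper points out that a Pfaffian formula is known only for shifted \emph{straight} shapes (Hopkins--Lai, via Stembridge), and explicitly says that ``a Pfaffian formula for plane partitions of shifted skew shapes would make it easier to analyze the linear term of $\Omega(Q_{\lambda/\mu};t)$.'' In other words, the determinantal/Pfaffian identity you propose to invoke for the skew case is not available in the literature the authors are aware of, and obtaining one is itself an open problem. Until that is resolved, the permutation/matching-expansion argument you sketch cannot be carried out. Your fallback (a direct combinatorial nonnegativity argument for $c_1$ via inclusion--exclusion) is not fleshed out enough to assess; note that the analogous step in the ordinary skew case relied essentially on Kreweras' determinant, not on a sign-free interpretation.

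So: sound reduction, but the heart of the argument rests on an identity that is not known to exist. That is exactly why the statement remains a conjecture.
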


For shifted straight shapes, Hopkins and Lai \cite[Thm. 2.2]{HopkinsLai} used the theory of  Stembridge from \cite{StembridgePfaff}, building on work of Okada in \cite{OkadaPP}, to give a {\em Pfaffian} formula for $\ShPP_{\lambda}(t)$. This formula can be viewed as an analogue of MacMahon's determinant formula for $\PP_{\lambda}(t)$ (Theorem~\ref{Kreweras_form}). A Pfaffian formula for plane partitions of shifted skew shapes would make it easier to analyze the  linear term of $\Omega(Q_{\la/\mu};t)$.

\section{Final remarks}

\subsection{\texorpdfstring{$P$}{P}-Eulerian polynomials of skew shapes}

The numerator of the generating function of $\ehr(\mathscr{O}(P);t)$, commonly known as the $h^*$-polynomial of $\mathscr{O}(P)$, has nonnegative integer coefficients. This polynomial, denoted $h^*(P_{\lambda/\mu};z)$ is also referred to as the $P_{\lambda/\mu}$-Eulerian polynomial.

We do not know if there is a nice interpretation for the coefficients of $h^*(P_{\lambda/\mu};z)$. However, based on computer experiments we raise the following question.

\begin{Q}
    Is $h^*(P_{\lambda/\mu}; z)$ real-rooted for every skew shape $\lambda/\mu$?
\end{Q}

We note that, in the hierarchy of conjectures mentioned in \cite{ferroni-higashitani}, we \emph{expect} that $h^*(P_{\lambda/\mu};z)$ has log-concave coefficients. The reason is that $\mathscr{O}(P)$ possesses quadratic triangulations and hence has the integer decomposition property: no examples of IDP polytopes with not log-concave $h^*$-vectors are known. On the other hand, the log-concavity for $h^*(P_{\lambda/\mu}; z)$ would be a special case of the log-concavity version of the Neggers--Stanley conjecture due to Brenti \cite{BrentiNS}, see also \cite{xifan-yu} and \cite{alexandersson-jal}. When $\mu=0$ and $\lambda$ is a hook, the real-rootedness follows from \cite[Corollary~4.1]{ferroni_hooks}.

\subsection{Geometric proof of positivity for skew shapes}

In this paper we have established the positivity of the order polynomial of cell posets of skew shapes by a purely enumerative argument. However, the following remains as an open problem.

\begin{problem}
    Find a geometric proof of the nonnegativity of the coefficients of $\Omega(P_{\lambda/\mu};t)$.
\end{problem}

In particular, we raise the challenge of finding a half-open decomposition that explains the nonnegativity of the coefficients of $\Omega(P_{\lambda/\mu};t)$, perhaps also settling Conjecture~\ref{conj:expansion}, noting that half open products of simplices have Ehrhart polynomials that look like each summand in that conjecture.



\subsection{Real rootedness and interpretation of the coefficients of \texorpdfstring{$\Omega(P_{\lambda/\mu};t)$}{Omega(P_lambda/mu)(t)}}\label{ss:putnam} 

Problem B5 from the 2024 William Lowell Putnam Mathematical competition \cite{PutnamArchive}, inspired by the result in \cite{ferroni_hooks}, was to show that order polynomial of the poset corresponding to the straight shape of a hook $\lambda=(a+1,1^b)$ is a polynomial in $t$ with nonnegative coefficients. The solutions \cite{putnam} by the Putnam committee, who one of the members was the third named author, consist, in essence, of finding ad-hoc recursions which allow one to write explicit formulas for the polynomial showing positivity. The second anonymous solution of this problem in \cite{AltSolutionsPutnam2024} also shows that $\Omega(P_{(a+1,1^b)};t)$ is real-rooted. However, this does not hold for other (skew) shapes. For example, for $\lambda=332$, we have 
that 
\[
\Omega(P_{332};t) = 
\tfrac{1}{960}t^8 + \tfrac{1}{80}t^7 + \tfrac{91}{1440}t^6 + \tfrac{7}{40}t^5 + \tfrac{827}{2880} t^4 + \tfrac{67}{240} t^3 + \tfrac{107}{720} t^2 + \tfrac{1}{30}t,
\]
which is not real rooted.

In January 2025, Sam Hopkins asked in MathOverflow~\cite{hopkins} if there are generalizations to this result and interpretations for the coefficients. The present paper shows that positivity holds for all skew shapes. Explicit formulas for the coefficients in the straight shape case arise from Macdonald's identity for Schubert polynomials as explained in Section~\ref{sec:schubert}. In general, however, we lack explicit formulas and combinatorial interpretations for the coefficients \footnote{In \cite{Kahane}, Yakob Kahane  found a combinatorial interpretation for the case of (circular) fence posets and a conjectured interpretation for general skew shapes.}

\subsection{On  positivity of plane partitions of stretched skew shape}

Combining \eqref{eq:order-ehrhart} and \eqref{eq:omega of lam/mu is PP}, we know that the number $\PP_{\lambda/\mu}(t)$ for $t\in \mathbb{N}$ counts the number of lattice points of the dilated order polytope $t\cdot \mathscr{O}(P_{\lambda/\mu})$. There is another polytope whose lattice points are counted by $\PP_{\lambda/\mu}(t)$, the {\em generalized Pitman--Stanley polytope} defined in \cite[Section 5]{Pitman_Stanley_1999} and further studied in \cite{gen_PS1,gen_PS2} by Dugan, Hegarty, Morales and Raymond. This polytope is defined as follows. If we fix $n,t\in \mathbb{N}$ and given ${\bf a}=(a_1,\ldots,a_n), {\bf b}=(b_1,\ldots,b_n) \in \mathbb{Z}_{\geq 0}^n$, let 
\[
\mathscr{PS}_n^t({\bf a},{\bf b}) = \{ {\bf x} \in \mathbb{R}^{n\times t}_{\geq 0} \mid {\bf a} \trianglerighteq {\bf x_{\cdot 1}}\trianglerighteq {\bf x_{\cdot 1}} \trianglerighteq  \cdots \trianglerighteq  {\bf x_{\cdot t}} \trianglerighteq {\bf b} \},
\]
where $\trianglerighteq$ denotes {\em dominance order}, i.e. $(c_1,\ldots, c_n) \trianglerighteq (d_1,\ldots, d_n)$ if $\sum_{i=1}^j c_i \geq \sum_{i=1}^j d_i$ for $j=1,\ldots,n$, and ${\bf x_{\cdot j}}$ denotes the $j$th column of the matrix ${\bf x}$. When $t=1$, the polytope $\mathscr{PS}_n^t({\bf a},{\bf b})$ is the classical Pitman--Stanley polytope from \cite{Pitman_Stanley_1999}. The lattice points of $\mathscr{PS}_n^t({\bf a},{\bf b})$ are in natural correspondence with plane partitions of shape $\lambda({\bf a})/\mu({\bf b})=(a_1+\cdots +a_n,\cdots,a_1+a_2,a_1)/(b_1+\cdots +b_n,\cdots,b_1+b_2,b_1)$ with entries in $0,1,\ldots,t$ (see also \cite[Theorem 3.7]{gen_PS1}).  In this setting, dilating the polytope by $k \in \mathbb{N}$, corresponds to stretching the parts of $\lambda$ and $\mu$ by $k$ which we denote by $k\lambda$ and $k\mu$, respectively. Pitman and Stanley showed in \cite[Corollary 10]{Pitman_Stanley_1999} that for $t=1$ and ${\bf b}={\bf 0}$, the polytope $\mathscr{PS}_n^1({\bf a},{\bf 0})$ is Ehrhart positive. In in \cite{gen_PS2}, this positivity is conjectured to hold for $\mathscr{PS}_n^t({\bf a},{\bf b})$ as well.\footnote{After this paper was finished, this conjecture was settled by Jochemko and Menon \cite[Thm. 3.2]{JochemkoMenon}} 

\begin{conj}[\cite{gen_PS2}]
For fixed $t\in \mathbb{N}$, the  polynomial $\PP_{k\lambda/k\mu}(t)$ has nonnegative coefficients in $k$.    
\end{conj}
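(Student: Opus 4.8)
The plan is to reinterpret the statement geometrically. Write $\lambda=\lambda({\bf a})$, $\mu=\mu({\bf b})$ for the encoding vectors ${\bf a},{\bf b}\in\mathbb{Z}_{\geq 0}^{n}$ as above. Then the lattice points of the dilate $k\cdot\mathscr{PS}_n^t({\bf a},{\bf b})$ correspond to plane partitions of shape $k\lambda/k\mu$ with entries in $\{0,\ldots,t\}$, so that $\PP_{k\lambda/k\mu}(t)=\ehr(\mathscr{PS}_n^t({\bf a},{\bf b}),k)$ and the conjecture becomes the Ehrhart positivity of the generalized Pitman--Stanley polytope. I would emphasize that this differs in nature from Theorem~\ref{thm:positivity_skew}: here $t$ is frozen and one dilates a single fixed polytope of dimension $nt$, rather than evaluating the order polynomial of an order polytope in its own variable. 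It is also clarifying to pass to the partial-sum coordinates $X^{(s)}_j:=x_{1s}+\cdots+x_{js}$, where $x_{is}$ is the $(i,s)$ entry of the array ${\bf x}\in\mathbb{R}^{n\times t}$ (so ${\bf x}_{\cdot s}=(x_{1s},\ldots,x_{ns})$), a unimodular change of variables under which $\mathscr{PS}_n^t({\bf a},{\bf b})$ becomes the polytope of arrays $(X^{(s)}_j)$, $1\leq s\leq t$, $1\leq j\leq n$, that are weakly decreasing in $s$, weakly increasing in $j$, and satisfy $B_j\leq X^{(s)}_j\leq A_j$ with $A_j=a_1+\cdots+a_j$ and $B_j=b_1+\cdots+b_j$; dilating by $k$ only rescales the column windows to $[kB_j,kA_j]$. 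So the conjecture equivalently asserts that the number of integer arrays $(X^{(s)}_j)$ weakly decreasing in $s$, weakly increasing in $j$, with $X^{(s)}_j\in[kB_j,kA_j]$ (equivalently, the order polynomial of the cell poset of the $t\times n$ rectangle read with column-dependent windows) is a polynomial in $k$ with nonnegative coefficients.

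The primary route I would pursue uses the Kreweras determinant of Proposition~\ref{Kreweras_form}, which here reads
\[
\PP_{k\lambda/k\mu}(t)=\det\left[\binom{k(\lambda_i-\mu_j)+t}{k(\lambda_i-\mu_j)-i+j}\right]_{i,j=1}^{\ell},
\]
regarded as a polynomial in $k$, together with the Lindström--Gessel--Viennot lemma. The latter presents the right-hand side as the number of families of $\ell$ pairwise nonintersecting lattice paths, with unit north and east steps, confined to a horizontal strip whose height is bounded independently of $k$, whose sources lie at the bottom with $x$-coordinates governed by ${\bf b}$ and whose sinks lie at the top with $x$-coordinates governed by ${\bf a}$; both sets of endpoints spread $\Theta(k)$ apart whenever the relevant parts of $\mu$, resp.\ $\lambda$, are distinct. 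A single path between a source and a sink lying $kN$ apart contributes the factor $\binom{kN+t}{t}=\tfrac{1}{t!}\prod_{r=1}^{t}(kN+r)$, which plainly has nonnegative coefficients in $k$. The task is then to replace the signed determinantal sum by a manifestly positive enumeration of nonintersecting families: one hopes to exhibit a sign-reversing involution on the ``intersecting'' permutation terms whose surviving contributions regroup into nonnegative polynomials in $k$, or else a transfer-matrix / peeling procedure that assembles a nonintersecting family one path at a time (equivalently, in the array picture, one column or one row at a time), recording each step by a nonnegative polynomial in $k$. The guiding intuition is that in a bounded-height strip essentially all of the dilation freedom is horizontal, so long, thin bundles of paths can be rerouted past one another.

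Two further inputs should assist. First, the identity $\PP_{k\lambda/k\mu}(t)=\sum_{k\mu\subseteq\theta\subseteq k\lambda}\PP_{\theta/k\mu}(t-1)$ (sum over all partitions $\theta$ with $k\mu\subseteq\theta\subseteq k\lambda$, obtained by recording the support of the entries $\geq 1$) supports an induction on $t$ whose inductive step is a summation of a polynomial, with a controlled sign pattern, over the lattice points of a region whose walls scale linearly in $k$; the delicate point is to find an inductive hypothesis on $\PP_{\theta/k\mu}(t-1)$, phrased in the variables $\theta_i-k\mu_i$ and $k$, strong enough to survive the summation. Second, in the geometric picture one should exploit that the normal fan of $\mathscr{PS}_n^t({\bf a},{\bf b})$ is independent of $({\bf a},{\bf b})$, hence the combinatorics of a prospective half-open polyhedral subdivision of $\mathscr{PS}_n^t({\bf a},{\bf b})$ into lattice-equivalent half-open products of dilated simplices — each piece contributing a product of binomials $\prod_j\binom{kN_j+d_j}{d_j}$ with nonnegative coefficients in $k$ — does not depend on $k$; producing such a subdivision adapted to the column structure of the $t\times n$ grid would both settle the conjecture and furnish the geometric mechanism behind Conjecture~\ref{conj:expansion} requested in the final remarks.

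The principal obstacle, on all these routes, is the regime of coincident parts: when several consecutive parts of $\lambda$ (or of $\mu$) agree, the corresponding sinks (resp.\ sources) lie $O(1)$ apart, the paths are forced into a tight parallel bundle, and the ``plenty of room'' heuristic collapses. This is exactly the rectangular-subshape regime in which the uniform-window statement is already non-elementary and relies on the MacMahon/Fomin--Kirillov machinery underlying Theorem~\ref{thm:positivity_skew}. Moreover, in contrast to the order-polynomial setting, there is no apparent analogue of the coproduct formula of Proposition~\ref{prop:omega_x_y} for the dilation variable $k$ — cutting a plane partition of shape $(k_1+k_2)\lambda/(k_1+k_2)\mu$ along a vertical line imposes a matching constraint at the seam and therefore does not factor — so the meta-strategy of Theorem~\ref{thm:meta_positivity} is not directly available. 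I expect the crux of a complete proof to be either the discovery of a substitute gluing identity reducing the statement to the single linear-in-$k$ coefficient (whose value would then be extracted by a determinant analysis in the style of Proposition~\ref{prop:skew_coeff_1}), or the explicit construction of the half-open subdivision, or positive lattice-path formula, that survives the coincident-parts degeneration.
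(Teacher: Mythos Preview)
The statement you are addressing is a \emph{conjecture} in the paper, attributed to \cite{gen_PS2}; the paper offers no proof, only the formulation and the remark that the $t=1$, ${\bf b}={\bf 0}$ case is due to Pitman and Stanley. So there is no ``paper's own proof'' to compare against.

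Your proposal is not a proof either, and to your credit you do not pretend otherwise: it is a research outline that correctly reformulates the conjecture as Ehrhart positivity of $\mathscr{PS}_n^t({\bf a},{\bf b})$, sketches several plausible attack routes (Kreweras determinant plus LGV, induction on $t$ via the support-of-large-entries decomposition, half-open subdivisions), and then honestly identifies why each route stalls. Your diagnosis of the main obstruction is accurate: the coproduct identity of Proposition~\ref{prop:omega_x_y} has no analogue in the dilation variable $k$, so Theorem~\ref{thm:meta_positivity} does not apply, and the coincident-parts regime defeats the ``plenty of horizontal room'' heuristic for nonintersecting paths. None of the three routes you describe is carried to a conclusion; phrases like ``one hopes to exhibit a sign-reversing involution'' and ``I expect the crux of a complete proof to be'' mark exactly where the argument is missing. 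As a survey of approaches and obstacles this is thoughtful, but it does not resolve the conjecture, and the paper does not either.
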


\begin{rem}
The polytope $\mathscr{PS}_n^t({\bf a},{\bf b})$ can be seen as a plane partition analogue of the {\em (skew) Gelfand--Tsetlin polytope} \cite{GT50} whose lattice points correspond to bounded semistandard Young tableaux of skew shape. In this case, the Ehrhart positivity for the straight shape case follows from the hook-content formula \cite[Cor.~7.21.4]{EC2}.
The Ehrhart positivity for the skew case is a conjecture of Alexandersson and  Alhajjar \cite[Conjecture 7]{AlexanderssonAlhajjar}\footnote{After this paper was finished, this conjecture was settled by  Jochemko and Menon \cite[Thm. 3.5]{JochemkoMenon}}.
\end{rem}

\subsection{Kahn--Saks monotonicity}
In~\cite[Exercise 3.163(b)]{ec1}, see also~\cite{chan2023effective}, Kahn--Saks conjectured that for every poset $P$ on $n$ elements, the map 
\[t \longmapsto \frac{\Omega(P;t)}{t^n}\]
is weakly decreasing when $t$ ranges over the positive integers. The nonnegativity of the coefficients of the order polynomial for cell posets $P_{\lambda/\mu}$ of skew shapes immediately gives that $\Omega(P_{\lambda/\mu};t)/t^n$ is a decreasing function of $t$ and hence Kahn--Saks monotonicity holds for these classes of posets.

\subsection{Linear terms and roots}

For a poset $P$ we have that 
\[c_1(P) = \frac{d \Omega(P;t)}{dt} \bigg{|}_{t=0} = \Omega'(P;0).\] 
We also have that $\Omega(P;0)=0$ and $\Omega(P;1)=1$. If $c_1(P)<0$ then $\Omega'(P;0)<0$, which says that $\Omega(P;t)$ is decreasing at $0$. Then, we must have that $\Omega(P;\varepsilon)<0$ for small positive values $\varepsilon \in (0,1)$. The mentioned condition $\Omega(P;1)>0$ says that $\Omega(P;t)$ must have an odd  number of roots $ \in (0,1)$ counting multiplicities. What are the possible number of roots of $\Omega(P;t)=0$ in the interval $t \in (0,1)$? 


\subsection{Non-examples}\label{ss:nonexamples}

As was mentioned in Example~\ref{ex:faulhaber}, one of the most basic counterexamples to Ehrhart positivity is the antichain on $n$ elements with a minimal (or maximal) element attached. For $n=4$, the resulting poset has an order polynomial with a negative coefficient. This implies that even simple operations like `adding a minimum' do not preserve the positivity of the order polynomial. Also, notice that this also says that the planarity of the Hasse diagram, or being a series-parallel poset, are not enough to guarantee positivity.

A natural family of posets that encompasses all cell posets of skew Young diagrams consists of all posets in which every element covers and is covered by at most $2$ elements. Unfortunately, that class also does not have a positive order polynomial, due to the example depicted in Figure~\ref{fig:2-covers}.

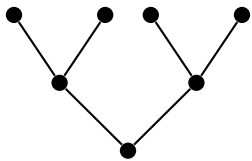
\begin{figure}[ht]
\begin{tikzpicture}
    [scale=0.6,auto=center,every node/.style={circle,scale=0.8, fill=black, inner sep=2.7pt}] 
    \tikzstyle{edges} = [thick];
    	
    \node[] (a) at (0,5) {};
    \node[]  (b) at (2,5) {};
    \node[] (e) at (1,3.5) {};
    \node[] (c) at (3,5) {};
    \node[] (d) at (5,5) {};
    \node[] (g) at (4,3.5) {};
    \node[] (h) at (2.5,2) {};
    
    \draw[edges] (a)--(e);
    \draw[edges] (b)--(e);
    \draw[edges] (c)--(g);
    \draw[edges] (d)--(g);
    \draw[edges] (e)--(h);
    \draw[edges] (g)--(h);
\end{tikzpicture}\caption{Every element covers and is covered by at most $2$ elements.}\label{fig:2-covers}
\end{figure}

A direct computation shows that 
\[\Omega(P,t) = \tfrac{1}{63} t^{7} + \tfrac{1}{9} t^{6} + \tfrac{53}{180} t^{5} + \tfrac{13}{36} t^{4} + \tfrac{7}{36} t^{3} + \tfrac{1}{36} t^{2} - \tfrac{1}{210} t,\]
which has a negative linear term. Moreover, this is the smallest example of this kind in which there is a negative coefficient in the order polynomial. This explains that even though $c_1(P)<0$, the other coefficients are indeed nonnegative.

\bibliographystyle{amsalpha}
\bibliography{bibliography_fences}

\end{document}